\newcommand{\mE}{\mathcal{E}}
\DeclareMathOperator{\mH}{\mathcal{H}}
\DeclareMathOperator{\supp}{supp}
\newtheoremstyle{normal}
{10pt}
{10pt}
{}
{}
{\bfseries}
{}
{0em}
{\bfseries{\thmname{#1}\thmnumber{ #2}\thmnote{ \hspace{0em}(#3)\newline}}}
\newtheoremstyle{standard}  
  {10pt}   
  {}   
  {\itshape}  
  {}       
  {\bfseries} 
  {:}         
  {0.2cm}  
  {}          
\newtheoremstyle{mittitel}  
  {10pt}   
  {}   
  {\itshape}  
  {}       
  {\bfseries} 
  {:}         
  {0.2cm}  
  {\bfseries{\thmname{#1}\thmnumber{ #2}\thmnote{ \hspace{0em}(#3)\newline}}}          
\title{Stochastic Wave Equations defined by Fractal Laplacians on Cantor-like Sets\vspace{-1ex} }
\author{Tim Ehnes\footnote{ Institute of Stochastics and Applications, University of Stuttgart, Pfaffenwaldring 57, 70569 Stuttgart, Germany, e-mail: tim.ehnes@mathematik.uni-stuttgart.de }}
\date{\vspace{-5ex}}
\begin{document}
\maketitle

\setlength{\parindent}{10pt}

\titlespacing{\section}{0pt}{20pt plus 4pt minus 2pt}{14pt plus 2pt minus 12pt}
\titlespacing{\subsection}{0pt}{20pt plus 4pt minus 2pt}{8pt plus 2pt minus 2pt}
\titlespacing{\subsubsection}{0pt}{12pt plus 4pt minus 2pt}{-6pt plus 2pt minus 2pt}

\theoremstyle{standard}

\newtheorem{thm}{Theorem}[section] 
\newtheorem{satz}[thm]{Proposition} 
\newtheorem{lem}[thm]{Lemma}
\newtheorem{kor}[thm]{Corollary} 
\newtheorem{defi}[thm]{Definition} 
\newtheorem{bem}[thm]{Remark}
\newtheorem{hyp}[thm]{Assumption}
\newtheorem{exa}[thm]{Example}

\begin{abstract}
 We study stochastic wave equations in the sense of Walsh defined by fractal Laplacians on Cantor-like sets. For this purpose, we give an improved estimate on the uniform norm of eigenfunctions and approximate the wave propagator using the resolvent density. Afterwards, we establish existence and uniqueness of mild solutions to stochastic wave equations provided some Lipschitz and linear growth conditions. We prove Hölder continuity in space and time and compute the Hölder exponents. Moreover, we are concerned with the phenomenon of weak intermittency. 
\end{abstract}

\section{Introduction}\label{section_1}

In this paper we study second-order hyperbolic stochastic partial differential equations defined by generalized second order differential operators.
To introduce the operator of interest, let $[a,b]\subset \mathbb{R}$ be a finite interval, $\mu$ a finite non-atomic Borel measure on $[a,b]$, $\mathcal{L}^2([a,b],\mu)$ the space of measurable functions $f$ such that $\int_a^b f^2d\mu<\infty$ and $L^2([a,b],\mu)$ the corresponding Hilbert space of equivalence classes with inner product $\langle f,g\rangle_{\mu}\coloneqq \int_a^b fgd\mu$. We define
\begin{align*}
\mathcal{D}_{\mu}^2\coloneqq \Big\lbrace f\in C^1((a,b))\cap C^0([a,b]): &\exists \left(f^{\prime}\right)^{\mu}\in\mathcal{L}_2([a,b],\mu): \\ &f^{\prime}(x)=f^{\prime}(a)+\int_a^x \left(f^{\prime}\right)^{\mu}(y)d\mu(y), ~~ x\in[a,b]\Big\rbrace.
\end{align*}
The Krein-Feller operator with respect to $\mu$ is given as 
\begin{align*}
\Delta_{\mu}: \mathcal{D}_{\mu}^2\subseteq L^2([a,b],\mu)\to L^2([a,b],\mu),~~ f\to\left(f^{\prime}\right)^{\mu}.
\end{align*}
This operator has been introduced, for example, in \cite{FAF,IKD,KS,KO,LG}, especially as the infinitesimal generator of a so-called Quasi diffusion. It is a measure-theoretic generalization of the classical second weak derivative $\Delta_{\lambda^1}$, where $\lambda^1$ is the one-dimensional Lebesgue measure. \\

We recall the well-known physical motivation for Krein-Feller operators (see \cite[Section 1.2]{AE}): We consider a flexible string of length $1$ clamped between two points $x=0$ and $x=1$ such that, if we deflect it, a tension force drives it back towards its state of
equilibrium.
The mass distribution of the bar shall have a density denoted by $\rho: [0,1]\to\mathbb{R}$. For reasons of simplicity, we assume that for the tangentially acting tension force $F$ it holds $F=1$. Then, the deviation of the string, the function $u(t,x)$, is determined by the wave equation
\begin{align}
\kappa \frac{\partial^2 u}{\partial x^2}(t,x)=c\rho(x)\frac{\partial u}{\partial t}(t,x)\label{wave_equation_density}
\end{align}
with Dirichlet boundary conditions $u(t,0)=u(t,1)=0$ for all $t\geq 0$. We impose Neumann boundary conditions
$\frac{\partial u}{\partial x}(t,0)=\frac{\partial u}{\partial x}(t,1)=0$ if the ends of the strings are attached to a pair of frictionless tracks which are free to move up and down..
In order to solve this wave equation, we use the separation of variables and write 
$u(t,x)=f(x)g(t)$, which yields
\begin{align*}
\kappa f^{\prime\prime}(x)g(t)=c\rho(x)f(x)g^{\prime\prime}(t)
\end{align*}
and by resorting
\begin{align*}
\frac{f^{\prime\prime}(x)}{\rho(x)f(x)}=\frac{c}{\kappa}\frac{g^{\prime\prime}(t)}{g(t)} 
\end{align*}
for all $t$ and $x$. Consequently, both sides of the equation are constant and we denote the value by $-\lambda$. We only consider the left-hand side, given by
\begin{align*}
f^{\prime\prime}(x)=-\lambda\rho(x)f(x).
\end{align*}
By integration with respect to the Lebesgue measure we get
\begin{align*}
f^{\prime}(x)-f^{\prime}(0)=-\lambda\int_0^x f(y)\rho(y)dy,
\end{align*}
which can be written as
\begin{align*}
f^{\prime}(x)-f^{\prime}(0)=-\lambda\int_0^x f(y)d\mu(y),
\end{align*}
where $\rho$ is the density of the measure $\mu$. By applying the definition of $\Delta_{\mu}$,
\begin{align*}
\Delta_{\mu} f = -\lambda f,
\end{align*}
which yields 
\begin{align} \label{wave_equation_intro}
\Delta_{\mu}u=\frac{\partial^2 u}{\partial t^2  },
\end{align}
as a generalization of wave equation \eqref{wave_equation_intro}, since this equation does not involve the density $\rho$. Consequently, we can use it to formulate the problem for measure which possess no density, in particular for fractal measures on $[0,1]$.

 We are interested in the case where $\mu$ is a self-similar measure on a Cantor-like set. More precisely,
let $N\geq 2$ and $\{S_1,...,S_N\}$  be a finite family of affine contractions on $[0,1]$, i.e.
\begin{align*}
S_i: [0,1]\to[0,1], ~ S_i(x)=r_ix+b_i, ~0<r_i<1, ~ 0\leq b_i\leq 1-r_i, ~ i=1,...,N,
\end{align*} 
where  $S_1(0)=0<S_1(1)\leq S_2(0)<S_2(1)\leq ... <S_N(1)=1$.
Further, let  $\mu_1,...,\mu_N$, i.e. $\mu_1,...,\mu_N\in(0,1)$ weights and $\sum_{i=1}^N\mu_i=1$. It is known from \cite{HF} that a unique non-empty compact set $F\subseteq [0,1]$ exists such that 
\begin{align}
F=\bigcup_{i=1}^M S_i(F)\label{definition_set}
\end{align}
and a unique Borel probabiliy measure $\mu$ such that 
\begin{align}
\mu(A)=\sum_{i=1}^N\mu_i\mu\left(S_i^{-1}(A)\right)\label{self_similarity}
\end{align}
for any Borel set $A\subseteq [0,1]$. Further, it holds $\supp\mu=F$. We call the set $F$ Cantor-like set. 
Wave equations where $\mu$ is defined by an IFS with overlaps and has full support were investigated in \cite{CNT}. 

\par 
By adding a random external force, more precicely, a space-thime white noise $\xi$ on $L^2([0,1],\mu)$, we are concerned with the hyerpbolic stochastic PDE
\begin{align}
\begin{split}
\frac{\partial^2}{\partial t^2}u(t,x)&=\Delta_{\mu}^b u(t,x)+f(t,u(t,x))\xi(t,x),\label{spde_wave_intro}\\
u(0,x)&=u_0(x),\\
\frac{\partial}{\partial t}u(0,x)&=u_1(x),
\end{split}
\end{align}
where $b\in\{N,D\}$ determines the boundary condition.
It is known (see \cite{WI}) that the stochastic wave equation defined by the classical one-dimensional weak Laplacian $\Delta_{\lambda^1}$ has a unique mild solution which is, some regularity conditions provided, essentially $\frac{1}{2}$-Hölder continuous in space and in time. Here, essentially $\alpha$-Hölder continuous means Hölder continuous for every exponent strictly less than $\alpha$. In two space dimensions it turns out that the mild solution is a distribution, no function (see \cite{WI}). Hambly and Yang \cite{HYC} addressed the questions regarding these properties in the setting of a p.c.f. self-similiar set (in the sense of \cite{KA}) with Hausdorff dimension between one and two. However, the damped wave equation in their paper is a system of first-order SPDEs. According to the knowledge of the author, there are no results regarding these properties in case of second-order Walsh SPDEs defined by a fractal Laplacian. The Krein-Feller operator can be interpreted as a fractal Laplacian on sets with dimension less or equal one.  \par 
We prepare the formulation of the main theorem by stating the following regularity conditions, where $\gamma$ is the spectral exponent of $\Delta_{\mu}^b$ and $\delta\coloneqq \max_{1\leq i\leq N}\frac{\log \mu_i}{\log\left((\mu_ir_i)^{\gamma}\right)}$ is an indicator for the skewness of $\mu$.

\begin{hyp}\label{hypo} 
\begin{enumerate}[label=(\roman*)]
\item \label{hypoi} $\delta+1<\frac{1}{\gamma}$
\item  $u_0\in\mathcal{D}\left(\Delta_{\mu}^b\right)$ 
, $u_1\in\mathcal{D}\left((-\Delta_{\mu}^b)^{\frac{1}{2}}\right)$  
\item \label{hypoiii} There exists $q\geq 2$ such that $f$ is predictable and satisfies the following Lipschitz and linear growth conditions: There exists $L>0$ and a real predictable process $M:\Omega\times[0,T]\to\mathbb{R}$ with $\sup_{s\in[0,T]}\lVert M(s)\rVert_{L^q(\Omega)}<\infty$ such that for all $(w,t,x,y)\in\Omega\times[0,T]\times\mathbb{R}$
\begin{align*}
|f(\omega,t,x)-f(\omega,t,y)|&\leq L|x-y|,\\ |f(\omega,t,x)|&\leq M(w,t)+L|x|.
\end{align*} 
\end{enumerate}
\end{hyp}

Note that Condition \ref{hypoi} is satisfied if $\mu$ is the $d_H$-dimensional Hausdorff meausre on $F$, where $d_H$ is the Hausdorff dimension of $F$, with the exception of $\lambda^1$ on $[0,1]$.\\

We formulate the main result of the present paper, where $d_H$ is the Hausdorff dimension of $F$ and $\nu_{\min}\coloneqq \min_{1\leq i\leq N}\frac{\mu_i}{r_i^{d_H}}$.

\begin{thm}\label{main_theorem}
Let $T\geq 0$ and assume Condition \ref{hypo} with $q\geq 2$. Then, there exists a unique mild solution $\{u(t,x): 0\leq t\leq T, ~0\leq x\leq 1\}$ to SPDE \eqref{spde_wave_intro}. Furthermore, there exists a version of this solution such that the following holds:
\begin{enumerate}[label=(\roman*)]
\item If $q>2$ and $t\in[0,T]$, $u(t,\cdot)$ is a.s. essentially $\frac{1}{2}-\frac{1}{q}$-Hölder continuous on $[0,1]$.
\item If $q>\left(d_H+1+\frac{\log(\nu_{\min})}{\log(r_{\max)}}\right)^{-1}$ and $x\in[0,1]$, $u(\cdot,x)$ is a.s. essentially $\frac{1}{d_H+1+\frac{\log(\nu_{\min})}{\log(r_{\max)}}}-\frac{1}{q}$-Hölder continuous on $[0,T]$.
\end{enumerate}
\end{thm}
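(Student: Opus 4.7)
The plan is to cast \eqref{spde_wave_intro} in Walsh's mild formulation and exploit the spectral representation of the wave propagator. If $\{\phi_k\}_{k\geq 1}$ is an $L^2([0,1],\mu)$-orthonormal basis of eigenfunctions of $-\Delta_{\mu}^b$ with eigenvalues $\lambda_k>0$, set
\begin{align*}
G(t,x,y)\coloneqq \sum_{k\geq 1}\frac{\sin(\sqrt{\lambda_k}\,t)}{\sqrt{\lambda_k}}\,\phi_k(x)\phi_k(y)
\end{align*}
and look for a solution of
\begin{align*}
u(t,x)=U_0(t,x)+\int_0^t\!\!\int_0^1 G(t-s,x,y)\,f(s,u(s,y))\,W(ds,dy),
\end{align*}
where $U_0$ is the spectral representation of the deterministic wave driven by $u_0,u_1$, which is well-defined by Assumption~\ref{hypo}(ii). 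Existence and uniqueness are obtained by a Picard iteration in $L^q(\Omega)$; the decisive analytic input is
\begin{align*}
\sup_{(t,x)\in[0,T]\times[0,1]}\int_0^t\!\!\int_0^1 G(s,x,y)^2\,d\mu(y)\,ds<\infty,
\end{align*}
which, via Parseval, reduces to $\sum_k\lambda_k^{-1}\phi_k(x)^2<\infty$ uniformly in $x$. Using the improved sup-norm estimate $\|\phi_k\|_\infty^2\lesssim\lambda_k^{\delta}$ from the earlier sections of the paper together with the Weyl asymptotics $\lambda_k\asymp k^{1/\gamma}$, this amounts exactly to $\delta+1<1/\gamma$, i.e.\ Assumption~\ref{hypo}\ref{hypoi}.

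For the spatial regularity claim (i), I would subtract the mild formula at $x$ and $x'$, apply Burkholder--Davis--Gundy and the Lipschitz/growth bounds on $f$ together with the a priori $L^q$-bound on $u$ to obtain
\begin{align*}
\mathbb{E}\,|u(t,x)-u(t,x')|^q\;\leq\;C\,\Bigl(\int_0^t\!\!\int_0^1\bigl(G(s,x,y)-G(s,x',y)\bigr)^2 d\mu(y)\,ds\Bigr)^{q/2}
\end{align*}
(plus a deterministic term from $U_0$ treated analogously). Parseval reduces the integral to $\sum_k\lambda_k^{-1}\sin^2(\sqrt{\lambda_k}s)(\phi_k(x)-\phi_k(x'))^2$; splitting at a cut-off $K$, one uses an equicontinuity estimate for $\phi_k$ with $k\leq K$, available from the integral identity $(\phi_k)'(x)=(\phi_k)'(0)-\lambda_k\int_0^x\phi_k\,d\mu$, and falls back on the improved sup-norm bound for $k>K$; optimizing $K$ in $|x-x'|$ yields a bound of order $|x-x'|$. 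Kolmogorov's continuity criterion then delivers spatial Hölder exponent $1/2-1/q$ for any $q>2$.

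Statement (ii) proceeds in the same spirit but with a time increment, so one must control
\begin{align*}
\int_0^{t'}\!\!\int_0^1\bigl(G(t-s,x,y)-G(t'-s,x,y)\bigr)^2 d\mu(y)\,ds+\int_{t'}^{t}\!\!\int_0^1 G(t-s,x,y)^2\,d\mu(y)\,ds
\end{align*}
in terms of $|t-t'|$. Here the resolvent-density approximation of $G$ announced in the abstract is crucial: it converts the oscillatory factor $\sin(\sqrt{\lambda_k}(t-t'))$ into a quantitative kernel estimate whose small-time behaviour is governed by the self-similar scaling of $\mu$ on $F$, and hence by the triple $(d_H,\nu_{\min},r_{\max})$. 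A cut-off argument analogous to the spatial case, now optimized against the scaling exponent $d_H+1+\log\nu_{\min}/\log r_{\max}$, yields the stated Hölder regularity in time, again via Kolmogorov.

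The main obstacle, and the genuine content of the paper, is the derivation of these sharp $L^2(\mu)$-bounds on space and time increments of $G$. In particular, both the improved sup-norm bound $\|\phi_k\|_\infty^2\lesssim\lambda_k^{\delta}$ on Cantor-like sets and the resolvent-density approximation of $G$ that produces the unusual temporal exponent are highly non-trivial and drive all three assertions; once they are in place, the Picard iteration for existence and uniqueness and the passage from $L^q$-estimates of increments to Hölder regularity through Kolmogorov are standard.
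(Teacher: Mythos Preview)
Your overall architecture---Picard iteration for existence and uniqueness, Burkholder--Davis--Gundy plus the Lipschitz/growth bounds on $f$ to reduce to $L^2(\mu)$-increments of the propagator, then Kolmogorov---is exactly what the paper does, and the role you assign to Assumption~\ref{hypo}\ref{hypoi} via $\sum_k\lambda_k^{-1}\|\phi_k\|_\infty^2<\infty$ is correct.

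The gap is in your spatial argument. The spectral cut-off you propose does \emph{not} yield a bound of order $|x-x'|$. From $(\phi_k)'(x)=(\phi_k)'(0)-\lambda_k\int_0^x\phi_k\,d\mu$ one gets at best $|\phi_k(x)-\phi_k(x')|\lesssim\lambda_k|x-x'|$, so the low-mode contribution to $\sum_k\lambda_k^{-1}(\phi_k(x)-\phi_k(x'))^2$ is of order $|x-x'|^2\sum_{k\leq K}\lambda_k$, while the high-mode tail is $\sum_{k>K}\lambda_k^{-1}\|\phi_k\|_\infty^2\asymp K^{\,\delta+1-1/\gamma}$. Optimizing $K$ gives an exponent strictly smaller than~$1$ (roughly $2(1-\gamma(1+\delta))/(2-\gamma\delta)$), and hence a spatial H\"older exponent strictly below $\tfrac12-\tfrac1q$. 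So this route does not prove part~(i) as stated.

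The paper circumvents this entirely: it uses the resolvent-density approximation for the \emph{spatial} increment as well, not only for the temporal one. The key identity is that $\sum_k(1+\lambda_k)^{-1}\phi_k(x)\phi_k(y)=\rho_1^b(x,y)$, and the Lipschitz continuity of $\rho_1^b$ (Proposition~\ref{resolvent_density_lipschitz}) gives directly
\[
\int_0^1\bigl(P_b(s,x,z)-P_b(s,x',z)\bigr)^2 d\mu(z)\;\leq\; Cs^2\bigl(\rho_1^b(x,x)-2\rho_1^b(x,x')+\rho_1^b(x',x')\bigr)\;\leq\; C's^2|x-x'|,
\]
via Lemmas~\ref{estimate_resolvent_lemma} and~\ref{estimate_resolvent_lemma_2}. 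This is what produces the full $|x-x'|^{q/2}$ bound and hence the exponent $\tfrac12-\tfrac1q$. In short: the resolvent approximation you invoke for~(ii) is equally the engine behind~(i), and replacing your cut-off by it closes the gap.
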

\begin{figure}[t]
\centering
\includegraphics[scale=0.35]{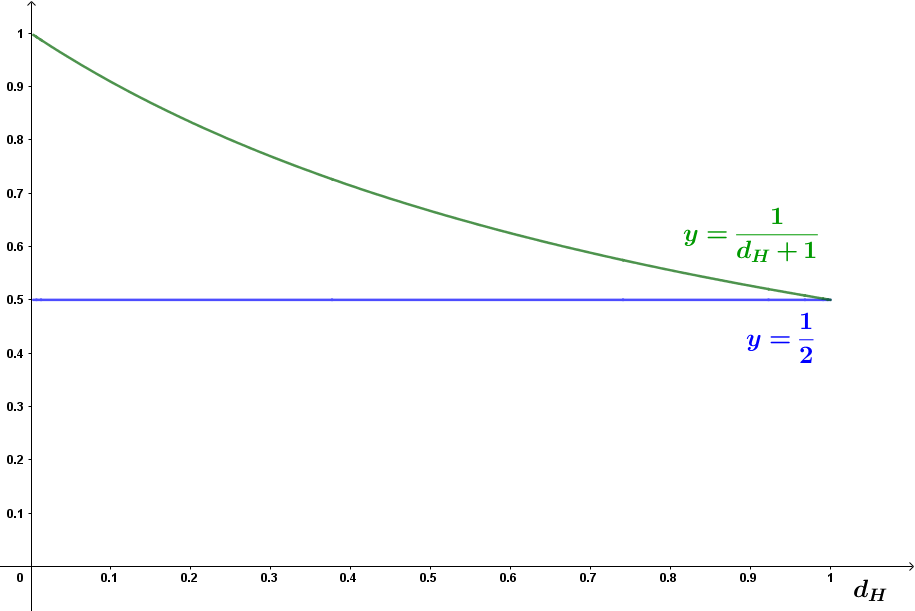}
\caption{Hölder exponent graphs}
\label{hoelder_plot}
\end{figure}

If $\mu$ is chosen as the natural measure, we have $\nu_{\min}=0$ and thus an increasing temporal Hölder exponent as the Hausdorff dimension of the considered Cantor-like set decreases. In particular, if $q$ can be chosen arbitrarily large, we obtain $\frac{1}{2}$ as ess. spatial and $\frac{1}{d_H+1}$ as ess. temporal Hölder exponent. This is visualized in Figure \ref{hoelder_plot}.\\

In preparation for proving the main results, we will have a closer look on the wave propagator of $\Delta_{\mu}^b$, defined by
\begin{align*}
P_D(t,x,y)=\sum_{k\geq 1}\frac{\sin\left(\sqrt{\lambda_k^D}t\right)}{\sqrt{\lambda_k^D}}\varphi_k^D(x)\varphi_k^D(y)
\end{align*}
and
\begin{align*}
P_N(t,x,y)=t+\sum_{k\geq 2}\frac{\sin\left(\sqrt{\lambda_k^N}t\right)}{\sqrt{\lambda_k^N}}\varphi_k^N(x)\varphi_k^N(y),
\end{align*}
respectively.
Here, $\lambda_k^b, k\geq 1$ are the eigenvalues and $\varphi_k^b, ~ k\geq 1$ the $L^2(\mu)$-normed eigenfunctions of the Neumann- (or Dirichlet- resp.) Krein-Feller operator $\Delta_{\mu}^b$. In order to investigate this object, we establish an improved estimate on the uniform norm of $\varphi_k^b$ since the known estimate, which grows exponentially in $k$ (see \cite[Lemma 4.1.6]{AE}), is to rough for our purposes. Particularly, we prove that a constant $C_2>0$ exists such that for all $k\in\mathbb{N}$
\begin{align*}
\left\lVert \varphi_k^b\right\rVert_{\infty}\leq C_2k^{\frac{\delta}{2}}.
\end{align*} 
A comparable result is known for the eigenfunction of p.c.f. Laplacians (see \cite[Theorem 4.5.4]{KA}). Afterwards, we approximate the wave propagator by proving that for $x\in F$, $t\in[0,T]$
\begin{align*}
\int_0^1\left(\left\langle P_b(t,\cdot,y),f_n^x\right\rangle - P_b(t,x,y)\right)^2d\mu(y)\to 0
\end{align*}

 as $n\to\infty,$ where the sequence $\left(f_n^x\right)_{n\in\mathbb{N}}$ approximates the Delta functional of $x$. Then, we show that the resulting approximating mild solutions have the desired continuity and that the regularity is preserved upon taking the limit. 
 Next to these continuity properties, we investigate the intermittency of mild solutions to \eqref{spde_wave_intro}. Roughly speaking, an intermittent process develops increasingly high peaks on small space-intervals when the time parameter  increases. This is a phenomenon of the mild solution to SPDEs that has found much attention in the last years (see, among many others, \cite{BCS}, \cite{HHS}, \cite{KAS} \cite{KKI} for parabolic and \cite{CJI}, \cite{DMI}, \cite{CKO} for hyperbolic SPDEs).  We call a mild solution $u$ weakly intermittent on $[0,1]$ if the upper moment Lyapunov exponents, which is the function $\bar \gamma$ defined by
\begin{align*}
\bar\gamma(p,x)\coloneqq \limsup_{t\to\infty}\frac{1}{t}\log\mathbb{E}\left[u(t,x)^p\right], ~~ p\in(0,\infty), x\in[0,1]
\end{align*}
satisfies
\begin{align*}
\bar\gamma(2,x)>0,~~~ \bar\gamma(p,x)<\infty, ~~ ~p\in [2,\infty), x\in[0,1]
\end{align*}
(see \cite[Definition 7.5]{KAS}).
We prove this under some conditions on $g$. \\
 
This paper is structured as follows. In Section \ref{section_2} we give definitions related to Krein-Feller operators and Cantor-like sets, we recall results concerning the spectral asymptotics and establish the explained estimate on the uniform norm of eigenfunctions. Furthermore, we develop a method to approximate the resolvent density and introduce the wave propagator.
Section \ref{section_3}  is dedicated to the analysis of SPDE \eqref{spde_wave_intro}, including the proofs of existence, uniqueness and Hölder continuity properties of the mild solution as well as the investigation of weak intermittency.
\par

\section{Preliminaries and Preparing Estimates}\label{section_2}
\subsection{Definition of Krein-Feller Operators on Cantor-like Sets}\label{Krein-Feller Operators and Cantor-like Sets}
First, we recall the definition and some analytical properties of the operator $\Delta_{\mu}^b$, where $b\in\{N,D\}$ and $\mu$ is a self-similar measure on a Cantor-like set according to the definition in Section \ref{section_1}. \par 
We denote the support of the measure $\mu$ and thus the Cantor-like set by $F$. If $[0,1]\setminus F\neq\emptyset$, $[0,1]\setminus F$ is open in $\mathbb{R}$ and can be written as
\begin{align}
[0,1]\setminus F = \bigcup_{i=1}^{\infty} (a_i,b_i)\label{offene_mengen} 
\end{align}
with $0<a_i<b_i<1$, $a_i,b_i\in[0,1]$ for $i\geq 1$. We define 
\begin{align*}
\mathcal{D}_{\lambda^1}^1\coloneqq\left\lbrace f:[0,1]\to\mathbb{R}: \text{there exists } f^{\prime}\in \mathcal{L}^2([0,1],\lambda^1): f(x)=f(0)+\int_0^x f^{\prime}(y)d\lambda^1(y),~ x\in[0,1]  \right\rbrace
\end{align*}
and $H^1\left([0,1],\lambda^1\right)$ as the space of all $\mathcal{H}\coloneqq L^2([0,1],\mu)$-equivalence classes having a $\mathcal{D}_{\lambda^1}^1-$represen\-tative. If $\mu=\lambda^1$ on $[0,1]$, this definition is equivalent to the definition of the Sobolev space $W_2^1$.

$H^1\left([0,1],\lambda^1\right)$ is the domain of the non-negative symmetric bilinear form $\mE$ on $\mathcal{H}$ defined by
\begin{align*}
\mathcal{E}(u,v)=\int_0^1 u'(x)v'(x)dx,~~~ u,v\in \mathcal{F}\coloneqq H^1\left([0,1],\lambda^1\right).
\end{align*}
It is known (see \cite[Theorem 4.1]{FD}) that $\left(\mE,\mathcal{F}\right)$ defines a Dirichlet form on $\mathcal{H}$. Hence, there exists an associated non-negative, self-adjoint operator $\Delta_{\mu}^N$ on $\mathcal{H}$ with $\mathcal{F}=\mathcal{D}\left(\left(-\Delta_{\mu}^N\right)^{\frac{1}{2}}\right)$ such that
\begin{align*}
\langle -\Delta_{\mu}^N u,v\rangle_{\mu}&=\mE(u,v), ~~u\in\mathcal{D}\left(\Delta_{\mu}^N\right),v\in \mathcal{F}
\end{align*}
and it holds 
\[\mathcal{D}\left(\Delta_{\mu}^N\right)=\left\{f\in \mathcal{H}: f \text{ has a representative } \bar f \text{ with } \bar f\in \mathcal{D}_{\mu}^2 \text{ and } \bar f'(0)=\bar f'(1)=0\right\}.\] $\Delta_{\mu}^N$ is called Neumann Krein-Feller operator w.r.t. $\mu$. Furthermore, let $\mathcal{F}_0\coloneqq H^1_0\left([0,1],\lambda^1\right)$ be the space of all $\mathcal{H}$-equivalence classes that have a $\mathcal{D}_{\lambda^1}^1-$representative $f$ such that $f(0)=f(1)=0.$
The bilinear form defined by
\begin{align*}
\mathcal{E}(u,v)=\int_0^1 u'(x)v'(x)dx,~~~ u,v\in \mathcal{F}_0
\end{align*}
is a Dirichlet form, too (see \cite[Theorem 4.1]{FD}).
Again, there exists an associated non-negative, self-adjoint operator $\Delta_{\mu}^D$ on $\mathcal{H}$ with $\mathcal{F}_0=\mathcal{D}\left(\left(-\Delta_{\mu}^D\right)^{\frac{1}{2}}\right)$ such that
\begin{align*}
\langle -\Delta_{\mu}^D u,v\rangle_{\mu}&=\mE(u,v), ~~ u\in \left(\Delta_{\mu}^D\right),~v\in \mathcal{F}_0
\end{align*}
and it holds 
\[\mathcal{D}\left(\Delta_{\mu}^D\right)=\left\{f\in \mathcal{H}: f \text{ has a representative } \bar f \text{ with } \bar f\in \mathcal{D}_{\mu}^2 \text{ and } \bar f(0)=\bar f(1)=0\right\}.\] $\Delta_{\mu}^D$ is called Dirichlet Krein-Feller operator w.r.t. $\mu$.\par

A concept to describe Cantor-like sets is given by the so-called word or code space. Let $I\coloneqq \{ 1,...,N\}$, $\mathbb{W}_n= I^n$ be the set of all sequences $\omega$ of length $|\omega|=n$, $\mathbb{W}^*\coloneqq \cup_{n\in\mathbb{N}} I^n,$ 
the set of all finite sequences and $\mathbb{W}\coloneqq I^{\infty}$ the set of all infinite sequences $\theta=\theta_1\theta_2\theta_3...$ with $\theta_i\in I$ for $i\in\mathbb{N}$. Then, $I$ is called alphabet and $\mathbb{W},~ \mathbb{W}^*,~ \mathbb{W}^n: ~ n\in\mathbb{N}$ are called word spaces. We define an ordering on $\mathbb{W}$ by denoting two words $\omega$ and $\sigma$ as equal if $\omega_i=\sigma_i$ for all $i\in\mathbb{N}$ and otherwise, we write
$\omega<\sigma :\Leftrightarrow \sigma_k<\omega_k$ or
$\omega>\sigma :\Leftrightarrow \sigma_k>\omega_k$, where $k\coloneqq\inf\{ n\in\mathbb{N}:\sigma_n\neq\omega_n\}$. In addition to an ordering we define a metric on the word space by the map $d:\mathbb{W}\times \mathbb{W}\rightarrow \mathbb{R},~ d(\omega,\sigma)=N^{-k}$ with $k$ defined as before. It is known (see e.g. \cite[Theorem 2.1]{BF})  
that for every $x\in [0,1]$ the map
\begin{align*}
\pi_x: \mathbb{W}\rightarrow F, ~~\sigma\mapsto\lim_{n \to\infty} S_{\sigma_1}\circ S_{\sigma_2}\circ...\circ S_{\sigma_n}(x) 
\end{align*}
is well-defined, continuous, surjective and independent of $x\in [0,1]$, which means $\pi_x(\sigma)=\pi_y(\sigma)$ for all $x,y\in [0,1],~ \sigma\in \mathbb{W}$. Therefore, for every $x\in[0,1]$ and every $y\in F$ there exists, at least, one element of $\mathbb{W}$ which is by $\pi_x$ associated to $y$.

\subsection{Spectral Theory of Krein-Feller Operators}
Let $b\in\{N,D\}$
and let $\mu$ be a self-similar measure on a Cantor-like set according to the given conditions.
Further, let $\gamma$ be the spectral exponent of $-\Delta_{\mu}^b$, that is the unique solution of
\begin{align}
\sum_{i=1}^N(\mu_ir_i)^{\gamma}=1. \label{spectral_exponent}
\end{align}

It is known from \cite[Proposition 6.3, Lemma 6.7, Corollary 6.9]{FA} that there exists an orthonormal basis $\lbrace \varphi_k^b: k\in\mathbb{N}\rbrace$ of $L_2([0,1],\mu)$ consisting of $L_2([0,1],\mu)$-normed eigenfunctions of $-\Delta_{\mu}^b$ and that for the related ascending ordered eigenvalues $\{\lambda^{b}_i:i\in\mathbb{N}\}$ it holds $0\leq\lambda_1^b\leq\lambda_2^b\leq...,$ where $\lambda_1^D>0$. 
Furthermore, by \cite{FAF} there exist constants $C_0,C_1>0$ such that for $k\geq 2$ 
\begin{align}
C_0k^{\frac{1}{\gamma}}\leq \lambda_k^b\leq C_1k^{\frac{1}{\gamma}}. \label{eigenwertabsch}
\end{align}

Next, we consider the uniform norm of an eigenfunction $\left\lVert \varphi_k^b\right\rVert_{\infty}$ for $k\geq 1$, where the situation is more complicated. The only estimate, established in \cite[Section 2]{FLZ} and \cite[Lemma 3.6]{AM}, is easy to derive and grows exponentially in $k$, which is  far to rough for later following heat kernel estimates. In the following proposition we establish a better estimate, where we do not use the explicit representation of the eigenfunctions as in \cite{AM}, but the ideas from \cite[Theorem 4.5.4]{KA} for a uniform norm estimate for Laplacians on p.c.f. fractals.

\begin{thm}\label{eigenfunction_estimate_theorem}
Let $\delta\coloneqq \max_{1\leq i\leq N}\frac{\log \mu_i}{\log\left((\mu_ir_i)^{\gamma}\right)}$. Then, there exists a constant $\bar C_2>0$ such that for all $k\in\mathbb{N}$ 
\begin{align*}
\left\lVert\varphi_k^b\right\rVert_{\infty}\leq \bar{C_2} \left(\lambda_k^b\right)^{\frac{\gamma}{2}\delta}.
\end{align*}
\end{thm}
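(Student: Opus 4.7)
The plan is to exploit the self-similarity of $\mu$ via a scaling argument patterned on Kigami's proof of Theorem 4.5.4 in \cite{KA}. For a finite word $\omega=\omega_1\cdots\omega_n\in\mathbb{W}_n$ I write $S_\omega:=S_{\omega_1}\circ\cdots\circ S_{\omega_n}$, $\mu_\omega:=\prod_i\mu_{\omega_i}$ and $r_\omega:=\prod_i r_{\omega_i}$. A direct change-of-variables based on \eqref{self_similarity} yields the scaling identity
\[
\Delta_\mu(f\circ S_\omega)=(\mu_\omega r_\omega)\,(\Delta_\mu f)\circ S_\omega,
\]
so that $\tilde v:=\varphi_k^b\circ S_\omega$ satisfies $\Delta_\mu \tilde v=-\tilde\lambda\,\tilde v$ on $[0,1]$ with rescaled eigenvalue $\tilde\lambda:=\lambda_k^b\mu_\omega r_\omega$, but \emph{without} any prescribed boundary conditions.

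For each $x\in F$ and $k\in\mathbb{N}$ I would pick an address $\sigma\in\mathbb{W}$ with $\pi_0(\sigma)=x$ and let $n=n(x,k)$ be the smallest integer satisfying $\lambda_k^b\,\mu_{\sigma_1\cdots\sigma_n}r_{\sigma_1\cdots\sigma_n}\leq C$, where $C>0$ is a fixed constant to be chosen. Setting $\omega:=\sigma_1\cdots\sigma_n$, one obtains $x\in S_\omega([0,1])$ together with the two-sided bounds
\[
\frac{C\min_i(\mu_ir_i)}{\lambda_k^b}\leq\mu_\omega r_\omega\leq\frac{C}{\lambda_k^b}.
\]
Moreover, because $\|\varphi_k^b\|_{L^2(\mu)}=1$, the self-similarity relation gives $\|\tilde v\|_{L^2(\mu)}^2=\mu_\omega^{-1}\int_{S_\omega([0,1])}(\varphi_k^b)^2\,d\mu\leq\mu_\omega^{-1}$.

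The core step is the following uniform local estimate: there exists $K=K(C)>0$ such that whenever $\tilde v\in\mathcal{D}_\mu^2$ satisfies $\Delta_\mu\tilde v=-\tilde\lambda\tilde v$ on $[0,1]$ with $\tilde\lambda\in[0,C]$ and arbitrary boundary data, one has $\|\tilde v\|_\infty\leq K\|\tilde v\|_{L^2(\mu)}$. I would prove this by parameterizing the two-dimensional solution space by the fundamental pair $c_{\tilde\lambda},s_{\tilde\lambda}$ determined by $(c_{\tilde\lambda}(0),c_{\tilde\lambda}'(0))=(1,0)$ and $(s_{\tilde\lambda}(0),s_{\tilde\lambda}'(0))=(0,1)$, constructed and shown jointly continuous in $(x,\tilde\lambda)$ by the usual Picard iteration applied to the integrated Krein-Feller equation. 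Because every solution of $\Delta_\mu w=-\tilde\lambda w$ is affine on each component of $[0,1]\setminus F$ (on gaps, $(w')^\mu$ integrates trivially against $\mu$) with values determined by $w|_F$, and because endpoints of gaps lie in $F$, $w=0$ $\mu$-a.e.\ forces $w\equiv 0$ on $[0,1]$. Hence $c_{\tilde\lambda},s_{\tilde\lambda}$ remain linearly independent in $L^2(\mu)$, so $\|\cdot\|_\infty$ and $\|\cdot\|_{L^2(\mu)}$ induce equivalent norms on this two-dimensional space; the equivalence constant is continuous in $\tilde\lambda$ and therefore uniformly bounded on the compact interval $[0,C]$.

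Combining the three steps, for $x\in F$,
\[
|\varphi_k^b(x)|=|\tilde v(S_\omega^{-1}(x))|\leq\|\tilde v\|_\infty\leq K\|\tilde v\|_{L^2(\mu)}\leq K\mu_\omega^{-1/2}.
\]
From the definition of $\delta$ one directly reads $\mu_i\geq(\mu_ir_i)^{\gamma\delta}$ for every $i$, which passes multiplicatively to words as $\mu_\omega\geq(\mu_\omega r_\omega)^{\gamma\delta}$. Together with the lower bound on $\mu_\omega r_\omega$ from Step 2 this gives
\[
\mu_\omega^{-1/2}\leq(\mu_\omega r_\omega)^{-\gamma\delta/2}\leq\left(\tfrac{C\min_i(\mu_ir_i)}{\lambda_k^b}\right)^{-\gamma\delta/2}\leq \bar{C_2}\,(\lambda_k^b)^{\gamma\delta/2}.
\]
Finally, because every $\varphi_k^b$ is affine on components of $[0,1]\setminus F$ with endpoints in $F$, the pointwise bound valid on $F$ propagates to all of $[0,1]$. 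I expect the main obstacle to lie in Step 3: keeping the equivalence constant between $\|\cdot\|_\infty$ and $\|\cdot\|_{L^2(\mu)}$ uniformly bounded as $\tilde\lambda$ ranges over $[0,C]$, i.e.\ ruling out uniform degeneracy of $\|\cdot\|_{L^2(\mu)}$ on the rescaled two-dimensional solution space. An alternative would be a Sobolev-type inequality $\|\tilde v\|_\infty\leq\|\tilde v\|_{L^2(\mu)}+\mathcal{E}(\tilde v,\tilde v)^{1/2}$ combined with a careful control of the boundary terms produced by integration by parts for $\tilde v$ without prescribed values at $0,1$.
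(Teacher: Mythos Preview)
Your proposal is correct and takes a genuinely different route from the paper. The paper follows Kigami's Nash-inequality machinery literally: it first establishes the interpolation inequality $\|u\|_\mu^{2+4/\theta}\leq C(\mathcal{E}(u)+\|u\|_\mu^2)\|u\|_1^{4/\theta}$ with $\theta=2\gamma\delta$, by combining a Poincar\'e-type bound with the self-similar scaling of $\mu$ and $\mathcal{E}$ over partitions $\Lambda_\lambda$, then invokes heat-semigroup ultracontractivity (\cite[Prop.~B.3.7]{KA}) to obtain $\|T_t\varphi_k\|_\infty\leq C t^{-\theta/4}$, and finally sets $t=1/\lambda_k$. Your argument bypasses all of this by exploiting a one-dimensional feature: the solution space of $\Delta_\mu w=-\tilde\lambda w$ is two-dimensional, so the constant in $\|w\|_\infty\leq K\|w\|_{L^2(\mu)}$ can be controlled by a finite-dimensional compactness argument once the rescaled eigenvalue lies in a fixed interval $[0,C]$. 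Your Step~3 is indeed the crux, and your sketch (continuous dependence of the fundamental pair $c_{\tilde\lambda},s_{\tilde\lambda}$ on $\tilde\lambda$ via the Volterra equation, plus the observation that a continuous solution vanishing $\mu$-a.e.\ vanishes identically because it is affine on gaps) is sound; the equivalence constant is then a continuous function of $\tilde\lambda$ on a compact set. The paper's route is more portable---it transfers to p.c.f.\ fractals and yields the Nash inequality as an independent byproduct---while yours is more elementary and avoids semigroup theory entirely, at the price of being specific to the ODE setting. One small remark: you describe your approach as ``patterned on Kigami's proof of Theorem~4.5.4'', but Kigami's proof \emph{is} the Nash-inequality route the paper takes; your actual argument is closer in spirit to classical Sturm--Liouville localization.
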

Hereby, $\left\lVert f\right\Vert_{\infty}\coloneqq \text{ess sup}_{x\in[0,1]}|f(x)|$. This is an estimate for the essential supremum, but it also holds for the supremum of the representative in $\mathcal{D}^2_{\mu}$, since this representative is continuous on $[0,1]$ and linear on $(a_i,b_i),$ $i\in\mathbb{N}$,  Inequality \eqref{eigenwertabsch} implies with $C_2\coloneqq C_1^{\frac{\delta}{2}}\bar{C_2}$
\begin{align}
\left\lVert\varphi_k^b\right\rVert_{\infty}\leq C_2 k^{\frac{\delta}{2}}.\label{eigenfunction_estimate}
\end{align}
To verify Theorem \ref{eigenfunction_estimate_theorem}, we follow closely the proof of \cite[4.5.4]{KA}. First, we need a few preparations. Thereby, $\mE(u) \coloneqq \mE(u,u)$.
\begin{lem}\label{dirichletform_abs_1}
There exists a constant $c_0>0$ such that for all $u\in\mathcal{F}_0$
\begin{align*}
\left\lVert u\right\rVert^2_{\mu}\leq c_0\mathcal{E}(u).
\end{align*}
\end{lem}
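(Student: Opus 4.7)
The plan is to establish the inequality via a pointwise bound on $|u(x)|$ coming from the Dirichlet boundary conditions, and then integrate against $\mu$, exploiting the fact that $\mu$ is a probability measure.

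First, I would take $u \in \mathcal{F}_0 = H^1_0([0,1],\lambda^1)$ with representative $\bar u \in \mathcal{D}_{\lambda^1}^1$ satisfying $\bar u(0) = \bar u(1) = 0$. Since $\bar u(x) = \int_0^x \bar u'(y)\, d\lambda^1(y)$, the Cauchy--Schwarz inequality yields, for every $x \in [0,1]$,
\[
|\bar u(x)|^2 \;\leq\; x \int_0^x (\bar u'(y))^2 \, d\lambda^1(y) \;\leq\; \int_0^1 (\bar u'(y))^2 \, d\lambda^1(y) \;=\; \mathcal{E}(u).
\]
Hence $\|\bar u\|_\infty^2 \leq \mathcal{E}(u)$.

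Next, I would integrate this pointwise estimate with respect to $\mu$. Since $\mu$ is a Borel probability measure on $[0,1]$,
\[
\|u\|_\mu^2 \;=\; \int_0^1 \bar u(x)^2 \, d\mu(x) \;\leq\; \|\bar u\|_\infty^2 \cdot \mu([0,1]) \;\leq\; \mathcal{E}(u),
\]
so the claim holds with $c_0 = 1$. (If one wanted a sharper constant, a symmetric argument using $\bar u(x) = -\int_x^1 \bar u'(y)\,d\lambda^1(y)$ gives $|\bar u(x)|^2 \leq \min(x, 1-x)\,\mathcal{E}(u)$, but any fixed constant suffices for the purposes of this lemma.)

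There is really no obstacle here: the only subtle point is making sure we are allowed to pass from the equivalence class $u \in \mathcal{H}$ to its continuous representative $\bar u$ on $[0,1]$ before integrating against $\mu$, which could in principle be singular with respect to $\lambda^1$. This is exactly why the definition of $\mathcal{F}_0$ insists that the representative be continuous (indeed, absolutely continuous with respect to $\lambda^1$), so that the pointwise bound $\|\bar u\|_\infty \leq \mathcal{E}(u)^{1/2}$ translates unambiguously into a bound on $\int \bar u^2 \, d\mu$.
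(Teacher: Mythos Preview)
Your proof is correct, but it takes a different route from the paper's. The paper invokes the spectral gap directly: since $\lambda_1^D>0$, the variational characterisation of the first Dirichlet eigenvalue gives $\mathcal{E}(u)\geq\lambda_1^D\lVert u\rVert_{\mu}^2$ for all $u\in\mathcal{F}_0$, so $c_0=1/\lambda_1^D$ works. Your argument is more elementary and self-contained: you use only the boundary condition $\bar u(0)=0$, the fundamental theorem of calculus, and Cauchy--Schwarz to obtain the Sobolev-type bound $\lVert\bar u\rVert_\infty^2\leq\mathcal{E}(u)$, and then integrate against the probability measure $\mu$ to get $c_0=1$. The paper's approach yields the sharp constant (and since elsewhere it is noted that $\lambda_1^D>1$, in fact $1/\lambda_1^D<1$), but at the cost of relying on the eigenvalue theory and an external reference; your approach is entirely hands-on and makes transparent why the measure $\mu$ plays no real role beyond having total mass one. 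Either is perfectly adequate for the lemma's purpose, since only the existence of some $c_0$ is needed downstream.
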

\begin{proof}
It holds $\lambda_1^D>0$ and therefore (compare \cite[Theorem 1.3]{EES}) \[\mathcal{E}(u)\geq\lambda_1^D\left\lVert u\right\rVert_{\mu}^2, ~~ u\in\mathcal{F}_0.\] 
\end{proof}
\begin{lem}\label{2norm_absch}
There is a constant $c_1>0$ such that for all $u\in \mathcal{F}$
\begin{align*}
\left\lVert u\right\rVert^2_{\mu}\leq c_1\left(\mathcal{E}(u)+\left\lVert u\right\rVert_1^2\right),
\end{align*}
where $\left\lVert f\right\rVert_{1}\coloneqq \int_0^1 |f(x)|d\mu(x)$.
\end{lem}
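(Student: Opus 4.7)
The plan is to exploit the fact that every element of $\mathcal{F}=H^1([0,1],\lambda^1)$ has a representative in $\mathcal{D}^1_{\lambda^1}$, which by its very definition is absolutely continuous on $[0,1]$, and that $\mu$ is a Borel probability measure on $[0,1]$. The argument is a one-dimensional Poincaré-type inequality: the $L^1(\mu)$-norm controls the ``average height'' of $u$, while $\mathcal{E}(u)$ controls its oscillation via the Sobolev embedding $H^1 \hookrightarrow C$ in dimension one.

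Concretely, I would work with the absolutely continuous representative of $u$. Writing $u(x)-u(y)=\int_y^x u'(z)\,d\lambda^1(z)$ and applying Cauchy--Schwarz yields
\begin{align*}
|u(x)-u(y)| \leq |x-y|^{1/2}\,\mathcal{E}(u)^{1/2} \leq \mathcal{E}(u)^{1/2}
\end{align*}
uniformly for $x,y\in[0,1]$. The triangle inequality then gives $|u(x)|\leq |u(y)|+\mathcal{E}(u)^{1/2}$.

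Since $\mu$ is a probability measure, integrating this pointwise bound in $y$ against $\mu$ produces
\begin{align*}
|u(x)| \leq \|u\|_1 + \mathcal{E}(u)^{1/2}, \qquad x\in[0,1].
\end{align*}
Squaring (using $(a+b)^2\leq 2a^2+2b^2$) and integrating once more in $x$ against $\mu$ delivers $\|u\|_\mu^2 \leq 2\|u\|_1^2 + 2\mathcal{E}(u)$, so $c_1=2$ works.

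I do not anticipate a serious obstacle: the only point requiring care is that the pointwise inequalities are justified for the continuous $\mathcal{D}^1_{\lambda^1}$-representative (so that evaluating at arbitrary $x,y\in[0,1]$, and in particular at points of $\mathrm{supp}\,\mu=F$, is meaningful). Once this is noted, both the Sobolev-type sup bound and the final double integration against the probability measure $\mu$ are immediate.
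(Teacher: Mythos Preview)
Your proof is correct and in fact cleaner than the paper's. You use the one-dimensional Sobolev embedding directly: the absolutely continuous representative satisfies $|u(x)-u(y)|\le \mathcal{E}(u)^{1/2}$, and averaging in $y$ against the probability measure $\mu$ gives $|u(x)|\le \|u\|_1+\mathcal{E}(u)^{1/2}$, hence $\|u\|_\mu^2\le 2\mathcal{E}(u)+2\|u\|_1^2$. This yields the explicit constant $c_1=2$ and does not rely on Lemma~\ref{dirichletform_abs_1}.

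The paper instead follows Kigami's template for p.c.f.\ fractals: it decomposes $u=u_0+(u-u_0)$ with $u_0$ the harmonic (here: affine) function matching the boundary values, bounds $\|u-u_0\|_\mu$ via Lemma~\ref{dirichletform_abs_1} and the orthogonality $\mathcal{E}(u-u_0)\le\mathcal{E}(u)$, and controls $\|u_0\|_\mu$ by $\|u_0\|_1$ using that the space of harmonic functions is two-dimensional. Your argument is more elementary and self-contained in the interval setting; the paper's route has the advantage that it transfers verbatim to the p.c.f.\ framework of \cite{KA}, where no direct fundamental-theorem-of-calculus bound on $|u(x)-u(y)|$ is available and one must go through harmonic splitting. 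In the present one-dimensional context your approach is the shorter one.
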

\begin{proof}
Let $u\in\mathcal{F}$ and $u_0$ be the unique harmonic function with $u_0(0)=u(0)$ and $u_0(1)=u(1)$, that is $u_0(x)\coloneqq u(0)(1-x)+u(1)x.$ We have $(u-u_0)(0)=(u-u_0)(1)=0$ and thus $u-u_0\in\mathcal{F}_0$. Since the space of harmonic functions on $[0,1]$ with two boundary conditions is two-dimensional, there exists $c_1'>0 $ such that for all harmonic functions $u_0$
\begin{align*}
\left\lVert u_0\right\rVert_{\mu}\leq c_1'\left\lVert u_0\right\rVert_1
\end{align*}
and since $\mu$ is a probability measure we have for all $u\in\mathcal{F}$
\begin{align*}
\left\lVert u \right\rVert_1\leq \left\lVert u\right\rVert_{\mu}.
\end{align*}
Furthermore,
\begin{align*}
\mE(u-u_0)&=\mE(u)-2\mE(u,u_0)+\mE(u_0)\\
&=\mE(u)-2\int_0^1u'(x)(u(1)-u(0))dx+(u(1)-u(0))^2\\
&=\mE(u)-2(u(1)-u(0))^2+(u(1)-u(0))^2\\
&=\mE(u)-(u(1)-u(0))^2
\end{align*}
and thus
\begin{align*}
\mE(u-u_0)\leq\mE(u).
\end{align*}
By Lemma \ref{dirichletform_abs_1} and the above calculations,
\begin{align*}
\left\lVert u \right\rVert_{\mu}&\leq \left\lVert u_0 \right\rVert_{\mu}+\left\lVert u-u_0\right\rVert_{\mu}\\
&\leq c_1'\left\lVert u_0\right\rVert_1+\sqrt{c_0\mathcal{E}(u-u_0,u-u_0)}\\
&\leq c_1'(\left\lVert u\right\rVert_1+\left\lVert u-u_0\right\rVert_1)+\sqrt{c_0\mathcal{E}(u-u_0,u-u_0)}\\
&\leq c_1'(\left\lVert u\right\rVert_1+\left\lVert u-u_0\right\rVert_{\mu})+\sqrt{c_0\mathcal{E}(u-u_0,u-u_0)}\\
&\leq c_1'\left\lVert u\right\rVert_1+c_1'\sqrt{c_0\mathcal{E}(u-u_0,u-u_0)}+\sqrt{c_0\mathcal{E}(u-u_0,u-u_0)}\\
&\leq 2c_0^{\frac{1}{2}}c_1'(\left\lVert u\right\rVert_1+\sqrt{\mathcal{E}(u)}).
\end{align*}
The assertion follows from the fact that for positive numbers $a,b,c$ with  $a\leq b+c$ it holds~ $a^2\leq 2(b^2+c^2)$.
\end{proof}
Moreover, we need scaling properties for $\mu$ and $\mE$. Preliminary, we introduce the notion of a partition (see \cite[Definition 1.3.9]{KA}).
\begin{defi}
For $\omega\in\mathbb{W}^*$ let
$
\Sigma_{\omega}\coloneqq \{\sigma=\sigma_1\sigma_2...\in\mathbb{W}: \sigma_i=\omega_i \text{ for all } 1\leq i\leq |\omega|\}.
$
 A finite subset $\Lambda\subset \mathbb{W}^*$ is called partition if it holds $\Sigma_{\omega}\cap\Sigma_{\sigma}=\emptyset$ for $\omega\neq\sigma\in\Lambda$ and $\mathbb{W}=\bigcup_{\omega\in\Lambda}\Sigma_{\omega}$.
\end{defi}
We introduce some notation for the following lemma. Let $w\in\mathbb{W}^*$. For a function $f$  we define $f_{\omega}\coloneqq f_{\omega_1}\circ f_{\omega_2}\circ ...\circ f_{\omega_{|\omega|}}$. Analogously, the pushforward measure $\mu\circ f_{\omega_1}\circ f_{\omega_2}\circ ...\circ f_{\omega_{|\omega|}}$ is denoted by  $\mu\circ f_{\omega}$.
\begin{lem}\label{scaling_prop}
Let $\Lambda$ be a partition. It holds
\begin{itemize}
\item[(i)] 
$\begin{aligned}[t]
\mu=\sum_{\omega\in\Lambda}\mu_{\omega}(\mu\circ S_{\omega}^{-1}) 
\end{aligned}$,
\item[(ii)] 
$\begin{aligned}[t]
\sum_{\omega\in\Lambda}r_{\omega}^{-1}\mE(u\circ S_{\omega})
&\leq \mE(u) \text{ for all } u\in\mathcal{F}.
\end{aligned}$
\end{itemize}
\end{lem}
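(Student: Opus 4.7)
For part (i), the plan is to iterate the self-similarity identity \eqref{self_similarity} enough times to reach the deepest word of the partition, then regroup by $\Lambda$. Set $n\coloneqq\max_{\omega\in\Lambda}|\omega|$ and apply \eqref{self_similarity} $n$ times to obtain, for any Borel set $A\subseteq[0,1]$,
\[
\mu(A)=\sum_{\sigma\in\mathbb{W}_n}\mu_{\sigma}\,\mu(S_{\sigma}^{-1}(A)).
\]
Because $\mathbb{W}=\bigcup_{\omega\in\Lambda}\Sigma_{\omega}$ disjointly, every length-$n$ word $\sigma$ extends a unique $\omega\in\Lambda$, so I may write $\sigma=\omega\tau$ with $|\tau|=n-|\omega|$, use the multiplicativity $\mu_{\sigma}=\mu_{\omega}\mu_{\tau}$ and $S_{\sigma}=S_{\omega}\circ S_{\tau}$, and regroup the sum by $\omega$. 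Inside each block the inner sum over $\tau$ collapses, via another $(n-|\omega|)$-fold application of \eqref{self_similarity} applied to the set $S_{\omega}^{-1}(A)$, to $\mu(S_{\omega}^{-1}(A))$, yielding (i).

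For part (ii), the plan is to perform the standard energy change-of-variables and exploit the interval geometry. For $u\in\mathcal{F}$, since $S_{\omega}$ is affine with slope $r_{\omega}=r_{\omega_1}\cdots r_{\omega_{|\omega|}}$, the chain rule for weak derivatives gives $(u\circ S_{\omega})'(x)=r_{\omega}u'(S_{\omega}(x))$ almost everywhere, and substituting $y=S_{\omega}(x)$ yields
\[
r_{\omega}^{-1}\mE(u\circ S_{\omega})=\int_{S_{\omega}([0,1])}\left(u'(y)\right)^2 dy.
\]
Summing over $\omega\in\Lambda$, the claim reduces to the geometric statement that the intervals $S_{\omega}([0,1])$, $\omega\in\Lambda$, are pairwise essentially disjoint, so that their combined integral is bounded by $\int_0^1 (u'(y))^2 dy=\mE(u)$.

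The main obstacle is precisely this essential disjointness. At depth one it is immediate from the hypothesis $S_1(0)=0<S_1(1)\leq S_2(0)<\ldots<S_N(1)=1$, which forces the intervals $S_i([0,1])$ to meet only at the shared endpoints $S_i(1)=S_{i+1}(0)$. To pass to arbitrary words in $\Lambda$, I use that a partition contains no two words one of which is a proper prefix of the other (else the cylinders $\Sigma_{\omega},\Sigma_{\sigma}$ would nest and fail to be disjoint). Hence any distinct $\omega,\sigma\in\Lambda$ share a maximal common prefix $\tau$ and diverge at distinct letters $i\neq j$; their images lie in $S_{\tau}(S_i([0,1]))$ and $S_{\tau}(S_j([0,1]))$, which by the depth-one observation and the monotonicity of the affine map $S_{\tau}$ intersect in at most one point. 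A finite union of singletons is Lebesgue-null, so the additivity used in the displayed integral step is justified and the inequality (ii) follows.
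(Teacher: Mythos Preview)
Your argument is correct in both parts. The paper itself omits the proof entirely, stating only that ``it works by standard arguments, as in \cite[Section 3.2.1]{AE}''; your proposal supplies exactly such a standard argument---iterating the self-similarity relation down to depth $n=\max_{\omega\in\Lambda}|\omega|$ and regrouping for (i), and using the affine change of variables together with the essential disjointness of the cells $S_\omega([0,1])$ for (ii). One minor remark: in (ii) you could note that the sum is in fact an equality when $\sum_i r_i=1$ (the cells tile $[0,1]$), while the strict inequality arises precisely from the Lebesgue mass of the gaps $[0,1]\setminus\bigcup_{\omega\in\Lambda}S_\omega([0,1])$ in the Cantor-like case; but this is not needed for the lemma as stated.
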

We skip the proof of this lemma since it works by standard arguments, as in \cite[Section 3.2.1]{AE}.
\begin{proof}[Proof of Theorem \ref{eigenfunction_estimate_theorem}] Let $u\in\mathcal{F}$ be fixed. Then,
\begin{align}
\left\lVert u\right\rVert_{\mu}^2&=\int_0^1u^2(x)d\mu(x)\notag\\
&=\sum_{\omega\in\Lambda}\mu_w\int_0^1 u^2(x)d\mu\circ S_{\omega}^{-1}(x)\label{eigenfunction_estimate_main_proof_1}\\
&=\sum_{\omega\in\Lambda}\mu_w\int_0^1 u(S_{\omega}(x))^2d\mu(x)\notag\\
&\leq c_1\sum_{\omega\in\Lambda}\mu_w\left(\mE(u\circ S_{\omega})+\left\lVert u\circ S_{\omega}\right\rVert_1^2\right)\label{eigenfunction_estimate_main_proof_2}\\
&\leq c_1\left(\max_{\omega\in\Lambda}\{\mu_{\omega}r_{\omega}\}\sum_{\omega\in\Lambda}r_{\omega}^{-1}\mE(u\circ S_{\omega})+\sum_{\omega\in\Lambda}\mu_w^{-1}\left(\mu_{\omega}\int_0^1|u\circ S_{\omega}|d\mu\right)^2\right)\\
&\leq c_1\left(\max_{\omega\in\Lambda}\{\mu_{\omega}r_{\omega}\}\mE(u)+\min_{\omega\in\Lambda}\{\mu_{\omega}^{-1}\}\left\lVert u\right\rVert_1^2\right).\label{eigenfunction_theorem_proof_1}
\end{align}
Hereby, equation \eqref{eigenfunction_estimate_main_proof_1} follows from Lemma \ref{scaling_prop}(i), inequality \eqref{eigenfunction_estimate_main_proof_2} from Lemma \ref{2norm_absch} and inequality \eqref{eigenfunction_theorem_proof_1} from  Lemma \ref{scaling_prop}(ii).
Now, let $\nu_i\coloneqq (\mu_ir_i)^{\gamma}, ~ i=1,...,N$. By  \eqref{spectral_exponent} it holds $\sum_{i=1}^N\nu_i=1$.
Let $\lambda\in(0,1)$ and the partition $\Lambda_{\lambda}$ defined by
\[\Lambda_{\lambda}=\lbrace \omega\in\mathbb{W}^*: \nu_{\omega_1}\cdots\nu_{\omega_{|\omega|-1}}>\lambda\geq\nu_{\omega}\rbrace.\]
By definition of  $\Lambda_{\lambda}$ we have for $\omega\in\Lambda_{\lambda}$
$\nu_{\omega}^{\frac{1}{\gamma}} =$
$\mu_{\omega}r_{\omega}\leq\lambda^{\frac{1}{\gamma}}$ and from that
$\max_{\omega\in\Lambda_{\lambda}}(\mu_{\omega}r_{\omega})\leq\lambda^{\frac{1}{\gamma}}$. 
Furthermore, it is known from \cite[Proposition 4.5.2]{KA} that there exists $C_2'>0$, such that 
$
\min_{\omega\in\Lambda_{\lambda}}\mu_{\omega}\geq C_2'\lambda^{\delta}$, from which it follows $\left(\min_{\omega\in\Lambda_{\lambda}}\mu_{\omega}\right)^{-1}\leq\frac{1}{C_2'} \lambda^{-\delta}$. 
This and \eqref{eigenfunction_theorem_proof_1} yield to the existence of a constant $C_2''>0$ such that for all $\lambda\in(0,1)$, $u\in\mathcal{F}$
\begin{align*}
\left\lVert u\right\rVert_{\mu}^2\leq C_2''\left(\lambda^{\frac{1}{\gamma}}\mE(u)+\lambda^{-\delta}\left\lVert u\right\rVert_1^2\right).
\end{align*}
Let $\theta\coloneqq 2\gamma\delta$. We assume that $\mE(u)>\left\lVert u\right\rVert_1^2$ and choose $\lambda\in(0,1)$ such that $\lambda^{\frac{1}{\gamma}+\delta}=\frac{\left\lVert u\right\rVert_1^2}{\mE(u)}$. It follows
\begin{align}
\left\lVert u\right\rVert_{\mu}^2\leq 2C_2''\lambda^{-\delta}\left\lVert u\right\rVert_1^2 \label{eigenfunction_estimate_main_proof_5}
\end{align}
and therefore
\begin{align}
\left\lVert u\right\rVert_{\mu}^{\frac{4}{\theta}}&\leq (2C_2'')^{\frac{2}{\theta}}\lambda^{-\frac{2\delta}{\theta}}\left\lVert u\right\rVert_1^{\frac{4}{\theta}}\notag\\
&=(2C_2'')^{\frac{2}{\theta}}\lambda^{-\frac{1}{\gamma}}\left\lVert u\right\rVert_1^{\frac{4}{\theta}}. \label{eigenfunction_estimate_main_proof_6}
\end{align}
By combining \eqref{eigenfunction_estimate_main_proof_5} and \eqref{eigenfunction_estimate_main_proof_6}  we get
\begin{align*}
\left\lVert u\right\rVert_{\mu}^{2+\frac{4}{\theta}}&\leq (2C_2'')^{1+\frac{2}{\theta}}\lambda^{-\delta -\frac{1}{\gamma}}\left\lVert u\right\rVert_1^{2+\frac{4}{\theta}}\\
&=(2C_2'')^{1+\frac{2}{\theta}}\left\lVert u \right\rVert_1^{\frac{4}{\theta}}\mE(u).
\end{align*}
If it holds $\mE(u)\leq\left\lVert u\right\rVert_1^2$, from Lemma \ref{2norm_absch} it follows
\begin{align*}
\left\lVert u\right\rVert_{\mu}^2\leq 2c_1\left\lVert u\right\rVert_1^2
\end{align*}
and thus
\begin{align*}
\left\lVert u\right\rVert_{\mu}^{2+\frac{4}{\theta}}\leq \left(2c_1\right)^{1+\frac{2}{\theta}}\left\lVert u\right\rVert_1^2\left\lVert u\right\rVert_1^{\frac{4}{\theta}}.
\end{align*}
All in all, there is a $C_2'''>0$ such that for all $u\in\mathcal{F}$ the Nash-type inequality 
\begin{align}
\left\lVert u\right\rVert_{\mu}^{2+\frac{4}{\theta}}\leq C_2'''\left(\mE(u)+\left\lVert u\right\rVert_{\mu}^2\right)\left\lVert u\right\rVert_1^{\frac{4}{\theta}}\label{nash}
\end{align}
is fulfilled.
Let $\psi: L^2([0,1],\mu)\to L^2(F,\mu)$, $f\to \left.f\right|_{F}$ and
$ \widetilde\Delta_{\mu}^N:~ \psi\left(\mathcal{D}\left(\Delta_{\mu}^N\right)\right)\to L^2(F,\mu)$,
  $u\to\psi\circ\Delta_{\mu}^N\circ \psi^{-1}u$. Then, $\widetilde\Delta_{\mu}^N$ is self-adjoint, has eigenvalues $\lambda_k^N$ with eigenfunctions $\psi\circ\varphi_k^N$ for $k\in\mathbb{N}$ and the Dirichlet form $\widetilde\mE\left(\widetilde u,\widetilde v\right)\coloneqq\mE(\psi^{-1}\widetilde u,\psi^{-1}\widetilde v)$, $\widetilde u,\widetilde v\in\widetilde{\mathcal{F}}\coloneqq\psi(\mathcal{F})$ is associated (see Appendix \ref{appendix_restriction_supp}).

Then, for all $\widetilde u\in\widetilde{\mathcal{F}}$ the Nash-type inequality
\begin{align}
\left\lVert\widetilde u\right\rVert_{\mu}^{2+\frac{4}{\theta}}\leq C_2'''\left(\widetilde{\mE}\left(\widetilde u\right)+\left\lVert\widetilde u\right\rVert_{\mu}^2\right)\left\lVert\widetilde u\right\rVert_1^{\frac{4}{\theta}}
\end{align}
is satisfied. Since it holds $\mu(O)>0$ for all open sets $O\subseteq F$, we can apply \cite[Proposition B.3.7]{KA} to get the existence of $C_2''''>0$ such that for all $k\in\mathbb{N}$
\begin{align}
\left\lVert\widetilde T_t^N\widetilde \varphi_k^N\right\rVert_{\infty}\leq C_2''''t^{-\frac{\theta}{4}}, \label{eigenfunction_estimate_semigroup}
\end{align}
where $\left(\widetilde T_{t}^N\right)_{t\geq 0}$ is the strongly continuous semigroup associated to $ \widetilde\Delta_{\mu}^N.$
With $\widetilde T_t^N\widetilde\varphi_k^N=e^{-\lambda_k^N t}\widetilde\varphi_k^N$ for $t\geq 0$ (see \cite [Corollary B.2.7]{KA}), $t\coloneqq \frac{1}{\lambda_k^N}$ and $\bar C_2\coloneqq C_2''''e$
we obtain for all $k\in\mathbb{N}$
\begin{align*}
\left\lVert \widetilde \varphi_k^N\right\rVert_{\infty}\leq \bar C_2\lambda_k^{\frac{\gamma\delta}{2}},
\end{align*}
from which the assertion follows for $b=N$ since $\varphi_k^b$ is linear on the intervals in $F^c$. In case of $b=D$ the proof works analogously since $\mathcal{F}_0\subseteq \mathcal{F}$.
\end{proof}

\subsection{Properties of the Resolvent Operator}\label{resolvent_chapter}

For $\lambda>0$ and $b\in\{N,D\}$ let $\rho_{\lambda}^b$ be the resolvent density of $\Delta_{\mu}^b$. That is, with $R^{\lambda}_{b}\coloneqq (\lambda-\Delta_{\mu}^b)^{-1}$ it holds
\begin{align*}
R^{\lambda}_{b}f(x)=\int_0^1\rho_{\lambda}^b(x,y)f(y)d\mu(y), ~~~ f\in \mH.
\end{align*}
Such a mapping exists and is given by (compare \cite[Theorem 6.1]{FA})
\begin{align*}
\rho_{\lambda}^N(x,y) = \rho_{\lambda}^N(y,x) = \left(B^{\lambda}_N\right)^{-1} g_{1,N}^{\lambda}(x)g_{2,N}^{\lambda}(y), ~~ x,y\in[0,1], x\leq y, \\
\rho_{\lambda}^D(x,y) = \rho_{\lambda}^D(y,x) = \left(B^{\lambda}_D\right)^{-1}g_{1,D}^{\lambda}(x)g_{2,D}^{\lambda}(y), ~~ x,y\in[0,1], x\leq y, 
\end{align*}
where $B^{\lambda}_N,B^{\lambda}_D$ are non-vanishing constants and  the mappings $g_{1,N}^{\lambda}, g_{2,N}^{\lambda}, g_{1,D}^{\lambda}, g_{2,D}^{\lambda}$ are eigenfunctions of $\Delta_{\mu}$ with appropriate boundary conditions (see \cite[Remark 5.2]{FA}). We prove that the resolvent density is Lipschitz. 

\begin{satz}\label{resolvent_density_lipschitz}
Let $\lambda>0$. Then, for every $\lambda>0$ there exists a constant $L_{\lambda}\geq 0$ such that
\begin{align*}
\left| \rho_{\lambda}^b(x,y)-\rho_{\lambda}^b(x,z)\right|\leq L_{\lambda}|y-z|, ~~~ x,y,z\in[0,1].
\end{align*}
\end{satz}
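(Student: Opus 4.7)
The plan is to exploit the factored representation $\rho_{\lambda}^{b}(x,y)=\left(B_{\lambda}^{b}\right)^{-1}g_{1,b}^{\lambda}(\min(x,y))\,g_{2,b}^{\lambda}(\max(x,y))$ and show that each factor $g_{i,b}^{\lambda}$ is itself Lipschitz continuous on $[0,1]$. Since both factors are (by \cite[Remark 5.2]{FA}) eigenfunctions of $\Delta_{\mu}$ with $\Delta_{\mu} g_{i,b}^{\lambda}=\lambda g_{i,b}^{\lambda}$, they belong to $\mathcal{D}_{\mu}^{2}$ and therefore admit the representation
\begin{align*}
\left(g_{i,b}^{\lambda}\right)'(x) \;=\; \left(g_{i,b}^{\lambda}\right)'(0) \;+\; \lambda\int_{0}^{x} g_{i,b}^{\lambda}(y)\,d\mu(y).
\end{align*}
Because $g_{i,b}^{\lambda}$ is continuous on the compact interval $[0,1]$ and $\mu$ is a finite measure, the right-hand side is bounded on $[0,1]$, so $\left(g_{i,b}^{\lambda}\right)'$ is bounded by some $K_{\lambda}<\infty$. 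Hence each $g_{i,b}^{\lambda}$ is Lipschitz with constant $K_{\lambda}$.

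Next I would denote $M_{\lambda}\coloneqq\max_{i\in\{1,2\}}\bigl\lVert g_{i,b}^{\lambda}\bigr\rVert_{\infty}$ and set $\widetilde{L}_{\lambda}\coloneqq \bigl|B_{\lambda}^{b}\bigr|^{-1}M_{\lambda}K_{\lambda}$. Fixing $x\in[0,1]$, the Lipschitz estimate for $\rho_{\lambda}^{b}(x,\cdot)$ follows by a case distinction on the position of $y,z$ relative to $x$:
\begin{itemize}
\item For $x\leq y\leq z$ (or $y\leq z\leq x$), one factor is frozen and the inequality reduces to the Lipschitz bound for $g_{2,b}^{\lambda}$ (resp.\ $g_{1,b}^{\lambda}$), giving $\bigl|\rho_{\lambda}^{b}(x,y)-\rho_{\lambda}^{b}(x,z)\bigr|\leq \widetilde{L}_{\lambda}|y-z|$.
\item For $y\leq x\leq z$, one inserts $\rho_{\lambda}^{b}(x,x)$ and applies the triangle inequality together with the two monotone cases; since $|y-x|+|x-z|=|y-z|$, the same constant $\widetilde{L}_{\lambda}$ suffices.
\end{itemize}
The symmetry $\rho_{\lambda}^{b}(x,y)=\rho_{\lambda}^{b}(y,x)$ at the diagonal $y=x$ guarantees that the two pieces glue continuously, which is what makes the triangle-inequality step work.

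Setting $L_{\lambda}\coloneqq \widetilde{L}_{\lambda}$ then yields the claimed uniform bound for all $x,y,z\in[0,1]$. The only non-routine point is verifying the boundedness of $\bigl(g_{i,b}^{\lambda}\bigr)'$; I expect this to be straightforward once the defining identity of $\mathcal{D}_{\mu}^{2}$ is invoked, since $\mu([0,1])=1$ reduces the integral term to an $L^{1}(\mu)$-bound on $\lambda g_{i,b}^{\lambda}$ that is controlled by $\lambda\bigl\lVert g_{i,b}^{\lambda}\bigr\rVert_{\infty}$. No appeal to the uniform eigenfunction estimate from Theorem \ref{eigenfunction_estimate_theorem} is needed, because $\lambda$ is fixed and the constants are allowed to depend on it.
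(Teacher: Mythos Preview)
Your proposal is correct and follows essentially the same approach as the paper: exploit the factored form of $\rho_{\lambda}^{b}$, use that the factors $g_{i,b}^{\lambda}$ are Lipschitz and bounded, and handle the positions of $y,z$ relative to $x$ by a case distinction with the triangle inequality at the crossing. The only difference is that you supply an explicit argument for the Lipschitz continuity of $g_{i,b}^{\lambda}$ via boundedness of $\left(g_{i,b}^{\lambda}\right)'$ from the $\mathcal{D}_{\mu}^{2}$ representation, whereas the paper simply asserts this and names the constant.
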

\begin{proof}
Let $b\in\{N,D\}$. We denote the maximum of the Lipschitz constants of the functions  $g_{1,b}^{\lambda}, g_{2,b}^{\lambda}$ (according to the amount) by $L_{\lambda}'$ and $\max\left\{\left\lVert g_{1,b}^{\lambda}\right\rVert_{\infty},\left\lVert g_{2,b}^{\lambda}\right\rVert_{\infty}\right\}$ by $L_{\lambda}''$. Now, let $x\in[0,1].$ For $y,z\in[x,1]$ we have
\begin{align*}
\left| \rho_{\lambda}^b(x,y)-\rho_{\lambda}^b(x,z)\right|= \left| \left(B^{\lambda}_b\right)^{-1}\right|\left|g_{1,b}^{\lambda}(x)\left(g_{2,b}^{\lambda}(y)-g_{2,b}^{\lambda}(z)\right)\right|\leq \left| \left(B^{\lambda}_b\right)^{-1}\right| L_{\lambda}^{'}L_{\lambda}^{''}|y-z|.
\end{align*}
From the symmetry we get the same for $y,z\in[0,x]$. For $0\leq z\leq x\leq y\leq 1$ we have
\begin{align*}
\left| \rho_{\lambda}^b(x,y)-\rho_{\lambda}^b(x,z)\right|&=\left| \left(B^{\lambda}_b\right)^{-1}\right|\left|g_{1,b}^{\lambda}(x)g_{2,b}^{\lambda}(y)-g_{1,b}^{\lambda}(z)g_{2,b}^{\lambda}(x)\right|\\
&\leq 
\left| \left(B^{\lambda}_b\right)^{-1}\right|\Bigg(\left|g_{1,b}^{\lambda}(x)g_{2,b}^{\lambda}(y)-g_{1,b}^{\lambda}(x)g_{2,b}^{\lambda}(x)\right|\\
&~~~~~~~~~~~~~~~~~~+
\left|g_{1,b}^{\lambda}(x)g_{2,b}^{\lambda}(x)-g_{1,b}^{\lambda}(z)g_{2,b}^{\lambda}(x)\right| \Bigg) \\
&\leq \left| \left(B^{\lambda}_b\right)^{-1}\right|L_{\lambda}^{'}L_{\lambda}^{''}\left(|y-x|+|x-z|\right) \\ &= \left| \left(B^{\lambda}_b\right)^{-1}\right|L_{\lambda}^{'}L_{\lambda}^{''}|y-z|
\end{align*}
and, again, the symmetry implies the same for $0\leq y\leq x\leq z\leq 1$.
\end{proof}

\subsection{Approximation of the Resolvent Density}
We develop a method to approximate the delta functional on Cantor-like sets, in particular to approximate the just introduced resolvent density, which will then again (dann wiederrum) be used to approximate point evaluations of heat kernels.\par 
For $n\geq 1$ let $\Lambda_n$ be the partition of the word space $\mathbb{W}$ be defined by
\begin{align*}
\Lambda_n=\{\omega=\omega_1...\omega_m\in \mathbb{W}^*:
 r_{\omega_1}\cdots r_{\omega_{m-1}}>r_{\max}^{n}\geq r_{\omega}\},
\end{align*}
where $r_{\max}\coloneqq\max_{i=1,...,N}r_i$.
Moreover, let $\nu_i=\frac{\mu_i}{r_i^{d_H}}, 1\leq i\leq N$, where $d_H$ is the Hausdorff dimension of $F$. Further, for $\omega\in\mathbb{W}$ we denote $S_{\omega}(F)$ by $F_{\omega}$.
\begin{lem}\label{partitionslemma}
It holds for $n\in\mathbb{N}$:
\begin{enumerate}[label=(\roman*)]
\item \label{partitionslemma_1} $|\Lambda_n|<\infty$ and $\bigcup_{\omega\in \Lambda_n}F_{\omega}=F.$
\item \label{partitionslemma_2} For $\omega\in \Lambda_n$ there exists a subset $\Lambda'\subseteq\Lambda_{n+1}$ such that $F_{\omega}=\bigcup_{\nu\in\Lambda'}F_{\nu}.$
\item \label{partitionslemma_3} For $\omega,\nu\in\Lambda_n$, $\omega\neq\nu$ it holds $|F_{\omega}\cap F_{\nu}|\leq 1.$
\item \label{partitionslemma_estimate} For $\omega\in\Lambda_n$ it holds $\mu(F_{\omega})>r_{\max}^{nd_H}r_{\min}^{d_H}\nu_{\min}^n$.
\item \label{partitionslemma_5} For $w\in\mathbb{W}^*$ there exists $n\in\mathbb{N}$ such that $w\in\Lambda_n$. Consequently, for all $m\geq n$ there exists $\Lambda'_m\subseteq \Lambda_m$ such that $F_w=\cup_{\nu\in\Lambda'_m}F_{\nu}$.
\end{enumerate}
\end{lem}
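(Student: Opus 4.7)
The central structural fact is that $\Lambda_n$ is genuinely a partition in the sense $\mathbb{W}=\bigcup_{\omega\in\Lambda_n}\Sigma_\omega$ with the $\Sigma_\omega$ pairwise disjoint. Indeed, for each $\theta\in\mathbb{W}$ the partial products $r_{\theta_1}\cdots r_{\theta_m}$ strictly decrease to $0$, so there is a unique $m=m(\theta)$ with $r_{\theta_1}\cdots r_{\theta_{m-1}}>r_{\max}^{n}\geq r_{\theta_1}\cdots r_{\theta_m}$, and $\theta_1\ldots\theta_m$ is then the unique element of $\Lambda_n$ that is a prefix of $\theta$. For (i), finiteness of $\Lambda_n$ follows from the length bound $|\omega|\leq n$, obtained by combining $r_{\omega_1}\cdots r_{\omega_{|\omega|-1}}>r_{\max}^{n}$ with $r_{\omega_1}\cdots r_{\omega_{|\omega|-1}}\leq r_{\max}^{|\omega|-1}$; the covering identity $\bigcup_{\omega\in\Lambda_n}F_\omega=F$ then follows by applying the continuous surjection $\pi$ from the word space onto $F$ to the decomposition above.

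For (ii), given $\omega\in\Lambda_n$, I take $\Lambda'\coloneqq\{\omega'\in\Lambda_{n+1}:\omega\text{ is a prefix of }\omega'\}$. Any proper prefix of $\omega$ has contraction product strictly greater than $r_{\max}^{n}\geq r_{\max}^{n+1}$ and therefore cannot lie in $\Lambda_{n+1}$; hence the unique $\Lambda_{n+1}$-prefix of any $\theta\in\Sigma_\omega$ must extend $\omega$, so $\Sigma_\omega=\bigcup_{\omega'\in\Lambda'}\Sigma_{\omega'}$, whence $F_\omega=\bigcup_{\omega'\in\Lambda'}F_{\omega'}$. For (iii), the partition property forbids one of $\omega\neq\nu\in\Lambda_n$ from being a prefix of the other, so there is a largest common prefix of some length $k\geq 0$ with $\omega_{k+1}\neq\nu_{k+1}$; then $F_\omega\subseteq S_{\omega_1\ldots\omega_{k+1}}(F)$ and $F_\nu\subseteq S_{\nu_1\ldots\nu_{k+1}}(F)$, and the ordering $S_1(0)<S_1(1)\leq S_2(0)<\ldots<S_N(1)=1$ implies by induction on length that two such images of $F$ with distinct last letter meet in at most one point. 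For the first assertion of (v), given $w\in\mathbb{W}^*$ the choice $n\coloneqq\lfloor\log r_w/\log r_{\max}\rfloor$ gives $r_{\max}^{n}\geq r_w$ and $r_{\max}^{n+1}<r_w$, whence $r_{w_1}\cdots r_{w_{|w|-1}}=r_w/r_{w_{|w|}}\geq r_w/r_{\max}>r_{\max}^{n}$, so $w\in\Lambda_n$; the existence of $\Lambda'_m\subseteq\Lambda_m$ for all $m\geq n$ then follows by iterating (ii).

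The main work lies in (iv). The self-similarity relation \eqref{self_similarity}, combined with the fact from (iii) that distinct cells $F_\omega,F_\nu$ meet in at most one point, which is a $\mu$-nullset since $\mu$ is non-atomic, gives $\mu(F_\omega)=\mu_\omega\coloneqq\mu_{\omega_1}\cdots\mu_{\omega_{|\omega|}}$. Writing $\mu_i=\nu_i r_i^{d_H}$ factorises this as $\mu_\omega=\nu_\omega r_\omega^{d_H}$ with $\nu_\omega\coloneqq\nu_{\omega_1}\cdots\nu_{\omega_{|\omega|}}$. From $r_{\omega_1}\cdots r_{\omega_{|\omega|-1}}>r_{\max}^{n}$ and $r_{\omega_{|\omega|}}\geq r_{\min}$ one reads off $r_\omega>r_{\max}^{n}r_{\min}$, hence $r_\omega^{d_H}>r_{\max}^{nd_H}r_{\min}^{d_H}$. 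The delicate step, which I expect to be the main obstacle, is the lower bound $\nu_\omega\geq\nu_{\min}^{n}$: this combines the length bound $|\omega|\leq n$ from (i) with the observation $\nu_{\min}\leq 1$, which one obtains by comparing the Moran identity $\sum_i r_i^{d_H}=1$ with $\sum_i\mu_i=\sum_i\nu_i r_i^{d_H}=1$ and concluding $\min_i\nu_i\leq 1\leq\max_i\nu_i$. Multiplying the two estimates yields the claimed strict lower bound.
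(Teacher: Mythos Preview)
Your proof is correct and follows essentially the same route as the paper's: both establish the partition structure of $\Lambda_n$, derive the length bound $|\omega|\leq n$, use the factorisation $\mu_\omega=\nu_\omega r_\omega^{d_H}$ together with $r_\omega>r_{\max}^n r_{\min}$ and $\nu_{\min}\leq 1$ for (iv), and handle (ii) and (v) via the refinement structure of the partitions. Your write-up is in fact slightly more complete than the paper's, since you explicitly justify $\mu(F_\omega)=\mu_\omega$ via non-atomicity and derive $\nu_{\min}\leq 1$ from the Moran equation, whereas the paper treats these as understood.
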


If the measure $\mu$ is chosen as $\mu_i=r_i^{d_H}$ and thus $\nu_i = 1,~ i=1,...,N$, we get an estimate similar to \cite[Lemma 3.5(iv)]{HYC}. Note that these ideas can be used to generalize the corresponding results in \cite{HYC}.
\begin{proof}
\begin{enumerate}[label=(\roman*)]
\item\label{partitionslemma_i} The first claim is obvious. For the second we note that $\cup_{w\in\mathbb{W}}F_w=F$ and that $\cup_{w\in\Lambda_n}\Sigma_w=\mathbb{W}$ and thus $\cup_{v\in\Sigma_w, w\in\Lambda_n}F_v=F.$ It remains to show that $F_w=\cup_{v\in\Sigma_w}F_v$ for $w\in\Lambda_n$. This follows from applying $f_w$ to both sides of the equation $\cup_{\nu\in\mathbb{W}}F_{\nu}=F$.
\item\label{partitionslemma_ii} Let $\omega\in\Lambda_n$. We know from part \ref{partitionslemma_i} that $F_w=\cup_{\nu\in \Sigma_w}F_{\nu}$. If $r_w\leq r_{\max}^{n+1}$, the assertion follows since we can choose $\Lambda'=\{w\}$. Now, we assume $r_w> r_{\max}^{n+1}$. Then it holds for $i=1,...,N$ $r_{\omega}r_i\leq r_{\max}^{n+1}$, since $r_{\omega}\leq r_{\max}^n$. It follows $wi\in\Lambda_{n+1}$ for $i=1,...,N$. We get the result by using this and applying $f_w$ to both sides of equality \eqref{definition_set}.
\item Since $\omega\neq\nu$, there exists an $m\leq\min\{|\omega|,|\nu|\}$ such that $\omega_m\neq\nu_m$. From $|\text{Im}(f_i)\cap \text{Im}(f_j)|\leq 1$ for $1\leq i\neq j\leq N$ it follows
\begin{align*}
|f_{\omega_m}\circ f_{\omega_{m+1}}\circ\cdots\circ f_{\omega_{|\omega|}}(F)\cap
f_{\nu_m}\circ f_{\nu_{m+1}}\circ\cdots\circ f_{\nu_{|\nu|}}(F)|\leq 1.
\end{align*}
The assertion follows by composing with the respective maps $f_{\omega_{m-1}},...,f_{\omega_{1}},f_{\nu_{m-1}},f_{\nu_{1}}$ and using the injectivity if $\omega_i=\nu_i$ and the disjointness of the images, except at most one point if $\omega_i\neq\nu_i$  for $i<m$.\par 
\item Let $\omega\in\Lambda_n$ and $m\coloneqq|\omega|.$ By definition of $\Lambda_n$ it holds $r_{\omega_1}\cdots r_{\omega_{m-1}}>r_{\max}^{n}$ and therefore $r_{\omega}>r_{\max}^{n}r_{\min}$. By using that,
\begin{align*}
\mu_{\omega}&=r_{\omega_1}^{d_H}\frac{\mu_{\omega_1}}{r_{\omega_1}^{d_H}}\cdots r_{\omega_m}^{d_H}\frac{\mu_{\omega_m}}{r_{\omega_m}^{d_H}}\\
&\geq r_{\omega}^{d_H}\nu_{\min}^m\\
&> r_{\max}^{nd_H}r_{\min}^{d_H}\nu_{\min}^m\\
&\geq r_{\max}^{nd_H}r_{\min}^{d_H}\nu_{\min}^n.
\end{align*}
The last inequality follows from $m\leq n$ and $\nu_{\min}\leq 1$.
\item Let $w=w_1....w_m\in\mathbb{W}^*$. Choose $n\in\mathbb{N}$ such that $r_w\leq r_{\max}^n$ and $r_w> r_{\max}^{n+1}$. From $r_{w_1}...r_{w_{m-1}}r_{\max}>r_{w_1}...r_{w_{m}}$ it follows 
\[r_{w_1}...r_{w_{m-1}}>r_{w_1}...r_{w_{m}}r_{\max}^{-1}>r_{\max}^{n+1}r_{\max}^{-1}=r_{\max}^{n}.\]
Therefore, we can find an $n\in\mathbb{N}$ such that $w\in\Lambda_{n}$. For the second part, we can argue as in \ref{partitionslemma_ii} with induction. 
\end{enumerate}
\end{proof}

We introduce a sequence of functions approximating the Delta functional. Hereby, we use the notation of \cite{HYC}.  We prepare this definition by defining the $n$-neighbourhood of $x\in F$ for $n\in\mathbb{N}$ by
\begin{align*}
D^0_n(x)\coloneqq \bigcup\{F_w:w\in\Lambda_n,~ x\in F_w\}.
\end{align*}
Note that $D_n^0(x)$ consists of at least one element of $\{F_w, w\in\Lambda_n\}$, which follows from Lemma \ref{partitionslemma}\ref{partitionslemma_1}, and of at most two elements since pairs of these elements intersect in at most one point. From the latter and the definition of $\Lambda_n$ it follows 
\begin{align}
|D_n^0(x)|\leq 2r_{\max}^n.\label{Dn0_diameter}
\end{align}
 With that, we can define the approximating functions for $x\in F$ and $n\geq 1$ by
\begin{align*}
f_n^x=\mu(D_n^0(x))^{-1}\mathds{1}_{D_n^0(x)}.
\end{align*}
From Lemma \ref{partitionslemma}\ref{partitionslemma_estimate} it follows
\begin{align}
||f_n^x||^2_{\mu}=\mu(D_n^0(x))^{-1}\leq r_{\max}^{-nd_H}r_{\min}^{-d_H}\nu_{\min}^{-n}.\label{f_n^x_norm_esimate}
\end{align}
We deduce the following result.
\begin{lem}\label{approximation_result}
Let $x\in F$. It holds $\lim_{n\to\infty}\langle f_n^x,g\rangle_{\mu}=g(x)$ for any continuous $g\in\mH$.
\end{lem}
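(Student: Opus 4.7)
The plan is to unpack the definition of $\langle f_n^x, g\rangle_\mu$ as a $\mu$-average of $g$ over $D_n^0(x)$, show that the neighbourhood $D_n^0(x)$ shrinks to $\{x\}$ in diameter, and conclude via uniform continuity of $g$ on the compact interval $[0,1]$.

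First I would observe that $\mu(D_n^0(x)) > 0$ by Lemma~\ref{partitionslemma}\ref{partitionslemma_estimate} (since $D_n^0(x)$ contains at least one cell $F_w$ with $w\in\Lambda_n$), so the function $f_n^x$ is well-defined, and
\begin{align*}
\langle f_n^x, g\rangle_\mu \;=\; \mu(D_n^0(x))^{-1}\int_{D_n^0(x)} g(y)\,d\mu(y).
\end{align*}
Since $x\in F_w$ for some $w\in\Lambda_n$ (by Lemma~\ref{partitionslemma}\ref{partitionslemma_1}), we have $x\in D_n^0(x)$, and therefore
\begin{align*}
\bigl|\langle f_n^x, g\rangle_\mu - g(x)\bigr| \;=\; \left|\mu(D_n^0(x))^{-1}\int_{D_n^0(x)} \bigl(g(y)-g(x)\bigr)\,d\mu(y)\right| \;\leq\; \sup_{y\in D_n^0(x)} |g(y)-g(x)|.
\end{align*}

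Next I would invoke the diameter estimate \eqref{Dn0_diameter}, namely $|D_n^0(x)|\leq 2r_{\max}^n$, together with $r_{\max}<1$, to conclude that the diameter of $D_n^0(x)$ tends to $0$. Since $g$ is continuous on the compact set $[0,1]$, it is uniformly continuous, and hence
\begin{align*}
\sup_{y\in D_n^0(x)}|g(y)-g(x)| \;\longrightarrow\; 0 \quad \text{as } n\to\infty,
\end{align*}
which gives the assertion. There is no genuine obstacle here: the lemma is essentially a restatement of the Lebesgue differentiation theorem for continuous functions adapted to the cell structure of $F$, and the only inputs needed are that the cells shrink to points and that the approximating functions are probability densities with respect to $\mu$.
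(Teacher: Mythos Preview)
Your proof is correct and follows essentially the same route as the paper: write $\langle f_n^x,g\rangle_\mu$ as a $\mu$-average over $D_n^0(x)$, bound the error by $\sup_{y\in D_n^0(x)}|g(y)-g(x)|$, and use that the diameter of $D_n^0(x)$ shrinks to zero. The only cosmetic differences are that the paper uses pointwise continuity of $g$ at $x$ (which suffices) rather than uniform continuity, and observes directly that $|y-x|\le r_{\max}^n$ for $y\in D_n^0(x)$ (since $y$ lies in a cell of diameter $\le r_{\max}^n$ containing $x$) instead of quoting the diameter bound \eqref{Dn0_diameter}.
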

\begin{proof}
For $n\in\mathbb{N}$ and $\omega\in \Lambda_n$ it holds $|F_{\omega}|\leq r_{\max}^n$ since $|F|\leq 1$ and $r_{\omega}\leq r_{\max}^n$. Therefore, it holds $|y-x|\leq r_{\max}^n$ for $x,y\in D^0_n(x)$. Now, let $x\in F$ and $\varepsilon>0$. Since $g$ is continuous in $x$, there exists $\delta>0$ such that $|g(x)-g(y)|<\varepsilon$ for $y\in[0,1]$ with $|y-x|<\delta$. Choose $n\in\mathbb{N}$ such that $r_{\max}^n<\delta.$ Then, it follows
\begin{align*}
|\langle f_n^x,g\rangle_{\mu}-g(x)|&=\frac{1}{\mu(D_n^0(x))}\Big|\int_{D_n^0(x)}g(y)d\mu(y)-g(x)\Big|\\
&\leq\frac{1}{\mu(D_n^0(x))}\int_{D_n^0(x)}|g(y)-g(x)|d\mu(y)\\
&\leq \frac{1}{\mu(D_n^0(x))}\mu(D_n^0(x))\cdot\varepsilon = \varepsilon.
\end{align*}
\end{proof}

\begin{lem}\label{resolvent_estimate}
Let $x_1,x_2\in F$ and $m,n\geq 1.$ Then,
\begin{align*}
\left|\int_0^1\int_0^1\rho_1^b(y,z)f_m^{x_1}(y)f_n^{x_2}(z)d\mu(y)d\mu(z)-\rho_1^b(x_1,x_2)\right|\leq2 L_1(r_{\max}^n+r_{\max}^m),
\end{align*}
where $L_1$ denotes the Lipschitz constant of $\rho_1^b$. 
\end{lem}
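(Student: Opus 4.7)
The plan is to exploit that each $f_m^{x_1}$ and $f_n^{x_2}$ is a probability density with respect to $\mu$, so that $\rho_1^b(x_1,x_2)$ can be rewritten as the same double integral against the constant integrand $\rho_1^b(x_1,x_2)$. The quantity inside the absolute value then becomes
\begin{align*}
\int_0^1\int_0^1\bigl(\rho_1^b(y,z)-\rho_1^b(x_1,x_2)\bigr)f_m^{x_1}(y)f_n^{x_2}(z)\,d\mu(y)\,d\mu(z),
\end{align*}
and the task reduces to a pointwise estimate of $|\rho_1^b(y,z)-\rho_1^b(x_1,x_2)|$ on the support of $f_m^{x_1}\otimes f_n^{x_2}$.

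The first key step is to split, by inserting $\rho_1^b(x_1,z)$, and apply Proposition \ref{resolvent_density_lipschitz} in each coordinate (the Lipschitz bound in the second argument extends to the first thanks to the symmetry $\rho_1^b(\cdot,\cdot)=\rho_1^b(\cdot,\cdot)^{\mathrm{T}}$). This yields
\begin{align*}
|\rho_1^b(y,z)-\rho_1^b(x_1,x_2)|\leq L_1\bigl(|y-x_1|+|z-x_2|\bigr).
\end{align*}
The second step is the support restriction: if $y\in D_m^0(x_1)$ then both $y$ and $x_1$ lie in $D_m^0(x_1)$, whose diameter is at most $2r_{\max}^m$ by \eqref{Dn0_diameter}, so $|y-x_1|\leq 2r_{\max}^m$; by symmetry $|z-x_2|\leq 2r_{\max}^n$ for $z\in D_n^0(x_2)$.

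Substituting these two bounds into the double integral and using $\int_0^1 f_m^{x_1}\,d\mu=\int_0^1 f_n^{x_2}\,d\mu=1$ gives the claimed estimate $2L_1(r_{\max}^m+r_{\max}^n)$. There is no real obstacle here; the only subtle point is making sure that the Lipschitz property of Proposition \ref{resolvent_density_lipschitz}, stated with the first argument frozen, is invoked in both variables via the symmetry of $\rho_1^b$, and that the trivial diameter bound from \eqref{Dn0_diameter} is applied with $x_1$ (respectively $x_2$) being one of the points inside its own neighbourhood.
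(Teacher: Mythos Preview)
Your proof is correct and follows essentially the same approach as the paper: rewrite the difference as a double integral of $\rho_1^b(y,z)-\rho_1^b(x_1,x_2)$ against the probability densities, split via the intermediate point $\rho_1^b(x_1,z)$, apply the Lipschitz bound (using symmetry for the first variable), and conclude with the diameter estimate \eqref{Dn0_diameter}. The paper carries out exactly these steps in the same order.
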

\begin{proof}
By using the Lipschitz continuity of $\rho_1^b$ and \eqref{Dn0_diameter},
\begin{align*}
&\left|\int_0^1\int_0^1\left(\rho_1^b(y,z)-\rho_1^b(x_1,x_2)\right)f_m^{x_1}(y)f_n^{x_2}(z)d\mu(y)d\mu(z)\right|\\
&\leq \int_0^1\int_0^1\left|\rho_1^b(y,z)-\rho_1^b(x_1,z)\big|+\big|\rho_1^b(x_1,z)-\rho_1^b(x_1,x_2)\right|f_m^{x_1}(y)f_n^{x_2}(z)d\mu(y)d\mu(z)\\
&= \frac{1}{\mu(D_m^0(x_1))\mu(D_n^0(x_2))}\Bigg(\int_{D_m^0(x_1)}\int_{D_n^0(x_2)}\big|\rho_1^b(y,z)-\rho_1^b(x_1,z)\big|\\ &~~~~~~~~~~~~~~~~~~~~~~~~~~~~~~~~~~~~~~~~~~~~~~~~~~~~~~+\big|\rho_1^b(x_1,z)-\rho_1^b(x_1,x_2)\big|
d\mu(y)d\mu(z)\Bigg)\\
&\leq \frac{1}{\mu(D_m^0(x_1))\mu(D_n^0(x_2))}\int_{D_m^0(x_1)}\int_{D_n^0(x_2)}2L_1 \left(r_{\max}^m+r_{\max}^n\right)d\mu(y)d\mu(z)\\
&= 2L_1 \left(r_{\max}^m+r_{\max}^n\right).
\end{align*}
\end{proof}
\par

\subsection{Wave Equations Defined by Fractal Laplacians}
We introduce the notion of a wave propagator in this section which will be used to define the concept of a mild solution to \eqref{spde_wave_intro}.
 Let $T>0$, $b\in\{N,D\}$, $u_0=\sum_{k\geq 1}u_{0,k}^b\varphi_k^b\in\mathcal{D}\left(\Delta_{\mu}^b\right)$ and $u_1=\sum_{k\geq 1}u_{1,k}^b\varphi_k^b\in\mathcal{D}\left(\left(-\Delta_{\mu}^b\right)^{\frac{1}{2}}\right)$, that is $\sum_{k\geq 1}\left(\lambda_k^b\right)^2\left(u_{0,k}^b\right)^2<\infty$ and $\sum_{k\geq 1}\lambda_k^b\left(u_{1,k}^b\right)^2<\infty$.  $\Delta_{\mu}^b$ is a self-adjoint, dissipative operator on $\mathcal{H}$. Hence, it is well-known (compare, e.g., \cite{SA})  that the wave equation 
\begin{align}\label{wave_equation_1}
\begin{cases}
\frac{\partial^2}{\partial t^2}u(t,x)&=-\Delta_{\mu}^N u(t,x),\\
u(0,x)&=u_0(x),\\
\frac{\partial u(0,x)}{\partial t}&=u_1(x)
\end{cases}
\end{align}
on $[0,T]\times[0,1]$ has a unique solution given by
\begin{align*}
u(t,x)=\int_0^1 P_b(t,x,y)u_1(y)d\mu(y)+\frac{\partial}{\partial t}\int_0^1 P_b(t,x,y)u_0(y)d\mu(y), ~~(t,x)\in[0,T]\times[0,1],
\end{align*}
where $P_b$ is the so-called wave propagator (see \cite[Chapter 5]{DSF}), defined by
\begin{align*}
P_D(t,x,y)=\sum_{k\geq 1}\frac{\sin\left(\sqrt{\lambda_k^D}t\right)}{\sqrt{\lambda_k^D}}\varphi_k^D(x)\varphi_k^D(y)
\end{align*}
and
\begin{align*}
P_N(t,x,y)=t+\sum_{k\geq 2}\frac{\sin\left(\sqrt{\lambda_k^N}t\right)}{\sqrt{\lambda_k^N}}\varphi_k^N(x)\varphi_k^N(y),
\end{align*}
respectively. Note that,
\begin{align*}
\int_0^1P_N(t,x,y)u_0(y)d\mu(y)&=tu_{1,k}^D+\sum_{k\geq 2}\frac{\sin\left(\sqrt{\lambda_k^N}t\right)}{\sqrt{\lambda_k^N}}u_{1,k}^N\varphi_k^N(x),\\
\frac{\partial}{\partial t}\int_0^1P_N(t,x,y)u_0(y)d\mu(y)&=u_{1,k}^N+\sum_{k\geq 2}\cos\left(\sqrt{\lambda_k^N}t\right)u_{0,k}^N\varphi_k^N(x),
\\
\int_0^1P_D(t,x,y)u_1(y)d\mu(y)&=\sum_{k\geq 1}\frac{\sin\left(\sqrt{\lambda_k^D}t\right)}{\sqrt{\lambda_k^D}}u_{1,k}^D\varphi_k^D(x),\\
\frac{\partial}{\partial t}\int_0^1P_D(t,x,y)u_0(y)d\mu(y)&=\sum_{k\geq 1}\cos\left(\sqrt{\lambda_k^D}t\right)u_{0,k}^D\varphi_k^D(x).
\end{align*}
Recall that $\gamma$ is the spectral exponent of $-\Delta_{\mu}^b$ (compare \eqref{spectral_exponent}) and $\delta\coloneqq \max_{1\leq i\leq N}\frac{\log \mu_i}{\log\left((\mu_ir_i)^{\gamma}\right)}$.
\begin{lem}[Properties of the wave propagator] \label{propagator_prop}
\item Let $\delta+1<\frac{1}{\gamma}$. Then, there exists a constant $C_3>0$ such that
\begin{align*}
\sup_{t\in[0,\infty)}\sup_{x\in[0,1]}\lVert P_b(t,x,\cdot)\rVert_{\mu}<C_3.
\end{align*}
\end{lem}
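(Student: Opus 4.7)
The natural approach is to exploit Parseval's identity. Because $\{\varphi_k^b\}_{k\geq 1}$ is an orthonormal basis of $\mathcal{H}=L^2([0,1],\mu)$ and the series defining $P_b(t,x,\cdot)$ is the Fourier expansion with coefficients $a_k(t,x)=\sin(\sqrt{\lambda_k^b}\,t)\,\lambda_k^{b\,-1/2}\varphi_k^b(x)$, I would simply compute
\begin{align*}
\bigl\lVert P_b(t,x,\cdot)\bigr\rVert_\mu^2 \;=\; \sum_{k\geq 1}\frac{\sin^2\!\bigl(\sqrt{\lambda_k^b}\,t\bigr)}{\lambda_k^b}\,\varphi_k^b(x)^2
\end{align*}
(with the $k=1$ term replaced by $t^2\varphi_1^N(x)^2$ in the Neumann case, which should either be absorbed by interpreting the supremum over a bounded time interval or by separating the harmonic mode). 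The point is that all $t$-dependence is concentrated in the harmless factor $\sin^2(\sqrt{\lambda_k^b}\,t)\le 1$, so passing to the sup in $t$ costs nothing.

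Next, I would plug in the two ingredients established earlier. By the spectral lower bound \eqref{eigenwertabsch}, $\lambda_k^b\ge C_0 k^{1/\gamma}$, and by the uniform eigenfunction estimate \eqref{eigenfunction_estimate}, $\varphi_k^b(x)^2\le C_2^2 k^{\delta}$ uniformly in $x\in[0,1]$. Combining these gives, for every $(t,x)\in[0,\infty)\times[0,1]$ (resp.\ $[0,T]\times[0,1]$ in the Neumann case, to absorb the $t\varphi_1^N$ term into a single constant),
\begin{align*}
\bigl\lVert P_b(t,x,\cdot)\bigr\rVert_\mu^2 \;\le\; \frac{C_2^2}{C_0}\sum_{k\geq 1} k^{\delta-\tfrac{1}{\gamma}}.
\end{align*}
This series converges precisely when $\delta-\tfrac{1}{\gamma}<-1$, i.e.\ when $\delta+1<\tfrac{1}{\gamma}$, which is exactly hypothesis~\ref{hypoi}. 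Setting $C_3^2$ equal to the resulting finite sum then yields the claim.

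The only delicate point is the combined use of the eigenvalue and eigenfunction bounds, which is where Theorem \ref{eigenfunction_estimate_theorem} pays off: the trivial exponential-in-$k$ bound of \cite{FLZ,AM} would make even the $k$-sum diverge for every admissible $\gamma$, while the polynomial bound $k^{\delta/2}$ meshes perfectly with the Weyl-type growth $k^{1/\gamma}$ to produce a $p$-series whose convergence threshold coincides with the structural assumption on $\mu$. Apart from this, the argument is a termwise estimate and requires no further ideas; uniformity in $x$ is automatic from the uniform norm estimate, and uniformity in $t$ is automatic from $|\sin|\le 1$.
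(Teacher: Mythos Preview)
Your argument is correct and essentially identical to the paper's: compute $\lVert P_b(t,x,\cdot)\rVert_\mu^2$ via Parseval, bound $\sin^2\le 1$, and combine the eigenvalue lower bound \eqref{eigenwertabsch} with the eigenfunction estimate \eqref{eigenfunction_estimate} to obtain the convergent series $\sum_k k^{\delta-1/\gamma}$. You are in fact slightly more careful than the paper, which silently replaces the Neumann zero-mode contribution $t^2$ by $T^2$ (so the stated supremum over $[0,\infty)$ is really only valid on bounded time intervals in the Neumann case, exactly as you flag).
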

\begin{proof}
By Proposition \ref{eigenfunction_estimate_theorem} and \eqref{eigenwertabsch}, for all $t\geq 0$ and $x\in[0,1]$
\begin{align*}
\left\lVert \sum_{k\geq 2}\frac{\sin\sqrt{\lambda_k}t}{\sqrt{\lambda_k}}\varphi_k(x)\varphi_k(\cdot)\right\rVert_{\mu}^2=&
t^2+\sum_{k\geq 2}\frac{\sin^2\sqrt{\lambda_k}t}{\lambda_k}\varphi_k^2(x)\\
&\leq T^2+C_2\sum_{k\in\mathbb{N}}\frac{1}{\lambda_k}k^{\delta}\\
&\leq T^2+ C_1C_2 \sum_{k\in\mathbb{N}} k^{\delta-\frac{1}{\gamma}},
\end{align*}
which is finite independently of $x\in[0,1]$ and $t\in[0,T]$ due to the assumption.
\end{proof}

\section{Analysis of Stochastic Wave Equations}\label{section_3}
\subsection{Preliminaries}

Let $(\Omega,\mathcal{F},\mathbb{F}\coloneqq (\mathcal{F}_t)_{t\geq 0},\mathbb{P})$ be a filtered probability space statisfying the usual conditions. We consider the stochastic PDE 
\begin{align}
\begin{split}
\frac{\partial^2}{\partial t^2}u(t,x)&=\Delta_{\mu}^b u(t,x)+f(t,u(t,x))\xi(t,x)\label{spde1}\\
u(0,x)&=u_0(x),\\
\frac{\partial}{\partial t}u(0,x)&=u_1(x)
\end{split}
\end{align}
for $(t,x)\in[0,T]\times [0,1]$, where $T>0$, $b\in\{N,D\}$ determines the boundary conditions, $u_0,u_1:[0,1]\to\mathbb{R}$, $f:\Omega\times[0,T]\times[0,1]\to\mathbb{R}$ and $\xi$ is a $\mathbb{F}$-space-time white noise on $([0,1],\mu)$, that is a mean-zero set-indexed Gaussian process on $\mathcal{B}\left([0,T]\times[0,1]\right)$ such that $\mathbb{E}\left[\xi(A)\xi(B)\right]=\left|A\cap B\right|$ (compare \cite[Chapter 1]{WI}). Moreover, let for a time interval $I\subseteq[0,T]$ and a space interval $J\subseteq[0,\infty)$ $\mathcal{P}_{I,J}$ be the $\sigma$-algebra generated by simple functions on $\Omega\times I\times J$, where a simple function on $\Omega\times I \times J$ is defined as a finite sum of functions $h:\Omega\times I\times J\to\mathbb{R}$ of the form
\begin{align*}
h(\omega,t,x)=X(\omega)\mathds{1}_{(a,b]}(t)\mathds{1}_{B}(x), ~~ (\omega,t,x)\in \Omega\times I\times J
\end{align*}
with $X$ bounded and $\mathcal{F}_a$-measurable, $a,b\in I,a<b$ and $B\in\mathcal{B}(J)$.
\begin{defi}
Let $q\geq 2, T>0$ be fixed. Let $S_{q,T}$ be the space of $[0,T]\times[0,1]$-indexed processes $v$ being predictable (i.e. measurable from $\mathcal{P_{[0,T],[0,1]}}$ to $\mathcal{B}(\mathbb{R})$ and satisfying
\begin{align*}
||v||_{q,T}\coloneqq \sup_{t\in[0,T]}\sup_{x\in[0,1]}\left(\mathbb{E}|v(t,x)|^q\right)^{\frac{1}{q}}<\infty.
\end{align*}
Furthermore, define $\mathcal{S}_{q,T}$ as the space of equivalence classes of processes in $\mathcal{S}_{q,T}$, where two processes $v_1,v_2$ are equivalent if $v_1(t,x)=v_2(t,x)$ almost surely for all $(t,x)\in[0,T]\times[0,1]$. 
\end{defi}
Note that $\mathcal{S}_{q}$ and $\mathcal{S}_{q,T}$ are Banach spaces. The proof works by using standard arguments, so we skip it here. \par

We define the concept of a solution to \eqref{spde1} which we observe in this paper.

\begin{defi}
A \textbf{mild solution} to the SPDE \eqref{spde1} is defined as a predictable $[0,T]\times[0,1]$-indexed process such that for every $(t,x)\in[0,T]\times[0,1]$ it holds almost surely
\begin{align}
\begin{split}
u(t,x)=\frac{\partial}{\partial t}\int_0^1&P_b(t,x,y)u_0(y)d\mu(y)+\int_0^1P_b(t,x,y)u_1(y)d\mu(y)\\+&\int_0^t\int_0^1P_b(t-s,x,y)f(s,u(s,y))\xi(s,y)d\mu(y)ds,\label{mild_solution_wave}
\end{split}
\end{align}
for $(t,x)\in[0,T]\times[0,1]$, where the last term is a stochastic integral in the sense of \cite[Chapter 2]{WI}. 
\end{defi}

In this chapter, we assume that Condition \ref{hypo} is satisfied for a given $q\geq 2$. Note that since $u_0\in\mathcal{D}\left(-\Delta^{\mu}\right)$, it holds  \[\left|u_{0,k}\right|<C_4 k^{\frac{1}{\gamma}}, k\geq 1, \] where $C_4\coloneqq \left\lVert\left(-\Delta^{\mu}\right)^{\frac{1}{2}}u_0\right\rVert_{\mu}$. Analogously, since $u_1\in\mathcal{D}\left((-\Delta_{\mu}^b)^{\frac{1}{2}}\right)$, we have \[\left|u_{1,k}\right|<C_5 k^{\frac{1}{2\gamma}}, ~ k\geq 1, \]
where $C_5\coloneqq\left\lVert\left(-\Delta^{\mu}\right)^{\frac{1}{2}}u_1\right\rVert_{\mu}$.

\subsection{Existence, Uniqueness and Continuity}
Let $b\in\{N,D\}$. In this section, we prove continuity properties of $v_i, i=1,2,3$, which are defined as follows for $v_0\in\mathcal{S}_{q,T}$ :
\begin{align}
v_1(t,x)&\coloneqq \int_0^t\int_0^1P_b(t-s,x,y)f(s,v_0(s,y))\xi(s,y)d\mu(y)ds,\label{v1}\\
v_2(t,x)&\coloneqq \int_0^1P_b(t,x,y)u_1(y)d\mu(y),\label{v2}\\
v_3(t,x)&\coloneqq \frac{\partial}{\partial t}\int_0^1 P_b(t,x,y)u_0(y)d\mu(y).\label{v3}
\end{align}

We need some preparing lemmas. The following lemma shows how to find upper estimates of functionals of the wave propagator by using the resolvent density.

\begin{lem}\label{estimate_resolvent_lemma}
For all $t\in(0,T]$ and $g\in\mathcal{H}$ it holds
\begin{align*}
\int_0^1\left(\int_0^1 P_b(t,x,y)g(y)d\mu(y)\right)^2d\mu(x)\leq 2t^2 \int_0^1 \rho_1^b(x,y)g(x)g(y)d\mu(x)d\mu(y).
\end{align*}
\begin{proof}
Let $b=N$ and $g=\sum_{k=0}^{\infty}g_k\varphi_k^N$. Then, since the sequence of eigenvalues is increasing,
\begin{align}
\int_0^1\left(\int_0^1 P_N(t-s,x,y)g(y)d\mu(y)\right)^2d\mu(x)&=\left\lVert tg_0+\sum_{k=2}^{\infty}\frac{\sin\left(\sqrt{\lambda_k^N}(t-s)\right)}{\sqrt{\lambda_k^N}}g_k\varphi_k^N \right\rVert^2_{\mu}\notag\\
&=  t^2g_0^2+\sum_{k=2}^{\infty} \frac{\sin^2\left(\sqrt{\lambda_k^N}(t-s)\right)}{\lambda_k^N}g_k^2\notag\\
&\leq t^2\left(g_0^2+\sum_{k=2}^{\infty} \frac{1}{\lambda_k^N}g_k^2\right) \notag\\
&\leq t^2 \frac{1+\lambda_2^N}{\lambda_2^N}\sum_{k=1}^{\infty} \frac{1}{1+\lambda_k^N}g_k^2\notag\\
&= t^2 \frac{1+\lambda_2^N}{\lambda_2^N}\left\langle g,\left(1-\Delta_{\mu}^N\right)^{-1}g\right\rangle_{\mu}.\label{spatial_estimate_resolvent}
\end{align}
By definition of the resolvent density it holds 
\[
\left(1-\Delta_{\mu}^N\right)^{-1}g = \int_0^1\rho_1^N(\cdot,y)g(y)d\mu(y).
\]
Plugging this and the fact that $\lambda_2^N>1$ (see, e.g.,  \cite[Section 3.3.1]{AE}) into \eqref{spatial_estimate_resolvent}, the assertion for $b=N$ follows. The case $b=D$ works similarly using $\lambda_1^D>1$  (see, e.g., \cite[Lemma 4.9]{MS}).
\end{proof}
\end{lem}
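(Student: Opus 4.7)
The plan is to work spectrally in the $L^2([0,1],\mu)$-orthonormal eigenbasis $\{\varphi_k^b\}_{k\geq 1}$ of $-\Delta_\mu^b$. Expanding $g=\sum_{k\geq 1}g_k\varphi_k^b$ with coefficients $g_k=\langle g,\varphi_k^b\rangle_\mu$, substituting the series representation of $P_b$ into the inner integral, and using orthonormality with respect to $\mu$, the inner integral collapses to
\begin{equation*}
\int_0^1 P_b(t,x,y)\,g(y)\,d\mu(y)=\sum_{k\geq 1}\frac{\sin(\sqrt{\lambda_k^b}\,t)}{\sqrt{\lambda_k^b}}\,g_k\,\varphi_k^b(x),
\end{equation*}
where for $b=N$ the $k=1$ summand degenerates to $t\,g_1$ because $\lambda_1^N=0$ and $\varphi_1^N\equiv 1$.

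A second application of Parseval's identity rewrites the left-hand side of the claim as the scalar series $\sum_k \sin^2(\sqrt{\lambda_k^b}\,t)/\lambda_k^b\cdot g_k^2$, with an additional term $t^2 g_1^2$ in the Neumann case. For the right-hand side, $\rho_1^b$ is, by definition in Section~\ref{resolvent_chapter}, the integral kernel of the self-adjoint resolvent $R_1^b\coloneqq(1-\Delta_\mu^b)^{-1}$, whose spectral representation is $R_1^b\varphi_k^b=(1+\lambda_k^b)^{-1}\varphi_k^b$. Consequently Fubini and Parseval give
\begin{equation*}
\int_0^1\!\!\int_0^1\rho_1^b(x,y)\,g(x)g(y)\,d\mu(x)\,d\mu(y)=\langle g,R_1^b g\rangle_\mu=\sum_{k\geq 1}\frac{g_k^2}{1+\lambda_k^b},
\end{equation*}
reducing the claimed inequality to a coefficient-by-coefficient comparison $\sin^2(\sqrt{\lambda_k^b}\,t)/\lambda_k^b\leq 2t^2/(1+\lambda_k^b)$ for the oscillatory modes, plus the trivial bound $t^2 g_1^2\leq 2t^2 g_1^2$ for the Neumann ground mode.

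The main obstacle is calibrating this coefficient bound uniformly in $k$ with the sharp constant~$2$. My plan is to use the crude estimate $\sin^2\leq 1$ together with the algebraic identity $1/\lambda_k^b=\bigl((1+\lambda_k^b)/\lambda_k^b\bigr)\cdot(1+\lambda_k^b)^{-1}$ and the monotonicity of $x\mapsto(1+x)/x$, which lets me dominate the prefactor by its value at the first positive eigenvalue ($\lambda_2^N$ for Neumann, $\lambda_1^D$ for Dirichlet). The cited spectral-gap bounds $\lambda_2^N>1$ and $\lambda_1^D>1$ then force this prefactor strictly below $2$. The only genuinely delicate point is threading the $t^2$ factor through the argument consistently across all frequency ranges; this is precisely why the Neumann ground mode with $\lambda_1^N=0$ must be treated separately from the rest of the spectrum, where the spectral gap does the real work.
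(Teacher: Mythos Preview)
Your approach is exactly the paper's: expand spectrally, apply Parseval on both sides, and compare coefficient by coefficient using $\sin^2\le 1$ together with the spectral-gap facts $\lambda_2^N>1$ and $\lambda_1^D>1$. Writing $\lambda_*$ for the first positive eigenvalue, the chain
\[
\frac{\sin^2(\sqrt{\lambda_k^b}\,t)}{\lambda_k^b}\;\le\;\frac{1}{\lambda_k^b}\;=\;\frac{1+\lambda_k^b}{\lambda_k^b}\cdot\frac{1}{1+\lambda_k^b}\;\le\;\frac{1+\lambda_*}{\lambda_*}\cdot\frac{1}{1+\lambda_k^b}\;<\;\frac{2}{1+\lambda_k^b}
\]
is precisely the mechanism in the paper's proof as well.

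However, the point you flag as ``delicate'' is a genuine gap, and separating out the Neumann ground mode does not close it. The display above produces $2/(1+\lambda_k^b)$, not $2t^2/(1+\lambda_k^b)$; the factor $t^2$ is simply absent on the oscillatory modes. In fact the termwise bound $\sin^2(\sqrt{\lambda_k^b}\,t)/\lambda_k^b\le 2t^2/(1+\lambda_k^b)$ is \emph{false} for small $t$: with $g=\varphi_K^b$, any $K$ with $\lambda_K^b>\pi^2/2-1$, and $t=\pi/(2\sqrt{\lambda_K^b})$, the left side equals $1/\lambda_K^b$ while the right side equals $\pi^2/\bigl(2\lambda_K^b(1+\lambda_K^b)\bigr)<1/\lambda_K^b$. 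The paper's own displayed step $\sum_k \sin^2(\sqrt{\lambda_k}t)\,g_k^2/\lambda_k\le t^2\sum_k g_k^2/\lambda_k$ has the same defect, since it would require $\sin^2(\sqrt{\lambda_k}t)\le t^2$ for every $k$. What both arguments actually establish is the inequality with $2$ (equivalently $2\max(1,t^2)$) in place of $2t^2$; that weaker bound suffices for every downstream use in the paper, where the constant is either integrated over time or absorbed into a $T$-dependent constant.
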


This leads to a useful approximation of $P_b(t,x,\cdot)$ for fixed $(t,x)\in [0,T]\times[0,1]$.

\begin{lem} \label{estimate_resolvent_lemma_2}
Let $x\in F$. Then, there exists a constant $C_6$ such that for all $t\in(0,\infty)$ and $n\in\mathbb{N}$
\begin{align*}
\int_0^1\left(\left\langle P_b(t,\cdot,y),f_n^x\right\rangle - P_b(t,x,y)\right)^2d\mu(y)\leq C_6t^2r_{\max}^{n}.
\end{align*}
\end{lem}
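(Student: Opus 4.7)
The plan is to use the symmetry $P_b(t,z,y)=P_b(t,y,z)$ of the propagator to rewrite the difference as an $L^2$ object, apply Lemma \ref{estimate_resolvent_lemma} with a nice $\mathcal{H}$-valued test function, and then let a second approximation parameter go to infinity to recover the delta-singularity. Define
\[
u_m(y):=\int_0^1 P_b(t,y,z)f_m^x(z)\,d\mu(z), \qquad m\in\mathbb{N},
\]
so that by symmetry $G(y):=\langle P_b(t,\cdot,y),f_n^x\rangle_\mu - P_b(t,x,y)=u_n(y)-P_b(t,x,y)$. The goal is to bound $\lVert u_n-P_b(t,x,\cdot)\rVert_\mu^2$.

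First, I would bound $\lVert u_n-u_m\rVert_\mu^2$ for arbitrary $m\in\mathbb{N}$. Since $f_n^x-f_m^x\in\mathcal{H}$, Lemma \ref{estimate_resolvent_lemma} applied to $g:=f_n^x-f_m^x$ gives
\[
\lVert u_n-u_m\rVert_\mu^2\leq 2t^2\int_0^1\!\!\int_0^1 \rho_1^b(z,z')(f_n^x-f_m^x)(z)(f_n^x-f_m^x)(z')\,d\mu(z)\,d\mu(z').
\]
Expanding the product produces three double integrals of type $\int\!\!\int\rho_1^b f_k^x f_\ell^x\,d\mu d\mu$ with $(k,\ell)\in\{(n,n),(n,m),(m,m)\}$, each lying within $2L_1(r_{\max}^k+r_{\max}^\ell)$ of $\rho_1^b(x,x)$ by Lemma \ref{resolvent_estimate}. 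Since the coefficients of the $\rho_1^b(x,x)$ contributions satisfy $1-2+1=0$, these terms cancel, and one is left with
\[
\lVert u_n-u_m\rVert_\mu^2\leq 16L_1 t^2\bigl(r_{\max}^n+r_{\max}^m\bigr),
\]
where $L_1$ is the Lipschitz constant of $\rho_1^b$ from Proposition \ref{resolvent_density_lipschitz}.

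Second, I would identify the $\mathcal{H}$-limit of $(u_m)$ as $P_b(t,x,\cdot)$. The previous estimate shows $(u_m)$ is Cauchy in $\mathcal{H}$, so it converges to some $u\in\mathcal{H}$. Writing $u_m=\sum_{k}\tfrac{\sin(\sqrt{\lambda_k^b}t)}{\sqrt{\lambda_k^b}}\langle\varphi_k^b,f_m^x\rangle_\mu \varphi_k^b$ (with the standard modification of the $k=1$ term when $b=N$; note that for $b=N$ one has $\varphi_1^N\equiv 1$ and thus $\langle\varphi_1^N,f_m^x\rangle_\mu=1=\varphi_1^N(x)$ for every $m$), continuity of each eigenfunction $\varphi_k^b$ together with Lemma \ref{approximation_result} yields $\langle\varphi_k^b,f_m^x\rangle_\mu\to\varphi_k^b(x)$ as $m\to\infty$. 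Under Assumption \ref{hypo}\ref{hypoi}, the series $P_b(t,x,\cdot)=\sum_k \tfrac{\sin(\sqrt{\lambda_k^b}t)}{\sqrt{\lambda_k^b}}\varphi_k^b(x)\varphi_k^b$ converges in $\mathcal{H}$ (as in the proof of Lemma \ref{propagator_prop}), and $\mathcal{H}$-convergence of $u_m$ implies that each Fourier coefficient of $u$ equals the pointwise limit of the corresponding coefficient of $u_m$. Hence $u=P_b(t,x,\cdot)$ in $\mathcal{H}$.

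Letting $m\to\infty$ in the bound for $\lVert u_n-u_m\rVert_\mu^2$ then yields
\[
\int_0^1 G(y)^2\,d\mu(y)=\lVert u_n-P_b(t,x,\cdot)\rVert_\mu^2\leq 16 L_1 t^2 r_{\max}^n,
\]
so the claim holds with $C_6:=16L_1$. The main technical point is the identification step: $P_b(t,x,\cdot)$ is only defined via an $L^2$-convergent series and need not be a pointwise function, so matching it to the Cauchy limit $u$ must be done on the level of Fourier coefficients, using continuity of the eigenfunctions so that Lemma \ref{approximation_result} applies.
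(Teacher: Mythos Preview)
Your proof is correct and follows essentially the same route as the paper: introduce a second approximation level $m$, apply Lemma~\ref{estimate_resolvent_lemma} to $g=f_n^x-f_m^x$, reduce to the resolvent density via Lemma~\ref{resolvent_estimate}, and then send $m\to\infty$ to obtain $C_6=16L_1$. The only cosmetic difference is that the paper passes to the limit termwise in the eigenfunction expansion using Fatou's lemma, whereas you identify the $\mathcal{H}$-limit of the Cauchy sequence $(u_m)$ via its Fourier coefficients; both arguments are equivalent here.
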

\begin{proof}
Let $b=N$. For any $(t,y)\in[0,\infty)\times[0,1]$ $P_N(t,\cdot,y)$ is an element of $\mathcal{H}$ and the inner product on $\mathcal{H}$ is continuous in each argument. It thus holds for any $g\in\mathcal{H}$
\begin{align}
\left\langle P_N(t,\cdot,y),g\right\rangle_{\mu}&=
\left\langle t+\sum_{k=2}^{\infty}\frac{\sin\left(\sqrt{\lambda_k^N}(t)\right)}{\sqrt{\lambda_k^N}}\varphi_k^N(y)\varphi_k^N,g\right\rangle_{\mu}\notag\\
&=\left\langle \lim_{m\to\infty}t+\sum_{k=2}^{m}\frac{\sin\left(\sqrt{\lambda_k^N}(t)\right)}{\sqrt{\lambda_k^N}}\varphi_k^N(y)\varphi_k^N,g\right\rangle_{\mu}\notag\\
&= t\left\langle 1,g\right\rangle_{\mu}+\sum_{k=2}^{\infty}\frac{\sin\left(\sqrt{\lambda_k^N}(t)\right)}{\sqrt{\lambda_k^N}}\varphi_k^N(y)\left\langle\varphi_k^N,g\right\rangle_{\mu}.\label{inner_product_cont}
\end{align}
By using this, Lemma \ref{approximation_result} and Fatou's Lemma,
\begin{align}
&\int_0^1\left( \left\langle P_N(t,\cdot,y),f_n^x\right\rangle - P_N(t,x,y)\right)^2d\mu(y)\notag \\ &=
\int_0^1 \left(t\left\langle 1,f_n^x\right\rangle_{\mu}+\sum_{k=2}^{\infty}\frac{\sin\left(\sqrt{\lambda_k^N}(t)\right)}{\sqrt{\lambda_k^N}}\varphi_k^N(y)\left\langle \varphi_k^N,f_n^x\right\rangle - P_N(t,x,y)\right)^2d\mu(y)\notag \\
&=\int_0^1 \left( t\left(\left\langle 1,f_n^x\right\rangle_{\mu}-1\right)+
\sum_{k=2}^{\infty}\frac{\sin\left(\sqrt{\lambda_k^N}(t)\right)}{\sqrt{\lambda_k^N}}\left[\left\langle \varphi_k^N,f_n^x\right\rangle - \varphi_k^N(x)\right]\varphi_k^N(y)\right)^2d\mu(y)\notag \\
&= \sum_{k=2}^{\infty}\frac{\sin^2\left(\sqrt{\lambda_k^N}(t)\right)}{\lambda_k^N}\left[\left\langle \varphi_k^N,f_n^x\right\rangle - \varphi_k^N(x)\right]^2\label{resolvent_lemma_2_proof} \\
&= \sum_{k=2}^{\infty}\frac{\sin^2\left(\sqrt{\lambda_k^N}(t)\right)}{\lambda_k^N}\left[\left\langle \varphi_k^N,f_n^x\right\rangle - \lim_{m\to\infty}\left\langle\varphi_k^N,f_m^x\right\rangle\right]^2\notag \\
&= \sum_{k=2}^{\infty}\lim_{m\to\infty}\frac{\sin^2\left(\sqrt{\lambda_k^N}(t)\right)}{\lambda_k^N}\left[\left\langle \varphi_k^N,f_n^x\right\rangle - \left\langle\varphi_k^N,f_m^x\right\rangle\right]^2\notag \\
&\leq \liminf_{m\to\infty}\sum_{k=2}^{\infty}\frac{\sin^2\left(\sqrt{\lambda_k^N}(t)\right)}{\lambda_k^N}\left[\left\langle \varphi_k^N,f_n^x\right\rangle - \left\langle\varphi_k^N,f_m^x\right\rangle\right]^2\notag \\
&=\liminf_{m\to\infty}\int_0^1\left(\int_0^1 P_N(t-s,y,z)\left(f_n^x(z)-f_m^x(z)\right)d\mu(z)\right)^2d\mu(y)\notag .
\end{align}
Note that $ \left\langle 1,f_n^x\right\rangle_{\mu}=\int_0^1 f_n^x(y)d\mu(y)=1$, what we have used in equation \eqref{resolvent_lemma_2_proof}. 
By Lemma \ref{estimate_resolvent_lemma},
\begin{align}
&\int_0^1\left(\int_0^1 P_N(t,y,z)(f_n^x(z)-f_m^x(z)d\mu(z))\right)^2d\mu(y)\notag \\ &\leq
2t^2\int_0^1\int_0^1 \rho_1^N(x,y)(f^x_n(y)-f^x_m(y))(f^x_n(z)-f^x_m(z))d\mu(y)d\mu(z)\notag\\
&= 2t^2\left|\int_0^1\int_0^1\rho_1^b(z,y)\left(f_m^x(z)f_m^x(y)-f_m^x(z)f_n^x(y)-f_n^x(z)f_m^x(y)+f_n^x(z)f_n^x(y)\right)d\mu(z)d\mu(y)\right|\notag\\
&= 2t^2\Big|\int_0^1\int_0^1\rho_1^b(z,y)f_m^x(z)f_m^x(y)-\rho_1^b(x,x)-\rho_1^b(z,y)f_m^x(z)f_n^x(y)+\rho_1^b(x,x)\notag\\&~~~-\rho_1^b(z,y)f_n^x(z)f_m^x(y)+\rho_1^b(x,x)+\rho_1^b(z,y)f_n^x(z)f_n^x(y)-\rho_1^b(x,x)d\mu(z)d\mu(y)\Big|\notag\\
&\leq 16L_1t^2(r_{\max}^m+r_{\max}^n)\notag,
\end{align}
where we have used Lemma \eqref{resolvent_estimate} in the last inequality.
We conclude
\begin{align*}
\int_0^1\left( \left\langle P_N(t-s,\cdot,y),f_n^x\right\rangle - P_N(t-s,x,y)\right)^2d\mu(y) &\leq \liminf_{m\to\infty} 16L_1t^2(r_{\max}^n+r_{\max}^m)\\
&= 16L_1t^2r_{\max}^n.
\end{align*}
The case $b=D$ works similarly.
\end{proof}

We need one more estimate to find upper bounds for point evaluations of the wave propagator.
\begin{lem}\label{summenlemma}
Let $a<0, b\geq 0$. Then, there exists a constant $C_{a,b}$ such that for all $t\in[0,\infty)$
\begin{align*}
\sum_{k\in\mathbb{N}} k^{a-1}\wedge tk^{b-1}\leq C_{a,b} t^{\frac{-a}{b-a}}.
\end{align*}
\end{lem}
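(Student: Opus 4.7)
\textbf{Proof plan for Lemma \ref{summenlemma}.} The natural strategy is to locate the crossover between the two terms inside the minimum and split the sum accordingly. Write $a_k \coloneqq k^{a-1}$ and $b_k(t) \coloneqq tk^{b-1}$. Then $a_k \leq b_k(t)$ is equivalent to $k^{a-b} \leq t$, and since $a-b<0$, this holds precisely when $k \geq k_t \coloneqq t^{-1/(b-a)}$. Note that the target exponent satisfies $\alpha \coloneqq -a/(b-a) \in (0,1]$ because $-a>0$ and $b-a \geq -a$.

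First I would dispose of the easy regimes. If $t=0$, both sides are $0$. If $t \geq 1$, then $k_t \leq 1$, so the minimum equals $k^{a-1}$ for every $k \geq 1$; since $a-1<-1$ the series $\sum_k k^{a-1}$ converges to some $C_a<\infty$, and $t^{\alpha} \geq 1$ yields the bound $\sum_k k^{a-1}\wedge tk^{b-1} \leq C_a \leq C_a t^{\alpha}$.

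The main case is $0<t<1$, where $k_t>1$. I would split
\begin{align*}
\sum_{k\in\mathbb{N}} \bigl(k^{a-1}\wedge tk^{b-1}\bigr) \;\leq\; \underbrace{\sum_{1 \leq k \leq k_t} t\,k^{b-1}}_{(I)} \;+\; \underbrace{\sum_{k > k_t} k^{a-1}}_{(II)}.
\end{align*}
For $(II)$, an integral comparison (valid because $x\mapsto x^{a-1}$ is decreasing) gives $(II) \leq \int_{k_t}^{\infty} x^{a-1}\,dx = k_t^{a}/(-a) = t^{-a/(b-a)}/(-a) = t^{\alpha}/(-a)$. For $(I)$ with $b>0$, the function $x\mapsto x^{b-1}$ is either increasing or decreasing but comparable to $\int_0^{k_t+1} x^{b-1}\,dx \leq C\,k_t^{b}$, so that $(I) \leq C\,t\cdot t^{-b/(b-a)} = C\,t^{(b-a-b)/(b-a)} = C\,t^{\alpha}$. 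Adding $(I)$ and $(II)$ produces the claimed bound.

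The single delicate point is the boundary case $b=0$, where $(I)$ behaves like $t\log k_t \asymp t\log(1/t)$ and just barely misses $C\,t^{\alpha} = C\,t$. I expect this to be the main technical nuisance, and I would handle it either by restricting attention to a slightly enlarged constant $C_{a,b}$ absorbing a logarithmic factor via $\log(1/t) \leq C_\varepsilon\,t^{-\varepsilon}$ and replacing $\alpha$ by $\alpha-\varepsilon$ (which is harmless in the applications, where one only needs convergence of a derived integral), or by using the interpolation inequality $u\wedge v \leq u^{1-\theta}v^{\theta}$ with $\theta = \alpha - \varepsilon$ to obtain directly $\sum k^{a-1}\wedge tk^{b-1} \leq t^{\theta}\sum k^{a-1+(b-a)\theta}$, where the chosen $\theta<\alpha$ keeps the exponent of $k$ strictly below $-1$ so the series converges. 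For every $b>0$ the clean statement with exponent $\alpha$ goes through by the split-and-integrate argument above, and this is all that is needed in the sequel.
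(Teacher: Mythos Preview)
The paper does not give its own proof of this lemma; it simply cites \cite[Lemma~5.2]{HYE}. Your split--and--integrate argument is the standard way to prove such an estimate and is correct for every $b>0$: the crossover at $k_t=t^{-1/(b-a)}$, the integral comparison for $(II)$, and the bound $\sum_{k\le k_t}k^{b-1}\le C\,k_t^{\,b}$ for $(I)$ all go through as you indicate.

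Your hesitation at the boundary case $b=0$ is well founded, and in fact the inequality as stated is \emph{false} there. For $b=0$ the target exponent is $\alpha=1$, but the first block of the split gives
\[
(I)=t\sum_{1\le k\le k_t}k^{-1}\sim \frac{t}{-a}\log\frac{1}{t}\qquad (t\to 0^+),
\]
so the sum is of order $t\log(1/t)$ and cannot be bounded by $C_{a,0}\,t$ uniformly in $t\in(0,1)$. A concrete instance is $a=-1$, $b=0$, where $\sum_k (k^{-2}\wedge tk^{-1})\asymp t\log(1/t)$. Thus the hypothesis should really read $b>0$; your two proposed workarounds (interpolation with $\theta<\alpha$, or absorbing a logarithm) recover only the weaker exponent $\alpha-\varepsilon$, which is the best one can do at $b=0$.

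This does not affect the paper's application in Proposition~\ref{v_2_continuity}: the resulting temporal H\"older exponent there is written as $(2-(2+\delta)\gamma)\wedge 1$, and the cap at $1$ is precisely what handles the regime $b\le 0$ (for $b<0$ the sum $\sum_k tk^{b-1}$ already converges, giving exponent~$1$ directly). So your proof covers everything the paper actually needs.
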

\begin{proof}
\cite[Lemma 5.2]{HYE}
\end{proof}

We are now able to prove Hölder continuity propoerties of $v_i,~ i=1,2,3.$ We start with the deterministic ones. 

\begin{satz}\label{v_2_continuity}
Let $T>0$ be fixed. Then, there exists a constant $C_7>0$ such that for all  $i\in\{2,3\},$ $t\in[0,T], x,y\in[0,1]$ $v_i(t,x)$ is well-defined and it holds
\begin{align*}
|v_i(t,x)-v_i(t,y)| &\leq C_7|x-y|,\\
|v_i(s,x)-v_i(t,x)|&\leq C_7|s-t|^{(2-(2+\delta)\gamma)\wedge 1}.
\end{align*}
\end{satz}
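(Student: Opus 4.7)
The plan is to split the analysis into well-definedness, spatial Lipschitz continuity, and temporal Hölder continuity, treating $v_2$ and $v_3$ in parallel and distinguishing $b=N$ from $b=D$ only at the boundary-condition step.

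First I would work with the eigenfunction expansions
\begin{align*}
v_3(t,x)=\sum_{k\geq 1}\cos\!\bigl(\sqrt{\lambda_k^b}\,t\bigr)u_{0,k}^b\varphi_k^b(x),\qquad
v_2(t,x)=\sum_{k\geq 1}\frac{\sin\!\bigl(\sqrt{\lambda_k^b}\,t\bigr)}{\sqrt{\lambda_k^b}}u_{1,k}^b\varphi_k^b(x).
\end{align*}
The Cauchy--Schwarz bounds $|u_{0,k}^b|\leq C\,(\lambda_k^b)^{-1}$ and $|u_{1,k}^b|\leq C\,(\lambda_k^b)^{-1/2}$ (from $u_0\in\mathcal{D}(\Delta_\mu^b)$, $u_1\in\mathcal{D}((-\Delta_\mu^b)^{1/2})$) combined with \eqref{eigenwertabsch} and the uniform bound \eqref{eigenfunction_estimate} reduce absolute convergence to the summability of $\sum_k k^{\delta/2-1/\gamma}$, which is granted by $\delta+1<1/\gamma$. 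This gives well-definedness at every $(t,x)$.

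For the spatial Lipschitz bound I would pass to the functional calculus. Since $\Delta_\mu^b$ is self-adjoint, $\cos(t\sqrt{-\Delta_\mu^b})$ and $\sin(t\sqrt{-\Delta_\mu^b})(-\Delta_\mu^b)^{-1/2}$ commute with $\Delta_\mu^b$ and are bounded on $\mathcal{H}$, so $v_3(t,\cdot),v_2(t,\cdot)\in\mathcal{D}(\Delta_\mu^b)$ with
\begin{align*}
\lVert\Delta_\mu^b v_3(t,\cdot)\rVert_\mu\leq \lVert\Delta_\mu^b u_0\rVert_\mu,\qquad \lVert\Delta_\mu^b v_2(t,\cdot)\rVert_\mu\leq \lVert(-\Delta_\mu^b)^{1/2}u_1\rVert_\mu,
\end{align*}
uniformly in $t\in[0,T]$. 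Letting $\bar v_i(t,\cdot)\in\mathcal{D}_\mu^2$ denote the canonical representative, the defining property of $\mathcal{D}_\mu^2$ gives $\bar v_i'(x)=\bar v_i'(0)+\int_0^x \Delta_\mu^b \bar v_i\,d\mu$, so
\begin{align*}
|\bar v_i'(x)|\leq |\bar v_i'(0)|+\lVert\Delta_\mu^b \bar v_i(t,\cdot)\rVert_1\leq |\bar v_i'(0)|+\lVert\Delta_\mu^b \bar v_i(t,\cdot)\rVert_\mu,
\end{align*}
since $\mu$ is a probability measure. For $b=N$ the domain enforces $\bar v_i'(0)=0$. For $b=D$ I would integrate the previous identity from $0$ to $1$ and use $\bar v_i(1)-\bar v_i(0)=0$ to obtain $\bar v_i'(0)=-\int_0^1\!\int_0^y\Delta_\mu^b \bar v_i\,d\mu\,dy$, which is bounded by $\lVert\Delta_\mu^b \bar v_i\rVert_\mu$. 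In either case $\lVert \bar v_i'(t,\cdot)\rVert_\infty\leq 2\lVert\Delta_\mu^b \bar v_i(t,\cdot)\rVert_\mu$, and integration $\bar v_i(t,x)-\bar v_i(t,y)=\int_y^x\bar v_i'\,d\lambda^1$ delivers the Lipschitz estimate with a constant $C_7$ independent of $t$.

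For the temporal bound I would use the eigenfunction expansions together with
\begin{align*}
\bigl|\cos(\sqrt{\lambda_k^b}s)-\cos(\sqrt{\lambda_k^b}t)\bigr|\leq 2\wedge\sqrt{\lambda_k^b}|s-t|,\quad \tfrac{|\sin(\sqrt{\lambda_k^b}s)-\sin(\sqrt{\lambda_k^b}t)|}{\sqrt{\lambda_k^b}}\leq \tfrac{2}{\sqrt{\lambda_k^b}}\wedge|s-t|,
\end{align*}
the coefficient decay, and \eqref{eigenfunction_estimate}, to reach
\begin{align*}
|v_i(s,x)-v_i(t,x)|\leq C\sum_{k\geq 1}\bigl(k^{\delta/2-1/\gamma}\bigr)\wedge\bigl(k^{\delta/2-1/(2\gamma)}|s-t|\bigr).
\end{align*}
Applying Lemma \ref{summenlemma} with $a=1+\delta/2-1/\gamma$ (negative thanks to $\delta+1<1/\gamma$) and $b=1+\delta/2-1/(2\gamma)$ yields the exponent $-a/(b-a)=2-(2+\delta)\gamma$. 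When $b<0$, equivalently $2-(2+\delta)\gamma>1$, both branches of the $\min$ are absolutely summable and one directly obtains Lipschitz continuity in $t$; this is exactly the regime captured by the $\wedge 1$ in the statement.

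The main obstacle I anticipate is the spatial step: a term-by-term eigenfunction bound, even combined with the derivative estimate $\lVert\varphi_k^{b\,\prime}\rVert_\infty\lesssim \lambda_k^b\lVert\varphi_k^b\rVert_\infty$, yields at best essentially $1/2$-Hölder regularity. Routing the argument through the functional calculus, using that $v_i(t,\cdot)\in\mathcal{D}(\Delta_\mu^b)$ with an $L^2(\mu)$-bounded Laplacian, and exploiting the linear structure of $\mathcal{D}_\mu^2$ at the boundary is what upgrades this to genuine Lipschitz continuity; the temporal part is then mechanical once the coefficient decay and the uniform eigenfunction bound are assembled.
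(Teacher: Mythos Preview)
Your temporal argument is essentially identical to the paper's: both expand in eigenfunctions, use the elementary $2\wedge\sqrt{\lambda_k}|s-t|$ bound on trigonometric increments together with the coefficient decay and the uniform eigenfunction estimate, and feed the result into Lemma~\ref{summenlemma} with $a=1+\delta/2-1/\gamma$, $b=1+\delta/2-1/(2\gamma)$. Your separate handling of the case $b<0$ (where Lemma~\ref{summenlemma} does not apply directly and one gets Lipschitz regularity instead) is a point the paper glosses over.

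Your spatial argument, however, takes a genuinely different route. The paper does not use the functional calculus or the $C^1$ structure of $\mathcal{D}_\mu^2$; instead it bounds $|v_2(t,x)-v_2(t,y)|^2$ by $\sup|u_1|^2\int_0^1(P_b(t,x,z)-P_b(t,y,z))^2\,d\mu(z)$ via Jensen, and then controls this $L^2(\mu)$-difference of wave propagators through the resolvent-density approximation machinery (Lemmas~\ref{estimate_resolvent_lemma} and~\ref{estimate_resolvent_lemma_2} together with the Lipschitz property of $\rho_1^b$), arriving at the bound $4L_1t^2|x-y|$. Taking the square root, the paper's proof therefore yields only $|x-y|^{1/2}$, not the Lipschitz bound actually written in the statement. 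Your approach --- observing that $\cos(t\sqrt{-\Delta_\mu^b})$ and $(-\Delta_\mu^b)^{-1/2}\sin(t\sqrt{-\Delta_\mu^b})$ preserve $\mathcal{D}(\Delta_\mu^b)$ with uniform control on $\lVert\Delta_\mu^b v_i(t,\cdot)\rVert_\mu$, and then reading off a uniform bound on the classical derivative from the defining integral identity of $\mathcal{D}_\mu^2$ --- is both more elementary (it avoids the entire resolvent-approximation apparatus) and stronger, delivering the full Lipschitz exponent as stated. The paper's route has the advantage of reusing the same propagator estimate that is anyway needed for the stochastic term $v_1$, but at the cost of losing half an exponent here.
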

\begin{proof}
First, we consider the spacial continuity, where we use ideas from \cite[Proposition 4.1]{HYT}. Let $t\in[0,T]$, $x,y\in F$. By Lemma \ref{resolvent_estimate}, \ref{estimate_resolvent_lemma} and \ref{estimate_resolvent_lemma_2},
\begin{align*}
&\left|\int_0^1P_b(t,x,z)u_1(z)d\mu(z)-\int_0^1P_b(t,y,z)u_1(z)d\mu(z)\right|^2 \\
&\leq \int_0^1\left(P_b(t,x,z)-P_b(t,y,z)\right)^2u_1^2(z)d\mu(z)\\
&\leq  \sup_{z\in[0,1]}\left(u_1^2(z)\right)\int_0^1\left(P_b(t,x,z)-P_b(t,y,z)\right)^2d\mu(z)\\
& =\sup_{z\in[0,1]}\left(u_1^2(z)\right)\lim_{n\to\infty}\int_0^1\left(\left\langle P_b(t,\cdot,z),f_n^x-f_n^y\right\rangle\right)^2d\mu(z) 
\end{align*} 
\begin{align*}
&\leq 2\sup_{z\in[0,1]}\left(u_1^2(z)\right)t^2\lim_{n\to\infty} \left|\int_0^1\int_0^1\rho_1^b(z_1,z_2)(f_n^x(z_1)-f_n^y(z_1))(f_n^x(z_2)-f_n^y(z_2)) d\mu(z_1)d\mu(z_2)\right|\\
&=2\sup_{z\in[0,1]}\left(u_1^2(z)\right)t^2\left|\rho_1^b(x,x)-2\rho_1^b(x,y)+\rho_1^b(y,y)\right|\\
&\leq 4L_1\sup_{z\in[0,1]}\left(u_1^2(z)\right)t^2|x-y|.
\end{align*} 
Recall that $[0,1]\setminus F = \bigcup_{i=1}^{\infty} (a_i,b_i)$ (see \eqref{offene_mengen}). Now, let $b=N$ and $x,y\in F^c$ such that there exists an $i\in\mathbb{N}$ with $(x,y)\in(a_i,b_i)$, where we assume $x<y$. Then, since $a_i,b_i\in F$, the previous calculation implies
\begin{align}
&\left|\int_0^1\left(P_N(t,x,z)-P_N(t,y,z)\right)^2u_1^2(z)d\mu(z)\right|\notag\\
&\leq \sup_{z\in[0,1]}\left(u_1^2(z)\right) \sum_{k=2}^{\infty}\frac{\sin^2\left(\sqrt{\lambda_k^N}t\right)}{\lambda_k^N}\left(\varphi_k^N(x)-\varphi_k^N(y)\right)^2\notag\\
&\leq \sup_{z\in[0,1]}\left(u_1^2(z)\right) \left(\frac{x-y}{b_i-a_i}\right)^2\sum_{k=2}^{\infty}\frac{\sin^2\left(\sqrt{\lambda_k^N}t\right)}{\lambda_k^N}\left(\varphi_k^N(b_i)-\varphi_k^N(a_i)\right)^2 \label{v2_spatial_1}\\
&\leq 4L_1\sup_{z\in[0,1]}\left(u_1^2(z)\right)t^2\left(\frac{x-y}{b_i-a_i}\right)^2|b_i-a_i|\notag\\
&\leq 4L_1\sup_{z\in[0,1]}\left(u_1^2(z)\right)t^2\frac{(x-y)^2}{b_i-a_i}\notag\\
&\leq 4L_1\sup_{z\in[0,1]}\left(u_1^2(z)\right)t^2|x-y|\notag,
\end{align}
where we have used that for $k\in\mathbb{N}$ $\varphi_k^N$ is linear on $(a_i,b_i)$, $i\in\mathbb{N}$ in \eqref{v2_spatial_1}.
The remaining cases for $x,y\in[0,1]$ follow by using the triangle inequality for the $\mathcal{H}$-norm. Since the Dirichlet case works similarly, we obtain for all $(x,y)\in[0,1]$
\begin{align*}
\left|\int_0^1P_b(t,x,z)u_1(z)d\mu(z)-\int_0^1P_b(t,y,z)u_1(z)d\mu(z)\right|
&\leq 3\cdot 2^{\frac{1}{2}}\sup_{z\in[0,1]}\left(|u_1(z)|\right)2^{\frac{1}{2}}t|x-y|^{\frac{1}{2}}.
\end{align*}
We turn to $v_3$ and define $\widetilde{u}_{0,k}^b=\sqrt{\lambda_k^b}u_{0,k}^b$, $k\in\mathbb{N}$ for $k\geq 2$. With that, 
\begin{align*}
\frac{\partial}{\partial t}\int_0^1P_N(t,x,y)u_0(y)d\mu(y)&=u_{0,1}^N+\sum_{k\geq 2}\cos\left(\sqrt{\lambda_k^N}t\right)u_{0,k}^N\varphi_k^N(x)\\
&= u_{0,1}^N+\sum_{k\geq 2}\frac{\cos\left(\sqrt{\lambda_k^N}t\right)}{\sqrt{\lambda_k^N}} \widetilde{u}_{0,k}^N\varphi_k^N(x)
\end{align*}
and can now argue similar to the proof for $v_2$ since $\sum_{k\geq 2}\widetilde{u}_{0,k}^N\varphi_k^N\in\mathcal{D}\left(\left(\Delta_{\mu}^b\right)^{\frac{1}{2}}\right)$ as $u_1$. Again, the proof works analogously for Dirichlet boundary conditions. \par
For the temporal continuity, let $s,t\in[0,T]$ with $s<t$ and $x\in[0,1].$ Then,
\begin{align*}
&\left|\int_0^1\left(P_b(t,x,y)-P_b(s,x,y)\right)u_1(y)d\mu(y)\right| \\
&\leq (t-s)|u_{1,0}|+\sum_{k=2}^{\infty}\left(\frac{\sin\left(\sqrt{\lambda_k}(t)\right)-\sin\left(\sqrt{\lambda_k}(s)\right)}{\sqrt{\lambda_k}}\right)|\varphi_k(x)u_{1,k}|\\
&\leq (t-s)|u_{1,0}|+\sum_{k=2}^{\infty}\left(\frac{2\wedge \left( \sqrt{\lambda_k}(t)-\sqrt{\lambda_k}(s)\right) }{\sqrt{\lambda_k}}\right)|\varphi_k(x)u_{1,k}|\\
&\leq 2TC_0^{-\frac{1}{2}}C_2C_5\sum_{k=2}^{\infty} \left(k^{\frac{\delta}{2}-\frac{1}{\gamma}}\wedge\left(|s-t|k^{\frac{\delta}{2}-\frac{1}{2\gamma}}\right)\right).
\end{align*}
Choose $a=\frac{\delta}{2}-\frac{1}{\gamma}+1$ and $b=\frac{\delta}{2}-\frac{1}{2\gamma}+1$ in Lemma \ref{summenlemma} to get
\begin{align*}
\left|\int_0^1\left(P_N(t,x,y)-P_N(s,x,y)\right)u_1(y)d\mu(y)\right|
&\leq 2C_0^{-\frac{1}{2}}C_2C_5C_{a,b}|s-t|^{\left(2-(2+\delta)\gamma\right)\wedge 1}.
\end{align*}
With similar methods,
\begin{align*}
\sum_{k=1}^{\infty}\left|\cos\left(\sqrt{\lambda_k^N}t\right)-\cos\left(\sqrt{\lambda_k^N}s\right)\right||\varphi_k^N(x)||u_{0,k}^N|
&\leq 2C_1^{\frac{1}{2}} C_2C_4\sum_{k\in\mathbb{N}}\left(1\wedge k^{\frac{1}{2\gamma}} (t-s) \right)k^{\frac{\delta}{2}}k^{-\frac{1}{\gamma}}.
\end{align*}
Again, choose $a=\frac{\delta}{2}-\frac{1}{\gamma}+1$ and $b=\frac{\delta}{2}-\frac{1}{2\gamma}+1$ to get
\begin{align*}
\sum_{k\in\mathbb{N}}\left|\cos\left(\sqrt{\lambda_k^N}t\right)-\cos\left(\sqrt{\lambda_k^N}s\right)\right||\varphi_k^N(x)||u_{0,k}|&\leq 2C_1^{\frac{1}{2}} C_2C_4C_{a,b}|t-s|^{\left(\frac{\frac{1}{\gamma}-1-\frac{\delta}{2}}{\frac{1}{2\gamma}}\right)\wedge 1}\\ &=2C_1^{\frac{1}{2}} C_2C_4|t-s|^{(2-(2+\delta)\gamma) \wedge 1}.
\end{align*}
The calculation for Dirichlet boundary conditions works similarly.
\end{proof}

\begin{satz} \label{stochastic_continuity}
Let $q\geq 2$ and $T>0$ be fixed. Then, there exists a constant $c_8>0$ such that for all $v_0\in \mathcal{S}_{q,T}$ $v_1$ is well-defined, predictable and it holds for all $t\in[0,T], x,y\in[0,1]$
\begin{align*}
\mathbb{E}\left(|v_1(t,x)-v_1(t,y)|^q\right)&\leq c_8\left(1+\lVert v_0\rVert_{q,T}^q\right)|x-y|^{\frac{q}{2}}\\
\mathbb{E}\left(|v_1(s,x)-v_1(t,x)|^q\right)&\leq c_8\left(1+\lVert v_0\rVert_{q,T}^q\right)|s-t|^{\frac{q}{d_H+1+\frac{\log(\nu_{\min})}{\log(r_{\max})}}}.
\end{align*}
\end{satz}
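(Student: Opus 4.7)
The plan is to apply the Burkholder-Davis-Gundy inequality for Walsh stochastic integrals together with Jensen's inequality to reduce the $q$-th moments of increments of $v_1$ to deterministic $L^2(\mu)$-estimates of the corresponding increments of the wave propagator; the linear growth condition on $f$ in Condition \ref{hypo} and the membership $v_0\in\mathcal{S}_{q,T}$ then absorb the $f$-factor into the multiplicative constant $c_8(1+\|v_0\|_{q,T}^q)$. Predictability of $v_1$ is automatic since $f(s,v_0(s,y))$ is predictable and $P_b$ is deterministic, and well-definedness follows from the uniform bound $\|P_b(t,x,\cdot)\|_\mu\leq C_3$ of Lemma \ref{propagator_prop}.

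For the spatial estimate, after BDG and Jensen the quantity to bound is the deterministic integral
\[
\int_0^t\int_0^1 \bigl(P_b(t-r,x,z)-P_b(t-r,y,z)\bigr)^2 d\mu(z)\,dr,
\]
which I handle exactly as in the spatial part of the proof of Proposition \ref{v_2_continuity}: approximate the point evaluations of $P_b$ in $\mathcal{H}$ via Lemma \ref{estimate_resolvent_lemma_2}, apply Lemma \ref{estimate_resolvent_lemma} to dominate the limit by a resolvent density expression, and invoke the Lipschitz continuity of $\rho_1^b$ from Proposition \ref{resolvent_density_lipschitz}. This produces an upper bound of order $|x-y|$, giving the desired $|x-y|^{q/2}$ moment bound on $[0,1]$.

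For the temporal estimate I split $v_1(t,x)-v_1(s,x)=I_1+I_2$, where $I_1$ has kernel $P_b(t-r,x,y)\mathds{1}_{(s,t]}(r)$ and $I_2$ has kernel $(P_b(t-r,x,y)-P_b(s-r,x,y))\mathds{1}_{(0,s]}(r)$. After BDG and Jensen, the moment of $I_1$ is controlled by the $q/2$-power of $\int_0^{t-s}\|P_b(u,x,\cdot)\|_\mu^2\,du$. Bounding $\|P_b(u,x,\cdot)\|_\mu^2$ spectrally via Theorem \ref{eigenfunction_estimate_theorem} and Lemma \ref{summenlemma}, and using Condition \ref{hypoi}, produces a power of $(t-s)$ which on the bounded interval $[0,T]$ is at least $(t-s)^{q/\alpha}$, where $\alpha:=d_H+1+\log(\nu_{\min})/\log(r_{\max})$.

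The main obstacle is $I_2$, where the crude eigenfunction-sum bound on $\|P_b(u+h,x,\cdot)-P_b(u,x,\cdot)\|_\mu$ only gives the exponent $1-\gamma(\delta+1)$ in $h=t-s$, generally strictly worse than the target $1/\alpha$. To obtain the sharp exponent I adapt the approximation used in Lemma \ref{estimate_resolvent_lemma_2}: for $x\in F$ and $n\in\mathbb{N}$ the triangle inequality in $\mathcal{H}$ yields
\[
\bigl\|P_b(u+h,x,\cdot)-P_b(u,x,\cdot)\bigr\|_\mu \leq \bigl\|\bigl\langle P_b(u+h,\cdot,y)-P_b(u,\cdot,y),f_n^x\bigr\rangle\bigr\|_\mu + C(u+h)r_{\max}^{n/2},
\]
where the second summand is Lemma \ref{estimate_resolvent_lemma_2} applied at both $u$ and $u+h$. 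Using the identity $\sin(a+b)-\sin(a)=2\sin(b/2)\cos(a+b/2)$ and $|\sin\theta|\leq|\theta|$ in the spectral expansion of $P_b$ bounds the first summand by $h\,\|f_n^x\|_\mu$, which by \eqref{f_n^x_norm_esimate} is of order $h\,r_{\max}^{-nd_H/2}\nu_{\min}^{-n/2}$. Choosing $n\in\mathbb{N}$ so that $r_{\max}^{n\alpha/2}\asymp h$ balances the two summands and delivers the desired $h^{1/\alpha}$ bound uniformly in $u\in[0,T]$. The case $x\in F^c$ is then handled as in the spatial argument of Proposition \ref{v_2_continuity}, using linearity of the eigenfunctions on each complementary interval $(a_i,b_i)$.
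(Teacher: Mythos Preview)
Your argument is essentially correct and parallels the paper closely for well-definedness, the spatial estimate, and the core $I_2$ piece of the temporal estimate (the paper, like you, approximates $P_b(\cdot,x,y)$ by $\langle P_b(\cdot,\cdot,y),f_n^x\rangle$ via Lemma~\ref{estimate_resolvent_lemma_2}, obtains a bound of the shape $C(t-s)^2 r_{\max}^{-nd_H}\nu_{\min}^{-n}+C'r_{\max}^n$, and balances in $n$ to produce the exponent $2/\alpha$).

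The one place where you deviate is $I_1$: the paper does \emph{not} bound $\int_0^{t-s}\lVert P_b(u,x,\cdot)\rVert_\mu^2\,du$ spectrally, but instead runs the same $f_n^x$-approximation on the interval $(s,t]$ and folds the resulting terms into the same optimization as $I_2$. Your spectral route via Theorem~\ref{eigenfunction_estimate_theorem} and Lemma~\ref{summenlemma} yields the exponent $3-2\gamma(\delta+1)$ for $\int_0^{t-s}\lVert P_b(u,x,\cdot)\rVert_\mu^2\,du$, and you then assert without argument that this is at least $2/\alpha$. That comparison is not automatic: it amounts to $3-2\gamma-2\gamma\delta\geq 2/\alpha$, which does hold but requires the intermediate inequality $1/\alpha\leq 1-\gamma\delta$ established inside the proof of Lemma~\ref{hoelder_exponent_lemma} together with $\gamma<\tfrac12$ (so that $3-2\gamma-2\gamma\delta>2-2\gamma\delta\geq 2/\alpha$). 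You should make this step explicit. The paper's joint optimization sidesteps the comparison entirely, which is arguably cleaner; your variant is fine once the inequality is justified.
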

\begin{proof}
For fixed $x\in[0,1]$, $P(\cdot,x,\cdot)$ is measurable and deterministic and therefore predictable and $f$ and $v_0$ are predictable, according to the assumption. Hence, the integrand in \eqref{v1} is predictable. By Hypothesis \ref{hypoiii} we have
\begin{align}
|f(t,v_0(t,x))|\leq M(t)+L|v_0(t,x)|, ~~ (t,x)\in[0,T]\times[0,1]\label{fprop1}.
\end{align}
With that, for $t\in[0,T],x\in[0,1]$
\begin{align*}
&\mathbb{E}\left[\int_0^T\int_0^1 P_b^2(t-s,x,y)\left(f(s,v_0(s,y))\right)^2d\mu(y)ds   \right] \\
&\leq \sup_{s\in[0,T]}\left\lVert M(s)\right\rVert_{L^2(\Omega)}+L\left\lVert v_0\right\rVert_{q,T}\int_0^T\int_0^1 P_b^2(t-s,x,y)d\mu(y)ds \\
&\leq \sup_{s\in[0,T]}\left\lVert M(s)\right\rVert_{L^2(\Omega)}+LT\left\lVert v_0\right\rVert_{q,T}\sup_{(s,x)\in[0,T]\times[0,1]}\int_0^1 P_b^2(s,x,y)d\mu(y),
\end{align*}
which is finite, independently of $x$ and $t$, due to Lemma \ref{propagator_prop}.
Consequently, $v_1$ is well-defined for $t,x\in[0,T]\times[0,1]$. We now prove the spatial estimate for $v_1$. For that, let $t\in[0,T],x,y\in[0,1]$ be fixed. Then, there exists $C_q>0$ such that
\begin{align}
\mathbb{E}&\left(|v_1(t,x)-v_1(t,y)|^q\right)\notag\\&=
\mathbb{E}\left(\left|\int_0^t\int_0^1\left(P_b(t-s,x,z)-P_b(t-s,y,z)\right)f(s,v_0(s,y))\xi(s,y)d\mu(z)ds \right|^q\right)\notag\\
&\leq C_q\left(\mathbb{E}\left(\left|\int_0^t\int_0^1\left(P_b(t-s,x,z)-P_b(t-s,y,z)\right)^2f(s,v_0(s,y))^2d\mu(z)ds \right|^{\frac{q}{2}}\right)\right)^{\frac{2}{q}\frac{q}{2}}\label{v1spartial1}\\
&\leq C_q\left|\int_0^t\int_0^1\left|\left(P_b(t-s,x,z)-P_b(t-s,y,z)\right)^q\mathbb{E}\left(f(s,v_0(s,y))^q\right)\right|^{\frac{2}{q}}d\mu(z)ds \right|^{\frac{q}{2}}\label{v1spartial2}\\
&= C_q\left|\int_0^t\int_0^1\left(P_b(t-s,x,z)-P_b(t-s,y,z)\right)^2\left|\mathbb{E}\left(f(s,v_0(s,y))^q\right)\right|^{\frac{2}{q}}d\mu(z)ds \right|^{\frac{q}{2}}\\
&\leq  2^{q-1}C_q\left(\lVert M\rVert_{q,T}^q+L^q\lVert v_0\rVert_{q,T}^q\right)\left|\int_0^t\int_0^1\left(P_b(t-s,x,z)-P_b(t-s,y,z)\right)^2d\mu(z)ds \right|^{\frac{q}{2}}\label{v1spartial3},
\end{align}
where we have used the Burkholder-Davis-Gundy inequality (see e.g. \cite[Theorem B.1]{KAS}) in \eqref{v1spartial1}, which can be used since the considered stochastic integral is a square-integrable martingale (see \cite[Theorem 2.5]{WI}, Minkowski's integral inequality in \eqref{v1spartial2} and the relation 
\begin{align}
\mathbb{E}\left(f(s,v_0(s,y))^q\right)\leq \mathbb{E}\left( M(s)+L|v_0(s,y)|\right)^q\leq 2^{q-1}\left(\mathbb{E}\left(M(s)^q\right)+L^q\mathbb{E}\left(\left|v_0(s,y)\right|^q\right)\right),\label{festimate}
\end{align}
which follows from \eqref{fprop1}, in \eqref{v1spartial3}. 
We proceed by estimating the integral term in \eqref{v1spartial3}, whereby we first treat the case $x,y\in F$. 
Analogously to the proof of Proposition \ref{v_2_continuity}, we calculate
\begin{align*}
&\int_0^t\int_0^1\left(P_b(t-s,x,z)-P_b(t-s,y,z)\right)^2d\mu(z)ds \\
&=\int_0^t\int_0^1\left(P_b(s,x,z)-P_b(s,y,z)\right)^2d\mu(z)ds \\
&\leq \int_0^t 4L_1s^2|x-y|ds\\
&\leq 4L_1\frac{t^3}{3} |x-y|
\end{align*}
Now, let $b=N$ and $x,y\in F^c$ such that there exists an $i\in\mathbb{N}$ with $(x,y)\in(a_i,b_i)$, where we assume $x<y$. Again, we can follow the proof of Proposition \ref{v_2_continuity} and get
\begin{align}
&\int_0^t\int_0^1\left(P_N(t,x,z)-P_N(t,y,z)\right)^2d\mu(z)ds\notag\\
&\leq \int_0^t 4L_1s^2|x-y|ds\notag\\
&\leq 4L_1\frac{t^3}{3} |x-y|\notag
\end{align}
The remaining cases for $x,y\in[0,1]$ follow by using the triangle inequality for the norm $L^2([0,T]\times[0,1],\lambda^1\times\mu)$, whereby this works analogously for $b=D$. Consequently, for all $(x,y)\in[0,1]$
\begin{align*}
&\left|\int_0^t\int_0^1\left(P_b(t-s,x,z)-P_b(t-s,y,z)\right)^2d\mu(z)ds\right|^{\frac{1}{2}}\leq 3\cdot 2 L_1^{\frac{1}{2}} t^{\frac{1}{2}}|x-y|^{\frac{1}{2}}.
\end{align*}
We conclude
\begin{align*}
\mathbb{E}&\left(|v_1(t,x)-v_1(t,y)|^q\right)\leq
3^{q}2^{2q-1}T^{\frac{q}{2}}C_qL_1^{\frac{q}{2}}\left(\lVert M\rVert_{q,T}^q+L^q\lVert v_0\rVert_{q,T}^q\right)|x-y|^{\frac{q}{2}}.
\end{align*}
This proves the spacial estimate. \par We now turn to the temporal esimate, where we adapt ideas from \cite[Proposition 4.3]{HYT}. Let $s,t\in[0,T]$ with $s<t$ and $x\in[0,1]$ be fixed. Then, by using the Burkholder-Davis-Gundy inequality, Minkowski's integral inequality and inequality \eqref{festimate}, we get
\begin{align}
&\mathbb{E}\left(|v_1(t,x)-v_1(s,x)|^q\right)\notag\\
&\leq C_q\left|\int_0^t\int_0^1\left|\left(P_b(t-u,x,y)-P_b(s-u,x,y)\mathds{1}_{[0,s]}(u)\right)^2\mathbb{E}\left(f(s,v_0(s,y))^q\right)\right|^{\frac{2}{q}}d\mu(y)du \right|^{\frac{q}{2}}\notag\\
&\leq  2^{q-1}C_q\left(\lVert M\rVert_{q,T}^q+L^q\lVert v_0\rVert_{q,T}^q\right)\left|\int_0^t\int_0^1\left(P_b(t-u,x,y)-P_b(s-u,x,y)\mathds{1}_{[0,s]}(u)\right)^2d\mu(y)du \right|^{\frac{q}{2}}\notag.
\end{align}
We split the above integral in the time intervals $[0,s]$ and $(s,t]$ and consider the first part,
\begin{align}
\int_0^s&\int_0^1\left(P_b(t-u,x,y)-P_b(s-u,x,y)\mathds{1}_{[0,s]}(u)\right)^2d\mu(y)du\notag \\
&= \int_0^s\int_0^1\left(P_b(t-u,x,y)-P_b(s-u,x,y)\right)^2d\mu(y)du\notag.
\end{align}
By Lemma \ref{estimate_resolvent_lemma_2},
\begin{align*}
&\left(\int_0^1 \left(P_b(t-u,x,y) - P_b(s-u,x,y)\right)^2d\mu(y)\right)^{\frac{1}{2}} \\ &- \left( 
  \int_0^1\left(\left\langle P_b(t-u,\cdot,y)-P_b(s-u,\cdot,y),f_n^x\right\rangle\right)^2d\mu(y)\right)^{\frac{1}{2}}\\
&\leq \left( \int_0^1 \left(P_b(t-u,x,y) - P_b(s-u,x,y)- 
 \left(\left\langle P_b(t-u,\cdot,y)-P_b(s-u,\cdot,y),f_n^x\right\rangle\right)\right)^2d\mu(y)\right)^{\frac{1}{2}} \\
&\leq \left(\int_0^1\left(\left\langle P_b(t-u,\cdot,y),f_n^x\right\rangle - P_b(t-u,x,y)\right)^2d\mu(y)\right)^{\frac{1}{2}}  \\ &~~+
\left(\int_0^1\left(\left\langle P_b(s-u,\cdot,y),f_n^x\right\rangle - P_b(s-u,x,y)\right)^2d\mu(y) \right)^{\frac{1}{2}}\\
&\leq C_6^{\frac{1}{2}}(t+s-2u)r_{\max}^{\frac{n}{2}}.
\end{align*}
By resorting and squaring,
\begin{align*}
&\int_0^1 \left(P_b(t-u,x,y) - P_b(s-u,x,y)\right)^2d\mu(y)\\
&\leq 2 
  \int_0^1\left(\left\langle P_b(t-u,\cdot,y)-P_b(s-u,\cdot,y),f_n^x\right\rangle\right)^2d\mu(y)+ 2 C_6(t+s-2u)^2r_{\max}^n
\end{align*}
and by integration,
\begin{align*}
&\int_0^s\int_0^1 \left(P_b(t-u,x,y) - P_b(s-u,x,y)\right)^2d\mu(y)ds\\ 
&\leq 2 
  \int_0^s\int_0^1\left(\left\langle P_b(t-u,\cdot,y)-P_b(s-u,\cdot,y),f_n^x\right\rangle\right)^2d\mu(y)+ C_6(t+s-2u)^2r_{\max}^ndu\\
  &= 2 
 \int_0^s \int_0^1\left(\left\langle P_b(t-u,\cdot,y)-P_b(s-u,\cdot,y),f_n^x\right\rangle\right)^2d\mu(y)du+
\frac{4}{6}C_6\left((t+s)^3-(t-s)^3\right)r_{\max}^n.
\end{align*}
%
Now, let $b=N$. We consider the first term on the right-hand side of the last equality.
Applying the Cauchy-Schwarz inequality,
\begin{align*}
&\left|\int_0^s\int_0^1\left(\left\langle P_N(t-u,z,y)-P_N(s-u,z,y),f_n^x(z)\right\rangle\right)^2d\mu(y)du \right| \\
&\leq \left\lVert f_n^x\right\rVert_{\mu}^2\int_0^s\int_0^1
(t-s)^2+\sum_{k=2}^{\infty}\frac{\left(\sin\left(\sqrt{\lambda_k^N}(t-u)\right)-\sin\left(\sqrt{\lambda_k}(s-u)\right)\right)^2}{\lambda_k^N}\left(\varphi_k^N\right)^2(y)d\mu(y)du.
\end{align*}
Since $\left\lVert \varphi_k^N\right\rVert_{\mu}=1$,
\begin{align*}
&\int_0^s\int_0^1 (t-s)^2+
\sum_{k=2}^{\infty}\frac{\left(\sin\left(\sqrt{\lambda_k^N}(t-u)\right)-\sin\left(\sqrt{\lambda_k^N}(s-u)\right)\right)^2}{\lambda_k^N}\left(\varphi^N_k\right)^2(y)d\mu(y)du\\
&= \int_0^s (t-s)^2+ \sum_{k=2}^{\infty}\frac{\left(\sin\left(\sqrt{\lambda_k^N}(t-u)\right)-\sin\left(\sqrt{\lambda_k^N}(s-u)\right)\right)^2}{\lambda_k^N}du
\\
&= s(t-s)^2+\sum_{k=2}^{\infty}\frac{1}{\lambda_k^N}\int_0^s\left(\sin\left(\sqrt{\lambda_k^N}(t-u)\right)-\sin\left(\sqrt{\lambda_k^N}(s-u)\right)\right)^2du\\
&= s(t-s)^2+\sum_{k=2}^{\infty}\frac{1}{\lambda_k^N}\int_0^s\left(\sin\left(\sqrt{\lambda_k^N}(t-s+u)\right)-\sin\left(\sqrt{\lambda_k^N}(u)\right)\right)^2du\\
&= s(t-s)^2+\sum_{k=2}^{\infty}\frac{1}{\left(\lambda_k^N\right)^{\frac{3}{2}}}\int_0^{\sqrt{\lambda_k^N}s}\left(\sin\left(t-s+u\right)-\sin\left(u\right)\right)^2du\\
&= s(t-s)^2+\sum_{k=2}^{\infty}\frac{1}{\left(\lambda_k^N\right)^{\frac{3}{2}}} \sin^2\left(\frac{t-s}{2}\right)\left(\sin(t-s+2\sqrt{\lambda_k^N}s)-\sin(t-s)+2\sqrt{\lambda_k^N}s\right)
\\&\leq (t-s)^2+
(2+2T)\sum_{k=2}^{\infty}\frac{1}{\lambda_k^N}\sin^2\left(\frac{t-s}{2}\right)\\
&\leq T(t-s)^2+\left(\frac{1}{2}+\frac{1}{2}T\right)\sum_{k=2}^{\infty}\frac{1}{\lambda_k^N}(t-s)^2,
\end{align*}
whereby $\sum_{k=2}^{\infty}\frac{1}{\lambda_k^N}<\infty$ since $\gamma<\frac{1}{2}.$ We turn to the second part and get analogous to the first part
\begin{align*}
&\int_s^t\int_0^1\left(P_N(t-u,x,y)\right)^2d\mu(y)du \\
&\leq 2\int_s^t\int_0^1\left(\left\langle P_N(t-u,\cdot,y),f_n^x\right\rangle\right)^2d\mu(y)  +2 C_6(t-u)^2r_{\max}^ndu\\
&= 2\int_s^t\int_0^1\left(\left\langle P_N(t-u,\cdot,y),f_n^x\right\rangle\right)^2d\mu(y)du  +\frac{2}{3} C_6(t-3)^3r_{\max}^n.
\end{align*}
Again, we give an upper bound for the integral term.
\begin{align*}
&\int_s^t\int_0^1\left(\left\langle P_N(t-u,\cdot,y),f_n^x(\cdot)\right\rangle\right)^2d\mu(y)du  \\
&\leq \left\lVert f_n^x\right\rVert_{\mu}^2\int_s^t\int_0^1
(t-u)^2+\sum_{k=2}^{\infty}\frac{\sin^2\left(\sqrt{\lambda_k^N}(t-u)\right)^2}{\lambda_k^N}\left(\varphi_k^N\right)^2(y)d\mu(y)du.
\end{align*}
With similar methods as above,
\begin{align*}
&\int_s^t\int_0^1 (t-u)^2+
\sum_{k=1}^{\infty}\frac{\sin^2\left(\sqrt{\lambda_k^N}(t-u)\right)}{\lambda_k^N}\left(\varphi_k^N\right)^2(y)d\mu(y)du\\
&=\frac{(t-s)^3}{3}+\sum_{k=2}^{\infty}\int_s^t \frac{\sin^2\left(\sqrt{\lambda_k^N}(t-u)\right)}{\lambda_k^N}du\\
&=\frac{(t-s)^3}{3}+\sum_{k=2}^{\infty}\int_0^{t-s}\frac{\sin^2\left(\sqrt{\lambda_k^N}(u)\right)}{\lambda_k^N}du\\
&\leq\frac{(t-s)^3}{3}+\sum_{k=2}^{\infty}\int_0^{t-s}\frac{\left|\sin\left(\sqrt{\lambda_k^N}u\right|\right)}{\lambda_k^N}du\\
&\leq\frac{(t-s)^3}{3}+\sum_{k=2}^{\infty}\int_0^{t-s}\frac{u}{\sqrt{\lambda_k^N}}du\\
&=\frac{(t-s)^3}{3}+\frac{1}{2}\sum_{k=1}^{\infty} \frac{1}{\sqrt{\lambda_k^N}}(t-s)^2
\end{align*}
with $\sum_{k=2}^{\infty}\frac{1}{\sqrt{\lambda_k^N}}<\infty$.
Further, by \eqref{f_n^x_norm_esimate},
\begin{align*}
\left\lVert f_n^x\right\rVert_{\mu}^2<r_{\min}^{-d_H}r_{\max}^{-nd_H}\nu_{\min}^{-n}.
\end{align*}
Consequently, there exists $C>0$ and $C'>0$ such that for all $t,s\in[0,T],~x\in F$, $n\in\mathbb{N}$
\begin{align*}
\int_0^t\int_0^1\left(P_N(t-u,x,y)-P_N(s-u,x,y)\mathds{1}_{[0,s]}(u)\right)^2d\mu(y)du &\leq C(t-s)^2r_{\max}^{-nd_H}\nu_{\min}^{-n}+C'r_{\max}^n\\
& C(t-s)^2r_{\max}^{-nd_H}\nu_{\min}^{-n}+C''r_{\max}^n,
\end{align*}
where $C''\coloneqq \max\left\{C',C(t-s)^2\left(d_H+\frac{\log(\nu_{\min})}{\log (r_{\max})}\right)\right\}$. In order to find the minimum in $n$, we define
\begin{align*}
f(y)
&\coloneqq C(t-s)^{2}e^{y\log\left(\frac{1}{r_{\max}}\right)\left(d_H+\frac{\log(\nu_{\min})}{\log (r_{\max})}\right)}+C'' e^{-\log\left(\frac{1}{r_{\max}}\right)y}.
\end{align*}
We differentiate:
\begin{align*}
f'(y)&=C(t-s)^2\log\left(\frac{1}{r_{\max}}\right)\left(d_H+\frac{\log(\nu_{\min})}{\log (r_{\max})}\right)e^{y\log\left(\frac{1}{r_{\max}}\right)\left(d_H+\frac{\log(\nu_{\min})}{\log (r_{\max})}\right)}\\
&~~~-C''\log\left(\frac{1}{r_{\max}}\right)e^{-\log\left(\frac{1}{r_{\max}}\right)y}.
\end{align*}
Setting zero we get
\begin{align*}
&e^{y\log\left(\frac{1}{r_{\max}}\right)\left(d_H+\frac{\log(\nu_{\min})}{\log (r_{\max})}+1\right)}\\
&=\frac{C''\log\left(\frac{1}{r_{\max}}\right)}{C(t-s)^2\log\left(\frac{1}{r_{\max}}\right)\left(d_H+\frac{\log(\nu_{\min})}{\log (r_{\max})}\right)}=\frac{C''}{C(t-s)^2\left(d_H+\frac{\log(\nu_{\min})}{\log (r_{\max})}\right)}.
\end{align*}
By logarithmising we obtain
\begin{align*}
y\log\left(\frac{1}{r_{\max}}\right)\left(d_H+\frac{\log(\nu_{\min})}{\log (r_{\max})}+1\right)
=\log\left(\frac{C''}{C(t-s)^2\left(d_H+\frac{\log(\nu_{\min})}{\log (r_{\max})}\right)}\right).
\end{align*}
Solving this equation for $y$ we get
\begin{align*}
y=\frac{1}{\log\left(\frac{1}{r_{\max}}\right)\left(d_H+\frac{\log(\nu_{\min})}{\log (r_{\max})}+1\right)}\log\left(\frac{C''}{C(t-s)^2\left(d_H+\frac{\log(\nu_{\min})}{\log (r_{\max})}\right)}\right),
\end{align*}
which we denote by $y_0$. This value does not need to be an integer, but there exists an integer $n$ with $n\in[y_0,y_0+1)$. 
Since $y_0$ is the unique minimum on $\mathbb{R}$, $f$ is increasing on $[y_o,\infty)$. Hence, there exists $C'''$ such that
\begin{align*}
&\int_0^t\int_0^1\left(P_N(t-u,x,y)-P_N(s-u,x,y)\mathds{1}_{[0,s]}(u)\right)^2d\mu(y)du \\&\leq f\left(y_0+1\right)\\
&= C(t-s)^2\left(\frac{1}{r_{\max}}\right)^{\left(d_H+\frac{\log(\nu_{\min})}{\log(r_{\max})}\right)}
\left(\frac{1}{r_{\max}}\right)^{\frac{\log\left(\frac{C''}{C(t-s)^2\left(d_H+\frac{\log(\nu_{\min})}{\log (r_{\max})}\right)}\right)}{\log\left(\frac{1}{r_{\max}}\right)}\frac{d_H+\frac{\log(\nu_{\min})}{\log(r_{\max})}}{d_H+1+\frac{\log(\nu_{\min})}{\log(r_{\max})}}}\\
&~~~+C''\left(\frac{1}{r_{\max}}\right)^{-\frac{1}{2}}\left(\frac{1}{r_{\max}}\right)^{\frac{\log\left(\frac{C''}{C(t-s)^2\left(d_H+\frac{\log(\nu_{\min})}{\log (r_{\max})}\right)}\right)}{\log\left(\frac{1}{r_{\max}}\right)}\frac{-1}{d_H+\frac{\log(\nu_{\min})}{\log(r_{\max})}+1}}\\
&= C(t-s)^2\left(\frac{1}{r_{\max}}\right)^{\left(d_H+\frac{\log(\nu_{\min})}{\log(r_{\max})}\right)}
\left(\frac{C''}{C(t-s)^2\left(d_H+\frac{\log(\nu_{\min})}{\log (r_{\max})}\right)}\right)^{\frac{d_H+\frac{\log(\nu_{\min})}{\log(r_{\max})}}{d_H+1+\frac{\log(\nu_{\min})}{\log(r_{\max})}}}\\
&~~~+C''\left(\frac{1}{r_{\max}}\right)^{-\frac{1}{2}}\left(\frac{C''}{C(t-s)^2\left(d_H+\frac{\log(\nu_{\min})}{\log (r_{\max})}\right)}\right)^{\frac{-1}{d_H+1+\frac{\log(\nu_{\min})}{\log(r_{\max})}}}\\
&=C'''(t-s)^{\frac{2}{d_H+1+\frac{\log(\nu_{\min})}{\log(r_{\max})}}}.
\end{align*}
The case $b=D$ works similarly.


\end{proof}

\begin{kor}\label{Sqt_korollar}
Let $q\geq 2$ and $v_0\in\mathcal{S}_{q,T}$. Then, $v_i,~ i=1,2,3,$ defined as in \eqref{v1}-\eqref{v3} are elements of $\mathcal{S}_{q,T}$.
\end{kor}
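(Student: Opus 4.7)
The plan is to handle $v_2$ and $v_3$ first (they are deterministic, so everything reduces to boundedness) and then to attack $v_1$ by the same Burkholder–Davis–Gundy/Minkowski machinery already employed in the proof of Proposition \ref{stochastic_continuity}.

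For $v_2$ and $v_3$, both are deterministic, hence trivially predictable, so only $\sup_{t,x}|v_i(t,x)|<\infty$ needs to be checked. By Proposition \ref{v_2_continuity} both maps are jointly continuous on the compact set $[0,T]\times[0,1]$ (Lipschitz in $x$ and Hölder of some positive exponent in $t$), and therefore bounded. It follows immediately that $\|v_i\|_{q,T}<\infty$ for $i=2,3$.

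For $v_1$, predictability and well-definedness were already established at the beginning of the proof of Proposition \ref{stochastic_continuity}, so it remains to bound $\sup_{t,x}\mathbb{E}|v_1(t,x)|^q$. I would proceed exactly as in the spatial estimate of that proof, but without a difference of propagator kernels: applying the Burkholder–Davis–Gundy inequality and then Minkowski's integral inequality gives
\begin{align*}
\mathbb{E}|v_1(t,x)|^q
&\leq C_q\left|\int_0^t\!\!\int_0^1 P_b^2(t-s,x,y)\bigl[\mathbb{E}\bigl(f(s,v_0(s,y))^q\bigr)\bigr]^{2/q}d\mu(y)\,ds\right|^{q/2}.
\end{align*}
Using the linear growth bound from Hypothesis \ref{hypo}\ref{hypoiii} in the form
$\mathbb{E}(f(s,v_0(s,y))^q)\leq 2^{q-1}(\mathbb{E}(M(s)^q)+L^q\mathbb{E}|v_0(s,y)|^q)$
(cf.\ \eqref{festimate}), the bracketed quantity is dominated by $2^{q-1}\left(\|M\|_{q,T}^q+L^q\|v_0\|_{q,T}^q\right)$ uniformly in $(s,y)$.

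The last step is to absorb the integral in the propagator. By Lemma \ref{propagator_prop},
\[
\int_0^t\!\!\int_0^1 P_b^2(t-s,x,y)\,d\mu(y)\,ds
\leq T\cdot\sup_{u\in[0,T],\,x\in[0,1]}\|P_b(u,x,\cdot)\|_\mu^2\leq TC_3^2,
\]
independently of $(t,x)\in[0,T]\times[0,1]$. Combining these two bounds yields a constant $C=C(q,T,L,C_3,\|M\|_{q,T})$ with
\[
\sup_{t\in[0,T],x\in[0,1]}\mathbb{E}|v_1(t,x)|^q\leq C\bigl(1+\|v_0\|_{q,T}^q\bigr)<\infty,
\]
so $v_1\in\mathcal{S}_{q,T}$. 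There is no real obstacle here; the estimate is essentially a by-product of the computation already carried out in Proposition \ref{stochastic_continuity}, with the crucial uniform $L^2(\mu)$-boundedness of $P_b(t,x,\cdot)$ coming from Lemma \ref{propagator_prop}, which in turn required Hypothesis \ref{hypo}\ref{hypoi}.
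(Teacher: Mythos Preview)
Your argument for the norm bound $\|v_1\|_{q,T}<\infty$ is fine and is essentially equivalent to the paper's device of setting $s=0$ in the temporal estimate of Proposition~\ref{stochastic_continuity}. Likewise your treatment of $v_2,v_3$ is adequate.

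There is, however, a genuine gap in the predictability step for $v_1$. What is shown at the beginning of the proof of Proposition~\ref{stochastic_continuity} is that the \emph{integrand} $(s,y)\mapsto P_b(t-s,x,y)f(s,v_0(s,y))$ is predictable, which is exactly what is needed for the Walsh integral to make sense. It does \emph{not} show that the resulting field $(\omega,t,x)\mapsto v_1(t,x)$ is $\mathcal{P}_{[0,T],[0,1]}$-measurable. These are different statements: the Walsh integral produces, for each fixed $(t,x)$, an $\mathcal{F}_t$-measurable random variable, but joint measurability with respect to the predictable $\sigma$-algebra on $\Omega\times[0,T]\times[0,1]$ is an additional requirement and is not automatic.

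The paper supplies precisely this missing argument in the proof of the corollary: it discretises $v_1$ on a dyadic grid to obtain processes $v_1^n$, shows each $v_1^n$ is an $\mathcal{S}_{q,T}$-limit of simple functions (hence predictable), and then uses the H\"older-type moment estimates of Proposition~\ref{stochastic_continuity} to prove $\|v_1-v_1^n\|_{q,T}\to 0$. Since the predictable $\sigma$-algebra is closed under such limits, $v_1$ is predictable. Your proposal should include this approximation step; without it the membership $v_1\in\mathcal{S}_{q,T}$ is not established.
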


\begin{proof}
By setting $s=0$ in Proposition \ref{stochastic_continuity} we obtain $\left\lVert v_i\right\rVert_{q,T}<\infty$, $i=1,2$. We need to show that $v_1$ is predictable. 
For $n\in\mathbb{N}$ let
\begin{align*}
v_1^n(t,x)=\sum_{i,j=0}^{2^n-1}v_1\left(\frac{i}{2^n}T,\frac{j}{2^n}\right)\mathds{1}_{\left(\frac{i}{2^n}T,\frac{i+1}{2^n}T\right]}(t)\mathds{1}_{\left(\frac{j}{2^n},\frac{j+1}{2^n}\right]}(x), ~~(t,x)\in[0,T]\times[0,1].
\end{align*}
It holds evidently $\left\lVert v_1^n\right\rVert_{q,T}<\infty$.  To prove that $v_1^n$ is predictable, we show that $v_1^n$ is the $\mathcal{S}_{q,T}$-limit of a sequence of simple functions. To this end, let for $N\geq 1$
\begin{align*}
v_1^{n,N}(t,x)=v_1^n(t,x)\wedge N, ~~ t\in[0,T],~ x\in[0,1].
\end{align*}
This defines a simple function since $v_1\left(\frac{i}{2^n}T,\frac{j}{2^n}\right)\wedge N$ is $\mathcal{F}_{\frac{iT}{2^n}}$-measurable and bounded. It converges in $\mathcal{S}_{q,T}$ to $v_1^n$, which can be seen as follows:
\begin{align*}
&\lim_{N\to\infty}\sup_{t\in[0,T]}\sup_{x\in[0,1]}\left\lVert v_1^n(t,x)-v_1^{n,N}(t,x)\right\rVert_{L^q(\Omega)}\\
&\leq  \lim_{N\to\infty}\sup_{t\in[0,T]}\sup_{x\in[0,1]}\sum_{i,j=0}^{2^n-1}\left\lVert v_1\left(\frac{i}{2^n}T,\frac{j}{2^n}\right)- v_1\left(\frac{i}{2^n}T,\frac{j}{2^n}\right)\wedge N\right\rVert_{L^q(\Omega)}\\
&= \lim_{N\to\infty}\sum_{i,j=0}^{2^n-1}\left\lVert v_1\left(\frac{i}{2^n}T,\frac{j}{2^n}\right)- v_1\left(\frac{i}{2^n}T,\frac{j}{2^n}\right)\wedge N\right\rVert_{L^q(\Omega)}
\\ &=0,
\end{align*}
where the last equation follows from the monotone convergence theorem. We conclude that $v_1^n$ is  predictable for $n\in\mathbb{N}$ .
By Proposition \ref{stochastic_continuity}, there exists a constant $C_8'$ such that
\begin{align*}
\left\lVert v_1-v_1^n\right\rVert_{q,T}&\leq \sup_{|s-t|<\frac{T}{n}}\sup_{|x-y|<\frac{1}{n}}\left\lVert v_1(s,x)-v_1(t,y)\right\rVert_{L^q(\Omega)}\\
&\leq  \sup_{|s-t|<\frac{T}{n}}\sup_{|x-y|<\frac{1}{n}}\left\lVert v_1(s,x)-v_1(t,x)\right\rVert_{L^q(\Omega)}\\
&~~~+\sup_{|s-t|<\frac{T}{n}}\sup_{|x-y|<\frac{1}{n}}\left\lVert v_1(t,x)-v_1(t,y)\right\rVert_{L^q(\Omega)}\\
&\leq C_8'\left(\left(\frac{T}{n}\right)^{\frac{1}{2}-\frac{\gamma\delta}{2}}+\left(\frac{1}{n}\right)^{\frac{1}{2}}\right)\to 0, ~n\to\infty.
\end{align*}
Hence, $v_1$ is predictable. The predictability of $v_2$ and $v_3$ follows from the fact that they are measurable and deterministic. 
\end{proof}

\begin{thm}
Assume Condition \ref{hypo} with $q\geq 2$.  Then the SPDE \eqref{spde1} has a unique mild solution in $\mathcal{S}_{q,T}$.
\end{thm}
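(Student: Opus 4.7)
The plan is to set up a Banach fixed point argument. Define the operator $\Phi : \mathcal{S}_{q,T} \to \mathcal{S}_{q,T}$ by
\[
\Phi(v)(t,x) \coloneqq v_3(t,x) + v_2(t,x) + \int_0^t\int_0^1 P_b(t-s,x,y) f(s,v(s,y))\, \xi(s,y)\, d\mu(y)\, ds,
\]
so that $u \in \mathcal{S}_{q,T}$ is a mild solution to \eqref{spde1} if and only if $\Phi(u) = u$. Corollary \ref{Sqt_korollar} already guarantees that $\Phi$ maps $\mathcal{S}_{q,T}$ into itself, so the task reduces to showing $\Phi$ has a unique fixed point.

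For the contraction estimate, let $v, \bar v \in \mathcal{S}_{q,T}$. Since $v_2, v_3$ do not depend on the driving process, $\Phi(v) - \Phi(\bar v)$ is exactly the stochastic convolution of $f(\cdot,v) - f(\cdot,\bar v)$. I would apply the Burkholder-Davis-Gundy inequality followed by Minkowski's integral inequality (as in the proof of Proposition \ref{stochastic_continuity}) to bound, for $(t,x) \in [0,T]\times[0,1]$,
\[
\mathbb{E}\bigl|\Phi(v)(t,x) - \Phi(\bar v)(t,x)\bigr|^q \leq C_q L^q \left(\int_0^t \int_0^1 P_b(t-s,x,y)^2 \bigl\lVert v(s,y) - \bar v(s,y) \bigr\rVert_{L^q(\Omega)}^2\, d\mu(y)\, ds\right)^{q/2}.
\]
Using the uniform bound $\sup_{t,x}\int_0^1 P_b(t,x,y)^2 d\mu(y) \leq C_3^2$ from Lemma \ref{propagator_prop} and pulling out the spatial supremum, one obtains
\[
\sup_{x\in[0,1]} \bigl\lVert \Phi(v)(t,x) - \Phi(\bar v)(t,x)\bigr\rVert_{L^q(\Omega)}^2 \leq K \int_0^t \sup_{y\in[0,1]} \bigl\lVert v(s,y) - \bar v(s,y) \bigr\rVert_{L^q(\Omega)}^2\, ds
\]
for some constant $K$ depending only on $T, q, L, C_3$.

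With this estimate in hand, I would either iterate Picard's scheme directly (so that the $n$-th iterate difference is controlled by $(Kt)^n/n!$ times the initial gap, which is summable and yields a Cauchy sequence in $\mathcal{S}_{q,T}$), or equivalently introduce the equivalent weighted norm $\lVert v \rVert_{q,T,\lambda} \coloneqq \sup_{t\in[0,T]} e^{-\lambda t} \sup_x \lVert v(t,x)\rVert_{L^q(\Omega)}$ and choose $\lambda$ large enough that $\Phi$ becomes a strict contraction on $(\mathcal{S}_{q,T}, \lVert \cdot \rVert_{q,T,\lambda})$. The Banach fixed point theorem then yields existence and uniqueness of a fixed point, which is the desired mild solution. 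Uniqueness in $\mathcal{S}_{q,T}$ (as opposed to uniqueness under the weighted norm) follows immediately from the equivalence of the two norms on $[0,T]$.

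The main obstacle is not analytic but bookkeeping: ensuring that all constants arising from BDG, Minkowski, the Lipschitz condition on $f$, and Lemma \ref{propagator_prop} can be taken uniform in $(t,x)$, so that the Gronwall/Picard step genuinely controls the $\mathcal{S}_{q,T}$-norm rather than only pointwise $L^q$-norms. Once this uniformity is established, the argument is the standard one for semilinear stochastic evolution equations.
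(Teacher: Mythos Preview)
Your proposal is correct and follows essentially the same route as the paper: the paper proves uniqueness via Walsh's isometry and Gronwall, then existence via explicit Picard iteration, deriving the same integral inequality $H_{n+1}(t)\leq \kappa\int_0^t H_n(s)\,ds$ from BDG, Minkowski, the Lipschitz assumption on $f$, and the propagator bound of Lemma~\ref{propagator_prop}. Your fixed-point framing (and the weighted-norm variant) is just a repackaging of that Picard argument, with the minor bonus that uniqueness comes for free rather than via a separate Gronwall step.
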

\begin{proof}
\textit{Uniqueness:} For that, let $u,\widetilde u\in\mathcal{S}_{q,T}$ be mild solution of $\eqref{spde1}$. Then $v\coloneqq u-\widetilde u\in\mathcal{S}_{2,T}$. With $G(t)\coloneqq \sup_{x\in[0,1]}\mathbb{E}\left[v^2(t,x)\right]$ and by using Walsh's isometry,
we calculate for $(t,x)\in[0,T]\times[0,1]$
\begin{align*}
\mathbb{E}\left[v(t,x)^2\right]&=\mathbb{E}\left[\left(\int_0^t\int_0^1P_b(t-s,x,y)\left(f(s,u(s,y))-f\left(s,\widetilde u(s,y)\right)\right)\xi(s,y)d\mu(y)ds\right)^2\right]\\
&=\mathbb{E}\left[\int_0^t\int_0^1P_b(t-s,x,y)^2\left(f(s,u(s,y))-f\left(s,\widetilde u(s,y)\right)\right)^2d\mu(y)ds\right]\\
&\leq L^2\mathbb{E}\left[\int_0^t\int_0^1 v^2(s,y)P_b^2(t-s,x,y)d\mu(y)ds\right]\\
&\leq L^2\left[\int_0^t\sup_{y\in[0,1]}\mathbb{E}\left[v^2(s,y)\right]\int_0^1 P_b^2(t-s,x,y)d\mu(y)ds\right]\\
&\leq L^2\sup_{t\in[0,T]}\left\lVert P_b(t,x,\cdot)\right\rVert_{\mu}^2\int_0^t \sup_{x\in[0,1]}\mathbb{E}\left[v^2(s,x)\right]ds\\
&\leq L^2\sup_{t\in[0,T]}\left\lVert P_b(t,x,\cdot)\right\rVert_{\mu}^2\int_0^t G(s)ds.\\
\end{align*}

	It follows
\begin{align*}
G(t)\leq L^2\sup_{t\in[0,T]}\left\lVert P_b(t,x,\cdot)\right\rVert_{\mu}\int_0^tG(s)ds.
\end{align*}
Since $G$ is continuous on $[0,T]$ (use Proposition \ref{stochastic_continuity} by setting $v_0=v$), we can use Gronwall's lemma to conclude $G(s)=0$ for $s\in[0,T]$ and thus $u(t,x)=\widetilde u(t,x)$ almost surely for every $(t,x)\in[0,T]\times[0,1]$.\\
\textit{Existence:} We follow the methods in \cite[Theorem 7.5]{HYC} and use Picard iteration to find a solution. For that, let $u_2=0\in\mathcal{S}_{q,T}$ and for $n\geq 2$
\begin{align}
u_{n+1}(t,x)=\int_0^1&\frac{\partial}{\partial t}P_b(t,x,y)u_0(y)d\mu(y)+\int_0^1P_b(t,x,y)u_1(y)d\mu(y)\\+&\int_0^t\int_0^1P_b(t-s,x,y)f(s,u_n(s,y))\xi(s,y)d\mu(y)ds. \label{picard_equ}
\end{align}
From Proposition \ref{v_2_continuity} and \ref{stochastic_continuity} it follows that $u_n\in\mathcal{S}_{q,T}$ for every $n\geq 2$. We prove that $(u_n)_{n\in\mathbb{N}}$ is a Cauchy sequence in $\mathcal{S}_{q,T}$. Let $w_n=u_{n+1}-u_n\in\mathcal{S}_{q,T}$. By using the Burkholder-Davis-Gundy inequality, the Lipschitz property of $f$ as well as Minkowski's integral inequality we get 
\begin{align*}
&\mathbb{E}\left[w_{n+1}(t,x)^q\right]\\
&= \mathbb{E}\left[\left|\int_0^t\int_0^1P(t-s,x,y)\left(f(s,u_{n+1}(s,y))-f\left(s,u_n(s,y)\right)\right)\xi(s,y)d\mu(y)ds\right|^{q}\right]\\
&\leq C_q\mathbb{E}\left[\left|\int_0^t\int_0^1P^2(t-s,x,y)\left(f(s,u_{n+1}(s,y))-f\left(s,u_n(s,y)\right)\right)^2d\mu(y)ds\right|^{\frac{q}{2}}\right]\\
&\leq C_q L^q\mathbb{E}\left[\left|\int_0^t\int_0^1 P^2(t-s,x,y)w_n^2(s,y)d\mu(y) ds\right|^{\frac{q}{2}}\right]\\
&\leq  C_q L^q\left(\int_0^t\int_0^1 P^2(t-s,x,y)\left(\mathbb{E}\left[|w_n(s,y)|^q\right]\right)^{\frac{2}{q}}	d\mu(y)ds\right)^{\frac{q}{2}}\\
&\leq  C_q L^q \sup_{t\in[0,T]}\left\lVert P(t,x,\cdot)\right\rVert^{q}_{\mu} \left(\int_0^t\sup_{x\in[0,1]}\left(\mathbb{E}\left[|w_n(s,y)|^q\right]\right)^{\frac{2}{q}}	ds\right)^{\frac{q}{2}}.
\end{align*}
Set  $H_n(t)=\sup_{x\in[0,1]}\left(\mathbb{E}\left[|w_n(t,y)|^q\right]\right)^{\frac{2}{q}}$ for $n\geq 2,~ t\in[0,T]$. Then for every $n\geq 2$ there exists a constant $\kappa_n$ such that $|H_n(t)|\leq \kappa_n$ for every $t\in[0,T]$. With Proposition \ref{propagator_prop} it follows for $(t,x)\in[0,T]\times[0,1]$
\begin{align*}
&\left(\mathbb{E}\left[w_{n+1}(t,x)^q\right]\right)^{\frac{2}{q}}\leq C_q^{\frac{2}{q}}L^2\sup_{t\in[0,T]}\left\lVert P(t,x,\cdot)\right\rVert_{\mu}^2\int_0^t H_n(s)ds
\end{align*}
and thus
\begin{align*}
H_{n+1}(t)\leq C_q^{\frac{2}{q}}L^2 \sup_{t\in[0,T]}\left\lVert P(t,x,\cdot)\right\rVert_{\mu}^2 \int_0^t H_n(s)ds.
\end{align*}
With $\kappa\coloneqq C_q^{\frac{2}{q}}L^2 \sup_{t\in[0,T]}\left\lVert P(t,x,\cdot)\right\rVert_{\mu}^2$ we see that $H_3(t)\leq \kappa\kappa_2t$ and deduce inductively 
\begin{align*}
H_{n+2}(t)\leq \kappa_2\frac{(\kappa t)^{n}}{n!},~~ n\geq 1.
\end{align*}
The series $\sum_{n\geq 3} H_n^{\frac{1}{2}}(t)$ is uniformly convergent on $[0,T]$, which can be verified by the ratio test using that $\sqrt{\frac{H_{n+1}(t)}{H_n(t)}}\leq \sqrt{\frac{\kappa t}{n+1}}$ for $n\geq 2$. We conclude  \[\sup_{t\in[0,T]}\sqrt{H_n(t)}\to 0,~ n\to\infty,\] 
which implies the same for $\left\lVert w_n\right\rVert_{q,T}$. Hence, $(u_n)_{n\geq 2}$ is Cauchy in $\mathcal{S}_{q,T}$ and we denote the limit by $u$. To verify that $u$ satisfies \eqref{mild_solution_wave} we take the limit in $L^q(\Omega)$ for $n\to\infty$ on both sides of \eqref{picard_equ} for every $(t,x)\in[0,T]\times[0,1]$. We get $u(t,x)$ on the left-hand side for any $(t,x)\in[0,T]\times[0,1]$. For the right-hand side we note that for $(t,x)\in[0,T]\times[0,1]$
\begin{align*}
&\mathbb{E}\left[\left|\int_0^t\int_0^1P_b(t-s,x,y)\left(f(s,u(s,y))-f\left(s,u_n(s,y)\right)\right)\xi(s,y)d\mu(y)ds\right|^{q}\right]\\
&\leq  C_q L^q\left(\int_0^t\int_0^1 P_b^2(t-s,x,y)\left(\mathbb{E}\left[|u(s,y)-u_n(s,y)|^q\right]\right)^{\frac{2}{q}}	d\mu(y)ds\right)^{\frac{q}{2}},
\end{align*}
which goes to zero as $n$ tends to infinity with the same argumentation as before.
\end{proof}

We have computed different temporal Hölder exponents. The following lemma shows which one is greater.

\begin{lem} \label{hoelder_exponent_lemma}
Let $r_1,...,r_N$ and $\mu_1,...,\mu_N$ be arbitrary, but chosen according to the conditions given in section \ref{section_1}. Then,
\begin{align}
\left(d_H+1+\frac{\log \nu_{\min}}{\log r_{\max}}\right)^{-1}\leq 2-(2+\delta)\gamma.\notag
\end{align}
\end{lem}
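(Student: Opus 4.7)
My plan is to bound the left-hand side by $1-\gamma$ and then compare $1-\gamma$ to the right-hand side via the standing hypothesis. The main (and essentially only nontrivial) ingredient is the inequality $\gamma \leq d_H/(d_H+1)$ for the spectral exponent, which I expect to be the key observation. To prove it I would apply Hölder's inequality with conjugate exponents $p = (d_H+1)/d_H$ and $q = d_H+1$, splitting $(\mu_i r_i)^{d_H/(d_H+1)} = \mu_i^{d_H/(d_H+1)}\, r_i^{d_H/(d_H+1)}$:
\begin{align*}
\sum_{i=1}^N (\mu_i r_i)^{\frac{d_H}{d_H+1}} \;\leq\; \Bigl(\sum_{i=1}^N \mu_i\Bigr)^{\frac{d_H}{d_H+1}} \Bigl(\sum_{i=1}^N r_i^{d_H}\Bigr)^{\frac{1}{d_H+1}} \;=\; 1 \cdot 1 \;=\; 1.
\end{align*}
Since $\mu_i r_i < 1$ for every $i$, the map $s \mapsto \sum_i (\mu_i r_i)^s$ is strictly decreasing, and it equals $1$ at $s = \gamma$ by definition of the spectral exponent; this forces $\gamma \leq d_H/(d_H+1)$, equivalently $\gamma/(1-\gamma) \leq d_H$.

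Next I would verify that $a \coloneqq \log\nu_{\min}/\log r_{\max}$ is nonnegative. Indeed, the identity $\sum_i r_i^{d_H}\nu_i = \sum_i \mu_i = 1$ forces $\nu_{\min} \leq 1$, so $\log\nu_{\min} \leq 0$, and $\log r_{\max} < 0$ makes the ratio nonnegative. Combining with the previous step,
\begin{align*}
d_H + 1 + a \;\geq\; d_H + 1 \;\geq\; \frac{\gamma}{1-\gamma} + 1 \;=\; \frac{1}{1-\gamma},
\end{align*}
which rearranges to $1/(d_H+1+a) \leq 1-\gamma$.

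Finally I would invoke Condition \ref{hypoi}, namely $(1+\delta)\gamma < 1$, which is equivalent to $1-\gamma \leq 2-(2+\delta)\gamma$. Chaining the two inequalities yields
\begin{align*}
\frac{1}{d_H + 1 + a} \;\leq\; 1-\gamma \;\leq\; 2-(2+\delta)\gamma,
\end{align*}
which is the claim. The only nontrivial step is the Hölder bound $\gamma \leq d_H/(d_H+1)$; everything else is algebraic bookkeeping using the standing hypothesis.
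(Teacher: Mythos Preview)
Your argument is valid, and the H\"older-inequality bound $\gamma \leq d_H/(d_H+1)$ is a clean observation that the paper does not make. But your final step invokes Condition~\ref{hypo}\ref{hypoi}, and the lemma as stated does not assume it: the hypothesis is that the $r_i,\mu_i$ are \emph{arbitrary} subject only to the basic IFS conditions of Section~\ref{section_1}. The paper's own proof confirms this reading by never appealing to Condition~\ref{hypo}\ref{hypoi}.

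The paper's route is different. It shows directly that
\[
d_H+1+\frac{\log\nu_{\min}}{\log r_{\max}}\;\geq\; 2+\max_{i}\frac{\log\mu_i-\log r_i}{\log r_i},
\]
recognises the reciprocal of the right-hand side as $\min_i\frac{\log r_i}{\log(\mu_i r_i)}=1-\gamma\delta$, and then finishes via the unconditional bound $\gamma\leq\tfrac12$ (Cauchy--Schwarz on $\sum_i(\mu_i r_i)^{1/2}$, using $\sum_i r_i\leq 1$), since $1-\gamma\delta\leq 2-(2+\delta)\gamma$ is equivalent to $2\gamma\leq 1$. Your intermediate bound $1/(d_H+1+a)\leq 1-\gamma$ is conceptually neater, but the final comparison $1-\gamma\leq 2-(2+\delta)\gamma$ is exactly $(1+\delta)\gamma\leq 1$, which need not hold under the basic setup alone. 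For the application to Theorem~\ref{main_theorem} your proof is adequate, since Condition~\ref{hypo}\ref{hypoi} is assumed there; but as a proof of the lemma in the stated generality, the last step is a gap.
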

\begin{proof}
We have
\begin{align}
&\frac{\min_{i=1,...,N}\log\mu_i-\log r_i^{d_H}}{\max_{i=1,...,N}\log r_i}+d_H+1\notag\\
&~~~~=\frac{\min_{i=1,...,N}\log\mu_i-\log r_i^{d_H}}{\max_{i=1,...,N}\log r_i}-(1-d_H)+2\notag\\
&~~~~\geq \max_{i=1,...,N}\frac{\log\mu_i-\log r_i^{d_H}}{\log r_i}-(1-d_H)+2\notag\\
&~~~~= \max_{i=1,...,N}\frac{\log\mu_i-\log r_i+(1-d_H)\log r_i}{\log r_i}-(1-d_H)+2\notag\\
&~~~~= \max_{i=1,...,N}\frac{\log\mu_i-\log r_i}{\log r_i}+2.\notag
\end{align}
Using that as well as the fact that $\gamma<\frac{1}{2}$,
\begin{align*}
\left(d_H+1+\frac{\log \nu_{\min}}{\log r_{\max}}\right)^{-1}
&\leq \left(\max_{i=1,...,N}\frac{\log\mu_i-\log r_i}{\log r_i}+2\right)^{-1}\\
&=\min_{i=1,...,N}\left(\frac{\log\mu_i-\log r_i}{\log r_i}+2\right)^{-1}\\
&=\min_{i=1,...,N} \frac{\log r_i}{\log\mu_i+\log r_i}\\
&=\min_{i=1,...,N}\left(1-\frac{\log \mu_i}{\log\mu_i+\log r_i}\right)\\
&= \left(1-\max_{i=1,...,N}\frac{\log \mu_i}{\log\mu_i+\log r_i}\right)\\
&= \left(1-\gamma\delta\right)\\
&< 2-2\gamma-\gamma\delta.
\end{align*}
\end{proof}

Using this lemma and the established continuity properties (compare Proposition \ref{v_2_continuity} and Proposition \ref{stochastic_continuity}), the main result of this paper, Theorem \ref{main_theorem}, is a direct consequence of Kolmogorov's continuity theorem.

\begin{figure}[t]
\centering
\includegraphics[scale=0.4]{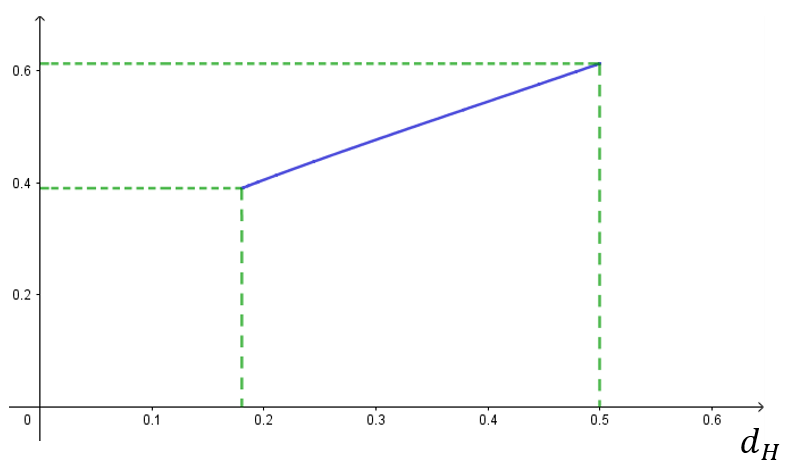}
\caption{Hölder exponent graphs}
\label{hoelder_plot_2}
\end{figure}

\begin{exa}
We discussed the case of $\mu$ being the natural on a given Cantor-like set in Section \ref{section_1}.
If $\mu$ is not the natural measure on a given Cantor-like set, then
$\nu_{\min}$ does not vanish. 
As an example, consider the Classical Cantor set with weights $\mu_1,\mu_2\in(0,1)$.  If $u_0$, $u_1$ and $f$ satisfy Assumption \ref{hypo} and $f$ is uniformly bounded, $q$ can be chosen arbitrarily large. We then have
as ess. temporal Hölder exponent
\begin{align*}
\frac{1}{d_H+1+\frac{\log(\nu_{\min})}{\log(r_{\max)}}} = \frac{1}{\frac{\log 2}{\log 3}+1-\frac{\log\mu_{\min}+\log 2}{\log 3} } = \frac{1}{1-\frac{\log(\mu_{\min})}{\log 3}},
\end{align*}
 provided that $\delta+1<\frac{1}{\gamma}$. This is satisfied if \[\max_{i=1,2} \frac{\log \mu_i}{\log \frac{\mu_i}{3}}+\frac{\log 2}{\log 6}<1,\] which holds for $\mu_1,\mu_2$ such that $\min_{i=1,2}\mu_i>0.18$.
The behaviour of the temporal Hölder exponent is visualized on the right-hand side of Figure \ref{hoelder_plot_2}.
\end{exa}

\subsection{Intermittency}
Let $\varepsilon\geq 0$. According to \cite{CJI} we call the mild solution of a stochastic wave equation $u$ weakly intermittent on $[\varepsilon,1-\varepsilon]$ if for the upper moment Lyapunov exponents
\begin{align*}
\bar\gamma(p,x)\coloneqq \limsup_{t\to\infty}\frac{1}{t}\log\mathbb{E}\left[u(t,x)^2\right]
\end{align*}
it holds 
\begin{align*}
\bar\gamma(2,x)>0,~~~ \bar\gamma(p,x)<\infty, ~~ x\in[\varepsilon,1-\varepsilon], ~p\in [2,\infty).
\end{align*}

In this section we make the following additional assumption:
\begin{hyp}
We assume Condition \ref{hypo} with $q\geq 2$ and that $f$ fulfills the following Lipschitz and linear growth condition: For all $(w,t,x)\in\Omega\times[0,T]\times\mathbb{R}$ there exists a constant $L>0$ such that
\begin{align*}
|f(\omega,t,x)-f(\omega,t,y)|&\leq L|x-y|,\\
|f(\omega,t,y)|&\leq L(1+|x|).
\end{align*}
\end{hyp}
\begin{satz}
Let $p\geq 1$. Then there exists constants $C_9,C_{10}>0$ such that for $(t,x)\in[0,\infty)\times[0,1]$
\begin{align*}
\mathbb{E}\left[\left|u(t,x)\right|^p\right]\leq C_9e^{C_{10}p^2t}.
\end{align*}
\end{satz}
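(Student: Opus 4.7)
I would use the mild formulation $u = v_1 + v_2 + v_3$ as in \eqref{v1}--\eqref{v3} with $v_0 = u$. The deterministic pieces $v_2, v_3$ are uniformly bounded in $x$ by an at-most-linear function of $t$: writing out the eigenexpansions and applying Cauchy--Schwarz, using $u_0\in\mathcal{D}(\Delta_\mu^b)$, $u_1\in\mathcal{D}((-\Delta_\mu^b)^{1/2})$, the eigenfunction bound $\|\varphi_k^b\|_\infty \leq C_2 k^{\delta/2}$ from Theorem \ref{eigenfunction_estimate_theorem}, and $\lambda_k^b \asymp k^{1/\gamma}$, one obtains uniformly bounded infinite-sum parts by Hypothesis \ref{hypo}. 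The only contribution that grows with $t$ is the constant-mode term $t\,u_{1,1}^N$ in the Neumann case, so $|v_2(t,x)| + |v_3(t,x)| \leq A + Bt$ uniformly in $x$.

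For the stochastic term $v_1$ I apply the Burkholder--Davis--Gundy inequality with its sharp Gaussian constant $c_p = O(p^{p/2})$, valid for $p \geq 2$:
\begin{align*}
\mathbb{E}\bigl[|v_1(t,x)|^p\bigr] \leq c_p\, \mathbb{E}\!\left[\left(\int_0^t\!\int_0^1 P_b^2(t-s,x,y)\, f^2(s,u(s,y))\, d\mu(y)\, ds\right)^{\!p/2}\right],
\end{align*}
and Minkowski's integral inequality together with the linear growth $|f(s,u)|\leq L(1+|u|)$ yields
\begin{align*}
\mathbb{E}\bigl[|v_1(t,x)|^p\bigr]^{2/p} \leq 2L^2 c_p^{2/p} \int_0^t \bigl(1 + M_p(s)\bigr)\, \Bigl(\sup_{x}\|P_b(t-s,x,\cdot)\|_\mu^2\Bigr)\, ds,
\end{align*}
where $M_p(t) := \sup_x \mathbb{E}[|u(t,x)|^p]^{2/p}$ and $c_p^{2/p} = O(p)$. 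Using Lemma \ref{propagator_prop} (and, in the Neumann case, separating the linearly growing mode-one contribution, which produces at most a polynomial factor $\Pi(t-s)$) to control the inner supremum and combining the three-term decomposition by the $L^p$-triangle inequality, one arrives at an integral inequality of the form
\begin{align*}
M_p(t) \leq K_1(1+t^2) + K_2\, p \int_0^t \bigl(1 + M_p(s)\bigr)\, \Pi(t-s)\, ds,
\end{align*}
with $K_1, K_2$ independent of $p$.

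Gronwall's inequality then yields $M_p(t) \leq K_3(1+t^2)\, e^{K_4 p t}$; raising both sides to the $(p/2)$-th power, absorbing the polynomial prefactor $(1+t^2)^{p/2}$ into the exponential for large $t$ and into the constant for small $t$, and reducing the case $1\leq p < 2$ to $p=2$ by Jensen's inequality, produces the claimed bound $\mathbb{E}[|u(t,x)|^p] \leq C_9\, e^{C_{10}\, p^2 t}$. The main obstacle is the careful tracking of $p$-dependence: the quadratic exponent $p^2$ is obtained as the composition of the BDG factor $c_p^{2/p} = O(p)$ entering the Gronwall estimate with the final power $p/2$ applied when returning from $M_p$ to the $p$-th moment, and it is essential to use the $L^p$-triangle (Minkowski) inequality rather than the crude bound $(a+b+c)^p \leq 3^{p-1}(a^p+b^p+c^p)$ on the three-term decomposition, since the latter would introduce spurious $p^p$-factors that destroy the $p^2$ scaling.
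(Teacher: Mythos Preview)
Your approach is correct and essentially parallel to the paper's. Both arguments decompose $u = v_1+v_2+v_3$, bound the deterministic parts, apply Burkholder--Davis--Gundy with the sharp constant $O(\sqrt{p})$ at the level of the $L^p(\Omega)$-norm together with Minkowski's integral inequality to the stochastic term, and reduce $1\le p<2$ to $p=2$ via Jensen. The only methodological difference is in how the resulting functional inequality is closed: the paper introduces an exponential weight $e^{-\alpha t}$, takes the supremum of $e^{-\alpha t}\lVert u(t,x)\rVert_{L^p(\Omega)}$ over all $(t,x)$, and chooses $\alpha\propto p$ so that the coefficient in front of this supremum on the right-hand side becomes strictly less than $1$; you instead set up an integral inequality for $M_p(t)=\sup_x\lVert u(t,x)\rVert_{L^p(\Omega)}^2$ and invoke Gronwall. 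These are equivalent devices---the exponential weight is precisely what Gronwall produces---and both deliver the $p^2$ exponent for the same reason (one factor of $p$ from the BDG constant entering the integral inequality, a second from raising to the $p/2$-th power at the end). Your remark about using the $L^p$-triangle inequality rather than the crude power bound to avoid spurious $p^p$ factors is exactly what the paper achieves implicitly by working throughout with $(\mathbb{E}|u|^p)^{1/p}$.

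One point where you are actually more careful than the paper: in the Neumann case the constant mode contributes $t\,u_{1,1}^N$ to $v_2$, so $v_2+v_3$ is not uniformly bounded on $[0,\infty)$ but only grows linearly, as you note. The paper asserts a uniform bound here (and Lemma~\ref{propagator_prop} likewise claims a $t$-uniform bound on $\lVert P_N(t,x,\cdot)\rVert_\mu$ whose proof only yields a bound depending on $T$). Your allowance for a polynomial prefactor $\Pi$ handles this correctly and is harmless for the conclusion $\bar\gamma(p)\le C_{10}p^2$, since the prefactor contributes nothing to the $\limsup$.
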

\begin{proof}
$v_2$ and $v_3$ are uniformly bounded on $[0,\infty)\times[0,1]$. This can be verified with the same methods as in the proof of Proposition \ref{stochastic_continuity}. For example, for $b=D$ and $(t,x)\in[0,\infty)\times[0,1]$
\begin{align*}
\left|\sum_{k=1}^{\infty}\frac{\sin\left(\sqrt{\lambda_k^D}t\right)}{\sqrt{\lambda_k^D}}\varphi_k^D(x)u_{1,k}^D\right|
&\leq \sum_{k=1}^{\infty} C_0^{-\frac{1}{2}}C_2C_5 k^{-\frac{1}{2\gamma}}k^{\frac{1}{2}}k^{-\frac{1}{2\gamma}}
\leq C_0^{-\frac{1}{2}}C_2C_5\sum_{k=1}^{\infty}  k^{\frac{1}{2}-\frac{1}{\gamma}}<\infty.
\end{align*}

 Hence, there exists a constant $K>0$ such that for $i\in\{2,3\}$, $(t,x)\in[0,\infty)\times[0,1]$ $v_i(t,x)\leq K, i=2,3$.
It follows by using the Burkholder-Davis-Gundy inequality as well as Minkowski's integral inequality,
\begin{align*}
&e^{-\alpha t}\left(\mathbb{E}\left[\left|u(t,x)\right|^p\right]\right)^{\frac{1}{p}}\\
&\leq e^{-\alpha t}2K+2\sqrt{p}\left(\mathbb{E}\left[\left|\int_0^t\int_0^1 e^{-\alpha t}P_b(t-s,x,y)f(s,u(s,y))\xi(s,y)d\mu(y) ds\right|^p\right]\right)^{\frac{1}{p}}\\
&\leq e^{-\alpha t}2K+2\sqrt{p}\left(\int_0^t\int_0^1 e^{-2\alpha t}P_b^2(t-s,x,y)\left(\mathbb{E}\left[|f(s,u(s,y))|^p\right]\right)^{\frac{2}{p}}d\mu(y) ds\right)^{\frac{1}{2}}\\
&\leq e^{-\alpha t}2K+L2\sqrt{p}\left(\int_0^t\int_0^1 e^{-2\alpha t}P_b^2(t-s,x,y)\sup_{z\in[0,1]}\left(1+\left(\mathbb{E}\left[|u(s,z))|^p\right]\right)^{\frac{1}{p}}\right)^2d\mu(y) ds\right)^{\frac{1}{2}}\\
&\leq e^{-\alpha t}2K+L2\sqrt{p}\left(\int_0^t\int_0^1 e^{-2\alpha (t-s)}P_b^2(t-s,x,y)\sup_{z\in[0,1]}\left(e^{-\alpha s}+e^{-\alpha s}\left(\mathbb{E}\left[|u(s,z))|^p\right]\right)^{\frac{1}{p}}\right)^2d\mu(y) ds\right)^{\frac{1}{2}}\\
&\leq e^{-\alpha t}2K\\
&~~~+L2\sqrt{p}\sup_{s,z\in[0,T]\times[0,1]}\left(e^{-\alpha s}+e^{-\alpha s}\left(\mathbb{E}\left[|u(s,z))|^p\right]\right)^{\frac{1}{p}}\right)\left(\int_0^t\int_0^1 e^{-2\alpha (t-s)}P_b^2(t-s,x,y)d\mu(y) ds\right)^{\frac{1}{2}}\\
&\leq e^{-\alpha t}2K+L2\sqrt{p}\left(1+\sup_{s,z\in[0,T]\times[0,1]}e^{-\alpha s}\left(\mathbb{E}\left[|u(s,z))|^p\right]\right)^{\frac{1}{p}}\right)\left(C_3^2\int_0^te^{-2\alpha (t-s)} ds\right)^{\frac{1}{2}}\\
&\leq e^{-\alpha t}2K+\frac{C_3^2L2\sqrt{p}}{\sqrt{2\alpha}}\left(1+\sup_{s,z\in[0,T]\times[0,1]}e^{-\alpha s}\left(\mathbb{E}\left[|u(s,z))|^p\right]\right)^{\frac{1}{p}}\right).\\
\end{align*}
Choose $\alpha= 8C_3^4L^2p$. Then it follows
\begin{align*}
\left(\mathbb{E}\left[\left|u(t,x)\right|^p\right]\right)^{\frac{1}{p}}\leq 4K+e^{\alpha t}=4K+e^{8C_4^4L^2pt}.
\end{align*}
For $p\in[1,2)$ we have for $(t,x)\in[0,\infty)\times[0,1]$
\begin{align*}
\left(\mathbb{E}\left[\left|u(t,x)\right|^p\right]\right)^{\frac{1}{p}}\leq
\left(\mathbb{E}\left[\left|u(t,x)\right|^2\right]\right)^{\frac{1}{2}}\leq
4K+e^{16C_3^4L^2t}\leq
4K+e^{16C_3^4L^2pt}.
\end{align*}
\end{proof}
From the above proposition, it follows immediately for $p\geq 1$
\begin{align*}
\bar\gamma(p)=\limsup_{t\to\infty}\frac{1}{t}\sup_{x\in[0,1]}\log\mathbb{E}\left[|u(t,x)|^p\right]
&\leq \limsup_{t\to\infty} \frac{\log C_9}{t}+C_{10}p^2.
\\& =C_{10}p^2.
\end{align*}
\pagebreak
\begin{satz}
Assume $\displaystyle\inf_{x\in[0,1]}\left|f(x)/x\right|>0.$
\begin{enumerate}
\item Let $b=N$, $\inf_{x\in[0,1]}u_0(x)>0$ and $\inf_{x\in[0,1]}u_1(x)>0$. Then, there exists a constant $\kappa$ such that $\bar\gamma(2,x)\geq\kappa$ for all $x\in[0,1]$. 
\item Let $b=D$, $\varepsilon>0$, $\inf_{x\in[\varepsilon,1-\varepsilon]}u_0(x)>0$ and $\inf_{x\in[\varepsilon,1-\varepsilon]}u_1(x)>0$. Then, there exists a constant $\kappa_{\varepsilon}$ such that $\bar\gamma(2,x)\geq\kappa_{\varepsilon}$ for all $x\in[\varepsilon,1-\varepsilon]$. 
\end{enumerate}
\end{satz}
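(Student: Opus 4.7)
My plan is to derive a Volterra-type integral inequality for the second moment $F(t,x):=\mathbb{E}[u(t,x)^2]$ and extract its exponential growth rate via a Laplace-transform argument in the spirit of Foondun--Khoshnevisan. Write $u=V+I$, where $V(t,x)=v_2(t,x)+v_3(t,x)$ is the deterministic part and $I(t,x)$ is the stochastic convolution; since $\mathbb{E}[I(t,x)]=0$, Walsh's isometry yields
\[ F(t,x) = V(t,x)^2 + \int_0^t\int_0^1 P_b^2(t-s,x,y)\,\mathbb{E}[f(s,u(s,y))^2]\,d\mu(y)\,ds. \]
Setting $L_f:=\inf_{x}|f(x)/x|>0$ and using $f(s,z)^2\geq L_f^2 z^2$ gives the fundamental bound
\[ F(t,x) \;\geq\; V(t,x)^2 + L_f^2\int_0^t\int_0^1 P_b^2(t-s,x,y)F(s,y)\,d\mu(y)\,ds. \]

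Next, I reduce to a scalar inequality. Let $S=[0,1]$ (Neumann) or $S=[\varepsilon,1-\varepsilon]$ (Dirichlet), and set $\underline F(t):=\inf_{x\in S}F(t,x)$. Using orthonormality of $\{\varphi_k^b\}$ in the spectral expansion of $P_b$,
\[ \int_0^1 P_N^2(r,x,y)\,d\mu(y) = r^2+\sum_{k\geq 2}\frac{\sin^2(\sqrt{\lambda_k^N}\,r)}{\lambda_k^N}\varphi_k^N(x)^2\;\geq\;r^2 \]
holds uniformly in $x$, while in the Dirichlet case retaining only the first spectral term yields
\[ \int_0^1 P_D^2(r,x,y)\,d\mu(y)\;\geq\;\frac{\sin^2(\sqrt{\lambda_1^D}\,r)}{\lambda_1^D}\,\varphi_1^D(x)^2\;\geq\;\frac{c_\varepsilon^2\sin^2(\sqrt{\lambda_1^D}\,r)}{\lambda_1^D}, \]
for $x\in S$, where $c_\varepsilon:=\inf_{x\in S}\varphi_1^D(x)>0$ by the strict positivity of the ground state on $(0,1)$. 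With kernel $k(r)=r^2$ or $k(r)=c_\varepsilon^2\sin^2(\sqrt{\lambda_1^D}\,r)/\lambda_1^D$ and $h(t):=\inf_{x\in S}V(t,x)^2$ one obtains
\[ \underline F(t)\;\geq\;h(t)+L_f^2\int_0^t k(t-s)\underline F(s)\,ds. \]
Because $V(0,x)=u_0(x)>0$ on $S$ and $V$ is continuous in $t$ by Proposition \ref{v_2_continuity}, $h$ is strictly positive on a neighbourhood of $0$; hence $\tilde h(\alpha):=\int_0^\infty e^{-\alpha t}h(t)\,dt>0$ for every $\alpha\geq 0$.

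Applying Fubini-Tonelli (all integrands are nonnegative) converts the Volterra inequality into
\[ \tilde F(\alpha)\;\geq\;\tilde h(\alpha)+L_f^2\,\tilde k(\alpha)\,\tilde F(\alpha), \]
with $\tilde F(\alpha):=\int_0^\infty e^{-\alpha t}\underline F(t)\,dt$ finite for $\alpha$ large enough by the preceding exponential moment bound. Explicit computation gives $\tilde k(\alpha)=2/\alpha^3$ (Neumann) and $\tilde k(\alpha)=2c_\varepsilon^2/[\alpha(\alpha^2+4\lambda_1^D)]$ (Dirichlet). Let $\kappa$ (respectively $\kappa_\varepsilon$) be the unique positive solution of $L_f^2\tilde k(\kappa)=1$. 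For any $\alpha<\kappa$ we have $L_f^2\tilde k(\alpha)>1$, and the inequality together with $\tilde h(\alpha)>0$ forces $\tilde F(\alpha)=\infty$. Divergence of the Laplace transform of $\underline F$ at every $\alpha<\kappa$ yields $\limsup_{t\to\infty}t^{-1}\log\underline F(t)\geq\kappa$, and hence $\bar\gamma(2,x)\geq\kappa$ for all $x\in S$, since $F(t,x)\geq\underline F(t)$.

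The main obstacle is ensuring the Laplace-transform step is rigorous: one must simultaneously know $\tilde F(\alpha)<\infty$ for sufficiently large $\alpha$ (so that the contradiction argument applies) and then propagate the conclusion $\tilde F(\alpha)=\infty$ back to all $\alpha<\kappa$. The finiteness is supplied by the upper moment bound of the preceding proposition, and the reduction to $\underline F$ avoids Harnack-type arguments which are unavailable here because the wave propagator is not smoothing. A secondary technical point is the Dirichlet lower bound $c_\varepsilon>0$ on the ground state; this is the standard consequence of the Perron--Frobenius principle for the semigroup generated by $\Delta_\mu^D$, since $\varphi_1^D$ is continuous and positive on the open interval $(0,1)$ by irreducibility of the associated Dirichlet form on $\mathcal{F}_0$.
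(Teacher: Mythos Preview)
Your argument follows the paper's line closely: Walsh's isometry gives the second-moment identity, the deterministic part $V$ is bounded below on a short time interval by continuity, the convolution structure becomes a product under the Laplace transform, and divergence of the transform below a threshold yields the lower Lyapunov bound. The only cosmetic difference is that you collapse to the scalar $\underline F(t)=\inf_{x\in S}F(t,x)$ before transforming, whereas the paper iterates the $x$-dependent inequality for $M_\beta(x)=\int_0^\infty e^{-\beta t}F(t,x)\,dt$; the resulting kernels $\tilde k(\alpha)=2/\alpha^3$ and $2c_\varepsilon^2/[\alpha(\alpha^2+4\lambda_1^D)]$ and the thresholds agree with the paper's computation.

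There is, however, a genuine gap in your Dirichlet reduction (and the paper's iteration skates over the very same point). Passing to the scalar Volterra inequality requires
\[
\int_0^1 P_D^2(t-s,x,y)\,F(s,y)\,d\mu(y)\;\ge\;\underline F(s)\int_0^1 P_D^2(t-s,x,y)\,d\mu(y),
\]
i.e.\ $F(s,y)\ge\underline F(s)$ for \emph{every} $y\in[0,1]$. But $\underline F(s)$ is the infimum over $S=[\varepsilon,1-\varepsilon]$ only, and under Dirichlet conditions $\varphi_k^D(0)=\varphi_k^D(1)=0$ forces $u(s,0)=u(s,1)=0$ a.s., hence $F(s,0)=F(s,1)=0<\underline F(s)$; the bound fails near the boundary. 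The same issue arises in the paper when it replaces $M_\beta(y)$ by the constant $K_{\beta,\varepsilon}$ inside the $d\mu(y)$-integral over $[0,1]$. A clean repair is to abandon $\underline F$ in favour of the weighted functional $G(t):=\int_0^1 F(t,y)\,\varphi_1^D(y)^2\,d\mu(y)$: inserting $P_D(r,x,y)\ge \lambda_1^{-1/2}\sin(\sqrt{\lambda_1^D}\,r)\varphi_1^D(x)\varphi_1^D(y)$ is not legitimate termwise, but testing the mild equation against $\varphi_1^D$ and squaring, or equivalently using $\int_0^1 P_D^2(r,x,y)\phi(y)\,d\mu(y)\ge \bigl(\int_0^1 P_D(r,x,y)\varphi_1^D(y)\,d\mu(y)\bigr)^2\inf_S\phi/\|\varphi_1^D\|_\infty^2$ after restricting to $S$, yields a closed renewal inequality for a quantity comparable to $\underline F$ on $S$, and the rest of your Laplace argument then goes through with an adjusted constant in $k(r)$.
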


\begin{proof}
Let $\varepsilon\geq 0$, $\inf_{x\in[\varepsilon,1-\varepsilon]}u_0(x)>0$, $\inf_{x\in[\varepsilon,1-\varepsilon]}u_1(x)>0$ and $x\in[\varepsilon,1-\varepsilon]$. It suffices to find a constant $\beta_{\epsilon}>0$ 
\begin{align}
\int_0^{\infty}e^{-\beta t}\mathbb{E}\left[u(t,x)^2\right]dt=\infty~ \text{ for all } ~\beta\leq\beta_{\epsilon} \label{intermittency_proof}
\end{align}
(see the proof of \cite[Theorem 3.3]{CJI}). 
By using Walsh's isometry and the zero-mean property of the stochastic integral we get
\begin{align*}
\mathbb{E}\left[u(t,x)^2\right]dt
&=\left(v_2(t,x)+v_3(t,x)\right)^2+\int_0^t\int_0^1 P_b^2(t-s,x,y)\mathbb{E}\left[f(u(s,y))^2\right]d\mu(y)ds\\
&~~~+\left(v_2(t,x)+v_3(t,x)\right)^2\mathbb{E}\left[\int_0^t\int_0^1P_b(t-s,x,y)f(u(s,y))\xi(s,y)d\mu(y)ds\right]\\
&=\left(v_2(t,x)+v_3(t,x)\right)^2+\int_0^t\int_0^1 P_b^2(t-s,x,y)\mathbb{E}\left[f(u(s,y))^2\right]d\mu(y)ds
\end{align*}
and thus, by Laplace transformation,
\begin{align*}
&\int_0^{\infty}e^{-\beta t}\mathbb{E}\left[u(t,x)^2\right]dt \\
&= \int_0^{\infty}e^{-\beta t}\left(v_2(t,x)+v_3(t,x)\right)^2dt+ \int_0^{\infty}e^{-\beta t}\int_0^t\int_0^1 P_b^2(t-s,x,y)\mathbb{E}\left[f(u(s,y))^2\right]d\mu(y)dsdt.
\end{align*}
In order to bound the first term on the right-hand side from below, we note that $v_2(0,x)=u_1(x)\geq\inf_{x\in[\varepsilon,1-\varepsilon]}u_1(x)> 0$ and $v_3(0,x)=u_0(x)\geq\inf_{x\in[\varepsilon,1-\varepsilon]}u_0(x)> 0$. Using that both functions are Hölder-continuous in $t$ uniformly for all $x\in[0,1]$ (see Proposition \ref{v_2_continuity}), we obtain the existence of a constant $t_{\varepsilon}>0$ such that 
\begin{align*}
v_3(t,x)>\frac{u_0}{2},  ~~ v_2(t,x)>-\frac{u_0}{4}, ~~ t\in[0,t_{\varepsilon.}] 
\end{align*}
We see that $v_2(t,x)+v_3(t,x)>\frac{u_0}{4}$ for all $(t,x)\in[0,t_{\varepsilon}]\times[\varepsilon,1-\varepsilon]$. It follows that  for all $\beta>0$ there exists a constant $K_{\beta,\varepsilon}$ such that
\begin{align*}
&\int_0^{\infty}e^{-\beta t}\mathbb{E}\left[u(t,x)^2\right]dt \\
&\geq  K_{\beta,\varepsilon} + L_{\varepsilon}^2\int_0^{\infty}e^{-\beta t}\int_0^t\int_0^1 P_b^2(t-s,x,y)\mathbb{E}\left[(u(s,y)^2\right]d\mu(y)dsdt,
\end{align*}
where $K_{\beta,\varepsilon} = \frac{u_0^2}{16\beta}$.
Further, for $(x,y,t)\in[0,1]^2\times[0,\infty)$
\begin{align*}
\int_0^t P_b^2(t-s,x,y)\mathbb{E}\left[(u(s,y)^2\right]d\mu(y)ds=\left(P_b(\cdot,x,y)\ast \mathbb{E}\left[u(\cdot,y)^2\right]\right)(t),
\end{align*}
where $\ast$ denotes the time convolution. It holds $\mathcal{L}_{\beta}(f\ast g)=\mathcal{L}_{\beta}f\cdot\mathcal{L}_{\beta}g$, where $\mathcal{L}$ denotes the Laplace transformation. We thus see 
\begin{align*}
&\int_0^{\infty}e^{-\beta t}\int_0^t\int_0^1 P_b^2(t-s,x,y)\mathbb{E}\left[(u(s,y)^2\right]d\mu(y)dsdt\\
&=\int_0^1\int_0^{\infty}e^{-\beta t}\int_0^t P_b^2(t-s,x,y)\mathbb{E}\left[(u(s,y)^2\right]dsdtd\mu(y)\\
&= \int_0^1 \int_0^{\infty}e^{-\beta t}P_b^2(t,x,y)dt\int_0^{\infty}e^{-\beta s}\mathbb{E}\left[u(s,y)^2\right]dsd\mu(y).
\end{align*}
With $M_{\beta}(x)\coloneqq\int_0^{\infty}e^{-\beta s}\mathbb{E}\left[u(s,x)^2\right]ds $ it follows
\begin{align}
M_{\beta}(x)\geq K_{\beta,\varepsilon} + L_{\varepsilon}^2\int_0^1 \int_0^{\infty}e^{-\beta t}P^2_b(t,x,y)M_{\beta}(y)dtd\mu(y).\label{m_beta_1}
\end{align}
If $b=N$, it holds for all for all $t\geq 0$
\begin{align*}
\left\lVert P_N(t,x,\cdot)\right\rVert_{\mu}^2&= t^2+\sum_{k\geq 2}\frac{\sin^2\left(\sqrt{\lambda_k^N t}\right)}{\lambda_k^N}\left(\varphi_k^N\right)^2(x)\\&\geq t^2
\end{align*}
and thus
\begin{align*}
\int_0^{\infty}\int_0^1e^{-\beta t} P_N^2(t,x,y) K_{\beta,\varepsilon}d\mu(y)dt& =K_{\beta,\varepsilon}\int_0^{\infty}e^{-\beta t}\left\lVert P_N(t,x,\cdot)\right\rVert_{\mu}^2dt\\
&\geq K_{\beta,\varepsilon}\int_0^{\infty}e^{-\beta t}t^2 dt\\
&=  2K_{\beta,\varepsilon}\beta^{-3}.
\end{align*}
By iterating this in \eqref{m_beta_1} we obtain
\begin{align*}
M_{\beta}(x)\geq K_{\beta,\varepsilon}\sum_{n=0}^{\infty}\left(2\beta^{-3}\right)^n.
\end{align*}
This sum diverges if and only if $\beta\leq \sqrt[3]{2}$. Hence, we have shown \eqref{intermittency_proof}. \par 

If $b=D$, we define $c_{\varepsilon}\coloneqq  \inf_{x\in[\varepsilon,1-\varepsilon]}\varphi_1^D(x)$ and calculate
\begin{align*}
\int_0^{\infty}\int_0^1e^{-\beta t} P_D^2(t,x,y) K_{\beta,\varepsilon}d\mu(y)dt &\geq K_{\beta,\varepsilon}\int_0^{\infty}e^{-\beta t}\sum_{k=1}^{\infty}\frac{\sin^2\left(\sqrt{\lambda_k^D t}\right)}{\lambda_k^D}\left(\varphi_k^D\right)^2(x) dt\\
&\geq K_{\beta,\varepsilon}\int_0^{\infty}e^{-\beta t}\frac{\sin^2\left(\sqrt{\lambda_1^D }t\right)}{\lambda_1^D}\left(\varphi_1^D\right)^2(x) dt\\
&\geq K_{\beta,\varepsilon}\int_0^{\infty}e^{-\beta t}\frac{\sin^2\left(\sqrt{\lambda_1^D }t\right)}{\lambda_1^D}c_{\varepsilon}dt\\
&= \frac{K_{\beta,\varepsilon}c_{\varepsilon}}{(\lambda_1^D)^{\frac{3}{2}}}\int_0^{\infty}e^{-\frac{\beta}{\sqrt{\lambda_1^D}} t}\sin^2(t)dt >0\\
&= \frac{K_{\beta,\varepsilon}c_{\varepsilon}}{(\lambda_1^D)^{\frac{3}{2}}}\frac{2}{\left(\frac{\beta}{\sqrt{\lambda_1^D}}\right)^3+4\left(\frac{\beta}{\sqrt{\lambda_1^D}}\right)   }>0.
\end{align*}
The last term is strictly positive, since $\varphi_1^D(x)>0$ for all $x\in(0,1)$ (see \cite[Proposition 2.5]{FLZ}) and therefore bounded from below for all $x\in[\varepsilon,1-\varepsilon]$.
By iterating this in \eqref{m_beta_1} we obtain
\begin{align*}
M_{\beta}(x)\geq K^{'}_{\beta,\varepsilon}\sum_{n=0}^{\infty}\left(\frac{2c_{\varepsilon}\left(\lambda_1^D\right)^{-\frac{3}{2}}}{\left(\frac{\beta}{\sqrt{\lambda_1^D}}\right)^3+4\left(\frac{\beta}{\sqrt{\lambda_1^D}}\right)  }\right)^n.
\end{align*}
Let $\bar\beta\coloneqq \frac{\beta}{\sqrt{\lambda_1^D}}$. 
 The above sum is equal to $\infty$ for all $\beta$ such that $\bar\beta^3+4\bar\beta\leq 2c_{\varepsilon}\left(\lambda_1^D\right)^{-\frac{3}{2}}$. This verifies \eqref{intermittency_proof}.
\end{proof}

\appendix
\section{Some Technical Details}

\begin{lem} \label{appendix_restriction_supp}
Let $b\in\{N,D\}$, $\psi: L^2([0,1],\mu)\to L^2(\supp(\mu),\mu)$, $u\to \left.u\right|_{\supp(\mu)}$ and
\[ \widetilde\Delta_{\mu}^b:~ \psi\left(\mathcal{D}\left(\Delta_{\mu}^b\right)\right)\to L^2(\supp(\mu),\mu),~~
  u\to\psi\circ\Delta_{\mu}^b\circ \psi^{-1}u.\] Then,
\begin{enumerate}[label=(\roman*)]
\item 
   $\widetilde\Delta_{\mu}^b$ is self-adjoint, dissipative and has eigenvalues $\lambda_k^b$ with eigenfunctions $\psi\varphi_k^b$, $k\in\mathbb{N}$. In particular, $\widetilde\Delta_{\mu}^b$ is the generator of a unique strongly continuous semigroup $\left(\widetilde T_t^b\right)_{t\geq 0}$. 
\item   
 $\widetilde\mE\left(\widetilde u,\widetilde v\right)\coloneqq\mE(\psi^{-1}\widetilde u,\psi^{-1}\widetilde v)$, $\widetilde u,\widetilde v\in\widetilde{\mathcal{F}}\coloneqq\psi(\mathcal{F})$ defines a Dirichlet form which is associated to $\widetilde\Delta_{\mu}^N$ and $\widetilde\mE\left(\widetilde u,\widetilde v\right), \widetilde u,\widetilde v\in\widetilde{\mathcal{F}_0}\coloneqq\psi(\mathcal{F}_0)$ defines a Dirichlet form associated to $\widetilde\Delta_{\mu}^D$.
\end{enumerate}    
\end{lem}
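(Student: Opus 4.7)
The plan is to observe that $\psi$ is a unitary isomorphism between $L^2([0,1],\mu)$ and $L^2(\supp(\mu),\mu)$, and then everything reduces to transport of structure under unitary conjugation. First I would verify well-definedness and unitarity of $\psi$: two elements of $\mathcal{L}^2([0,1],\mu)$ lie in the same $\mathcal{H}$-equivalence class iff they agree $\mu$-almost everywhere on $[0,1]$, which, since $\supp(\mu)=F$, is equivalent to their restrictions to $F$ agreeing $\mu$-a.e. on $F$. Hence $\psi$ is a well-defined bijection, and the identity $\lVert u\rVert_{\mu}^2=\int_{[0,1]}u^2\,d\mu=\int_F u^2\,d\mu=\lVert\psi u\rVert_{\mu}^2$ shows it is an isometry.

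For part (i), unitary conjugation preserves self-adjointness and dissipativity, so $\widetilde\Delta_{\mu}^b=\psi\circ\Delta_{\mu}^b\circ\psi^{-1}$ inherits both properties from $\Delta_{\mu}^b$. Applying $\psi$ to the eigenvalue equation $\Delta_{\mu}^b\varphi_k^b=-\lambda_k^b\varphi_k^b$ and using that $\psi\varphi_k^b\in\psi(\mathcal{D}(\Delta_\mu^b))$ immediately gives $\widetilde\Delta_{\mu}^b(\psi\varphi_k^b)=-\lambda_k^b(\psi\varphi_k^b)$. The generation of a unique strongly continuous (contraction) semigroup then follows from the Lumer--Phillips/Hille--Yosida theorem, or directly from the spectral theorem for self-adjoint operators bounded above.

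For part (ii), I would verify the axioms of a Dirichlet form for $\widetilde{\mE}$ one by one, each reducing to the corresponding property of $\mE$ via $\psi$. Symmetry, bilinearity, and non-negativity are immediate. Closedness follows because the form-norm $\sqrt{\widetilde\mE(\tilde u)+\lVert\tilde u\rVert_{\mu}^2}$ agrees via $\psi$ with the form-norm of $\mE$, and $\psi$ is an isometry. For the Markov property, given $\tilde u=\psi u\in\widetilde{\mathcal{F}}$, the normal contraction $(\tilde u\vee 0)\wedge 1$ coincides $\mu$-a.e. with $\psi((u\vee 0)\wedge 1)$, and Markovianity of $\mE$ gives $(u\vee 0)\wedge 1\in\mathcal{F}$ with $\mE((u\vee 0)\wedge 1)\leq\mE(u)$, hence the same inequality transfers to $\widetilde{\mE}$. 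Finally, the association identity is a direct computation: for $\tilde u\in\psi(\mathcal{D}(\Delta_\mu^b))$ and $\tilde v\in\widetilde{\mathcal{F}}$,
\begin{align*}
\langle-\widetilde\Delta_{\mu}^b\tilde u,\tilde v\rangle_{\mu}=\langle-\Delta_{\mu}^b\psi^{-1}\tilde u,\psi^{-1}\tilde v\rangle_{\mu}=\mE(\psi^{-1}\tilde u,\psi^{-1}\tilde v)=\widetilde\mE(\tilde u,\tilde v).
\end{align*}
The Dirichlet case is identical with $\mathcal{F}_0$ in place of $\mathcal{F}$.

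The only subtlety, and the one place I would slow down, is the meaning of $\psi^{-1}$ on $\widetilde{\mathcal{F}}=\psi(\mathcal{F})$. A priori, an element of $L^2(F,\mu)$ admits many extensions to $L^2([0,1],\mu)$, but here $\mathcal{F}$ is by definition the space of $\mathcal{H}$-equivalence classes having a $\mathcal{D}_{\lambda^1}^1$-representative, and $\psi$ restricted to $\mathcal{F}$ is a bijection onto its image by construction. Thus $\psi^{-1}$ on $\widetilde{\mathcal{F}}$ is unambiguous, and the above arguments go through. I expect no further obstacles, since once unitarity of $\psi$ is in place every claim of the lemma is a formal consequence.
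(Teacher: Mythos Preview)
Your proposal is correct and follows essentially the same approach as the paper: both proofs rest on the observation that $\psi$ is a unitary isomorphism and then verify each property of $\widetilde\Delta_{\mu}^b$ and $\widetilde\mE$ by transporting the corresponding property of $\Delta_{\mu}^b$ and $\mE$ through $\psi$. The paper carries out each verification explicitly (density of the domain, symmetry, equality of domains of the adjoint, dissipativity, closedness via Cauchy sequences, Markov property, association), whereas you invoke the general principle of transport under unitary conjugation and sketch the same checks more concisely; the substance is the same. Your remark about the subtlety of $\psi^{-1}$ on $\widetilde{\mathcal{F}}$ is harmless but unnecessary: since $\psi$ is already a bijection on all of $L^2$, its inverse is globally well-defined and automatically carries $\widetilde{\mathcal{F}}=\psi(\mathcal{F})$ back to $\mathcal{F}$.
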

\begin{proof}
\begin{enumerate}[label=(\roman*)]
\item
First, we show that $\widetilde\Delta_{\mu}^b$ is self-adjoint.  We denote the inner product on $L^2(\supp(\mu),\mu)$ also by $\langle \cdot,\cdot\rangle_{\mu}$.
Since $\mathcal{D}\left(\Delta_{\mu}^b\right)$ is dense in $L^2([0,1],\mu)$, for any $u\in L^2([0,1],\mu)$ there exists a sequence $(u_n)_{n\in\mathbb{N}}$ with $u_n\in \mathcal{D}\left(\Delta_{\mu}^b\right),$  $n\in\mathbb{N}$ such that $\left\lVert u_n-u\right\rVert_{\mu}\to 0$ for  $n\to\infty$. From $\left\lVert u_n-u\right\rVert_{\mu}=\left\lVert \psi u_n-\widetilde u\right\rVert_{\mu}$ for all $n\in\mathbb{N}$ and $\psi u_n\in \mathcal{D}\left(\widetilde\Delta_{\mu}^b\right)=\psi\left(\mathcal{D}\left(\Delta_{\mu}^b\right)\right)$  the density of $\mathcal{D}\left(\widetilde \Delta_{\mu}^b\right)$ in $L^2(\supp(\mu),\mu)$ follows. Now, let $\widetilde u, \widetilde v\in\mathcal{D}\left(\widetilde\Delta_{\mu}^b\right)=\psi\left(\mathcal{D}\left(\Delta_{\mu}^b\right)\right)$, i.e. there exist unique $u,v\in \mathcal{D}\left(\Delta_{\mu}^b\right)$ such that $\widetilde u=\psi u,~\widetilde v=\psi v$. It is straight forward to check that $v\to\left\langle u,\Delta_{\mu}^bv\right\rangle_{\mu}$ is a linear continuous mapping on $\mathcal{D}\left(\Delta_{\mu}^b\right)$ if and only if $\widetilde v\to\left\langle \widetilde u,\widetilde \Delta_{\mu}^b\widetilde v\right\rangle_{\mu}$ is linear and continuous on $\mathcal{D}\left(\widetilde \Delta_{\mu}^b\right)$, which yields $\mathcal{D}\left(\widetilde \Delta_{\mu}^b\right)$=$\mathcal{D}\left(\left(\widetilde \Delta_{\mu}^b\right)^*\right)$.
Further, for all $\widetilde u, \widetilde v\in \mathcal{D}\left(\widetilde \Delta_{\mu}^b\right)$
\begin{align*}
\left\langle \widetilde\Delta_{\mu}^b\widetilde u,\widetilde v\right\rangle_{\mu}&=
\left\langle \psi\circ \Delta_{\mu}^b\circ\psi^{-1}\circ\psi u,\psi v \right\rangle_{\mu}\\
&= \left\langle \psi\circ \Delta_{\mu}^b u,\psi v \right\rangle_{\mu}\\
&= \left\langle \Delta_{\mu}^b  u, v \right\rangle_{\mu}\\
&= \left\langle  u, \Delta_{\mu}^b v \right\rangle_{\mu}\\
&= \left\langle \psi u, \psi\circ \Delta_{\mu}^b\circ\psi^{-1}\circ\psi v \right\rangle_{\mu}\\
&= \left\langle \widetilde u, \widetilde\Delta_{\mu}^b\widetilde v\right\rangle_{\mu}.
\end{align*}
The self-adjointness of $\Delta_{\mu}^b$ follows. For the dissipativity of $\widetilde \Delta_{\mu}^b$ we obtain from the dissipativity of $\Delta_{\mu}^b$
\begin{align*}
\left\langle \widetilde \Delta_{\mu}^b\widetilde u,\widetilde u\right\rangle_{\mu}=
\left\langle \Delta_{\mu}^b u, u\right\rangle_{\mu}\leq 0.
\end{align*}
The self-adjointness along with the dissipativity implies that $\widetilde \Delta_{\mu}^b$ generates a strongly continuous semigroup $\left(\widetilde T_t^b\right)_{t\geq 0}$ (see \cite[Theorem B.2.2]{KA}). It remains to show that eigenvalues and eigenfunctions of $\widetilde \Delta_{\mu}^b$ and $\Delta_{\mu}^b$ coincide. For that, let $\lambda<0$, $u\in\mathcal{D}\left(\Delta_{\mu}^b\right)$. The bijectivity of $\psi$ implies that $\left(\Delta_{\mu}^b-\lambda\right)u=0$ if and only if $\psi\left(\Delta_{\mu}^b-\lambda\right)u=0$. The results about eigenvalues and eigenfunctions follow.
\item Let $b=N$. Again, let let $\widetilde u, \widetilde v\in\mathcal{D}\left(\widetilde\Delta_{\mu}^N\right)= \psi\left(\mathcal{D}\left(\Delta_{\mu}^N\right)\right)$, i.e. there exist $u,v\in \mathcal{D}\left(\widetilde\Delta_{\mu}^b\right)$ with $\widetilde u=\psi u,~\widetilde v=\psi v$. The density of $\widetilde{\mathcal{F}}$ in $L^2(\supp(\mu),\mu)$ can be checked exactly like the density of $\mathcal{D}\left(\widetilde \Delta_{\mu}^N\right)$ in $\mathcal{H}$. Further, it is obvious that $\widetilde\mE$ defines a positive definite, symmetric bilinear form. Moreover, with $\alpha>0$ and $\widetilde\mE_{\alpha}\left(\widetilde u,\widetilde v\right)\coloneqq \widetilde\mE\left(\widetilde u,\widetilde v\right)+\alpha\left\langle\widetilde u, \widetilde v\right\rangle_{\mu}$, $\left(\widetilde F, \widetilde\mE_{\alpha}\right)$ is a Hilbert space. To verify this, note that $\widetilde\mE_{\alpha}\left(\widetilde u,\widetilde v\right)=\mE_{\alpha}(u,v)$, which implies that $\widetilde\mE_{\alpha}$ defines an inner product. Now, let $\widetilde u_n,~ n\in\mathbb{N}$ be a Cauchy sequence in $\widetilde{\mathcal{F}}$. Then, $u_n=\psi^{-1}\widetilde u_n,~ n\in\mathbb{N}$ is a Cauchy sequence in $\mathcal{F}$ with limit, say $u$. Since 
$\left\lVert \widetilde u_n-\psi u\right\rVert=\left\lVert u_n-u\right\rVert $ for all $n$, $\psi u$ is the limit of $\left(\widetilde u_n\right)_{n\in\mathbb{N}}$ in $\widetilde{\mathcal{F}}$. For the Markov property, we calculate \[\widetilde{\mE}\left(0\vee \widetilde u\wedge 1\right)=\mE\left(0\vee u\wedge 1\right)\leq \mE(u)=\widetilde{\mE}\left(\widetilde u\right).\]
To verify that $\widetilde{\Delta}_{\mu}^N$ is associated to $\widetilde\mE$, we apply the correspondence between $\Delta_{\mu}^N$ and $\mE$ to get for 
\begin{align*}
-\left\langle \widetilde{\Delta}_{\mu}^N\widetilde u,\widetilde v\right\rangle_{\mu}=
-\left\langle \Delta_{\mu}^N u, v\right\rangle_{\mu}
=\mE(u,v)=\widetilde \mE\left(\widetilde u,\widetilde v\right).
\end{align*}
The case $b=D$ works similarly.
\end{enumerate}
\end{proof}

\end{document}